\numberwithin{equation}{section}
\newtheorem{Theorem}[equation]{Theorem}
\newtheorem{Proposition}[equation]{Proposition}
\newtheorem{Hypothesis}[equation]{Hypothesis}
\newtheorem{Lemma}[equation]{Lemma}
\newtheorem{Corollary}[equation]{Corollary}
\newtheorem{Conjecture}[equation]{Conjecture}
\newtheorem{Question}[equation]{Question}
\theoremstyle{definition}
\newtheorem{Remark}[equation]{Remark}
\newtheorem{Caution}[equation]{Caution}
\newtheorem{eg}[equation]{Example}
\newtheorem{Definition}[equation]{Definition}
\renewcommand{\comment}[1]{}
\newcommand{\betacheck}{\beta^\vee}
\newcommand{\bB}{\mathbf{B}}
\newcommand{\bG}{\mathbf{G}}
\newcommand{\cO}{\mathcal{O}}
\newcommand{\cS}{\mathcal{S}}
\newcommand{\cT}{\mathcal{T}}
\newcommand{\RR}{\mathbb{R}}
\newcommand{\ZZ}{\mathbb{Z}}
\renewcommand{\phi}{\varphi}
\renewcommand{\emptyset}{\varnothing}
\newcommand{\eps}{\varepsilon}
\renewcommand{\tilde}[1]{\widetilde{#1}}
\def\Ddots{\mathinner{\mkern1mu\raise\p@
\vbox{\kern7\p@\hbox{.}}\mkern2mu
\raise4\p@\hbox{.}\mkern2mu\raise7\p@\hbox{.}\mkern1mu}}
\DeclareMathOperator{\im}{image}
\DeclareMathOperator{\sgn}{sgn}
\newcommand{\suchthat}{\mid}
\newcommand{\height}{\mathrm{ht}}
\newcommand{\Inv}{\mathrm{Inv}}
\newcommand{\WTits}{W_{\TitsCone}}
\newcommand{\TitsCone}{\cT}
\newcommand{\textif}{\text{ if }}
\newcommand{\textand}{\text{ }\mathrm{and }\text{ }}
\newcommand{\textor}{\text{ or }}
\newcommand{\Invplusplusx}{\Inv^{++}_x}
\newcommand{\tr}{\tilde{r}} 
\newcommand{\tn}{\tilde{n}}
\renewcommand{\emph}[1]{{\it #1}}
\renewcommand{\epsilon}{\eps}
\begin{document}

\title[On the double-affine Bruhat order]{On the double-affine Bruhat order: the $\epsilon=1$ conjecture and classification of covers in ADE type}
\author{Dinakar Muthiah}
\address{D.M.:
Department of Mathematical and Statistical Sciences,                 
University of Alberta, 
CAB 632, 
Edmonton, AB T6G 2G1,
Canada
}
\email{muthiah@ualberta.ca}

\author{Daniel Orr}
\address{D.O.:
Department of Mathematics, MC 0123,
460 McBryde Hall, Virginia Tech,
225 Stanger St., Blacksburg, VA 24061 USA}
\email{dorr@vt.edu}

%\date{\today}
%\tableofcontents

\begin{abstract}
For any Kac-Moody group $\bG$, we prove that the Bruhat order on the semidirect product of the Weyl group and the Tits cone for $\bG$ is strictly compatible with a $\ZZ$-valued length function. We conjecture in general and prove for $\bG$ of affine ADE type that the Bruhat order is graded by this length function. We also formulate and discuss conjectures relating the length function to intersections of ``double-affine Schubert varieties.''
\end{abstract}

\maketitle

\section{Introduction}

\subsection{The Bruhat order and previous work}

Let $\bG$ be a Kac-Moody group, let $W$ be its Weyl group, and let $\cT$ be the Tits cone of integral coweights. We can form the semi-direct product $\WTits = \cT \rtimes W$, which will in general be a semi-group. In \cite{BKP}, Braverman, Kazhdan, and Patnaik consider the case when $\bG$ is untwisted affine type, and they construct the Iwahori-Hecke algebra for the group $G = \bG(F)$ where $F$ is a non-archimedian local field. A key property of their construction is that they need to restrict attention to functions supported on a subsemigroup (the Cartan semigroup) $G^+ \subsetneq G$. Then they show that the Iwahori-double cosets on $G^+$ are exactly in bijection with the semi-group $\WTits$. 

Additionally, they define a preorder on $\WTits$ that we call the {\bf (double-affine) Bruhat order}; they conjecture that this order is in fact a partial order. In \cite{M}, the first-named author constructs a function
\begin{align}
  \label{eq:22}
  \ell_{\eps} : \WTits \rightarrow \ZZ \oplus \ZZ \eps
\end{align}
that is strictly compatible with the preorder, where $\ZZ \oplus \ZZ \eps$ is ordered lexicographically. As a corollary, he proves that the double-affine Bruhat order is partial order.
However, many questions remained open. In particular, because the intervals in $\ZZ \oplus \ZZ \eps$  are infinite in general, the results of~\cite{M} do not give strong finiteness results about the order $\WTits$.

\subsection{Setting $\eps=1$}

In \cite[Question 5.10]{M}, the question is asked whether the composed function (the {\bf length function})
\begin{align}
  \label{eq:23}
  \ell : \WTits \rightarrow \ZZ \oplus \ZZ \eps \rightarrow \ZZ,
\end{align}
obtained by setting $\epsilon=1$, is strictly compatible with the order on $\WTits$.

Let us briefly describe the ``single-affine'' situation (see Section \ref{subsec:terminology} below for the terminology), when $\bG$ is finite-type and simply-connected. In this case, $\WTits$ coincides with the usual notion of affine Weyl group, which by our simply-connected assumption is a Coxeter group. The order defined by Braverman, Kazhdan, and Patnaik exactly recovers the usual affine Bruhat order on this Coxeter group. Then, as explained in \cite{M}, the function $\ell: \WTits \rightarrow \ZZ$ exactly recovers the usual Coxeter length function; in particular, it is strictly compatible with the order.

In this paper, we give a positive answer to the above question. 
\begin{Theorem}[Theorem \ref{thm:compatibility-of-ell-with-order} below]
  For any Kac-Moody group $\bG$, the length function $\ell$ is strictly compatible with the Bruhat order on $\WTits$. 
\end{Theorem}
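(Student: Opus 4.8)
The plan is to reduce the statement to a single-step inequality and then to analyze the change in length root by root. Recall that the Bruhat order on $\WTits$ is generated, as a transitive relation, by the elementary relations $x \lessdot x r_\beta$, where $\beta$ ranges over positive real roots, $r_\beta$ is the associated reflection, and the convention is chosen so that $\ell_\eps$ increases along the step. Since $\ell$ takes values in the totally ordered set $(\ZZ, <)$, whose order is itself transitive, it suffices to prove that $\ell$ strictly increases along each such elementary step: concatenating a defining chain for any relation $x < y$ then yields $\ell(x) < \ell(y)$. Thus the entire theorem reduces to the single-step inequality $\ell(x r_\beta) - \ell(x) > 0$.

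Write $\ell_\eps = \ell^0 + \ell^1 \eps$ with $\ell^0, \ell^1 \colon \WTits \to \ZZ$, so that after setting $\eps = 1$ we have $\ell = \ell^0 + \ell^1$. The next step is to record the change along an elementary step as $\ell_\eps(x r_\beta) - \ell_\eps(x) = c + d\eps$, where, following \cite{M}, the integers $c$ and $d$ count with signs the roots whose sign relative to $x$ is flipped by $r_\beta$, sorted into the degree-$0$ and degree-$\eps$ pieces of the bigrading on the relevant (finite) inversion set $\inv{x}$, such as its doubly-positive truncation $\Invplusplusx$. The strict lexicographic compatibility of $\ell_\eps$ already proved in \cite{M} gives $(c,d) >_{\mathrm{lex}} (0,0)$, i.e.\ either $c > 0$, or $c = 0$ and $d > 0$. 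If $c = 0$ then $d > 0$ and the difference is $d > 0$; if $c > 0$ and $d \ge 0$ then the difference $c + d > 0$ as well. The entire difficulty is concentrated in the remaining case $c > 0$, $d < 0$, where a gain in degree-$0$ inversions is offset by a loss in degree-$\eps$ inversions and lexicographic positivity says nothing about the sign of the sum $c + d$.

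The hard part, and the heart of the argument, is therefore to bound the degree-$\eps$ loss by the degree-$0$ gain. I would do this by exhibiting the flipped roots explicitly: the roots counted negatively in $d$, those that leave the inversion set at the $\delta$-shifted level, should be matched \emph{injectively} with a proper subset of the roots counted positively in $c$, using the $\delta$-translation built into the action of $r_\beta$ on the double-affine root system, with the reflecting root $\beta$ itself lying outside the image of this injection and thus providing a strict surplus. Concretely, I expect to realize the net change as a strict containment of the relevant finite inversion sets, so that the total \emph{ungraded} count strictly increases; this is precisely the assertion $\ell(x r_\beta) - \ell(x) = c + d \ge 1$. Establishing that this matching is well defined and strict is exactly the step that goes beyond the lexicographic result of \cite{M}, and it is the main obstacle. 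Once the single-step inequality is secured, transitivity closes the argument.
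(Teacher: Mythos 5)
Your reduction to a single elementary step $x < x s_{\beta[n]}$ via transitivity is correct and is exactly how the paper begins. But from that point on your proposal is a plan, not a proof: the entire mathematical content of the theorem is the ``matching'' you describe, and you never construct it --- indeed you say yourself that establishing it ``is the main obstacle.'' What the paper actually proves is the stronger exact formula
\begin{align*}
\ell(x s_{\beta[n]}) - \ell(x) = \# \Invplusplusx(s_{\beta[n]}),
\end{align*}
where $\Invplusplusx(s_{\beta[n]}) = \{\gamma[m] \in \Inv(s_{\beta[n]}) \suchthat x(\gamma[m]) > 0 \textand x(-s_{\beta[n]}(\gamma[m])) > 0\}$ always contains $\beta[n]$; the proof constructs an explicit injection $\phi : \Inv(w^{-1}\pi^{-\mu}) \hookrightarrow \Inv(s_{\beta[n]}w^{-1}\pi^{-\mu})$ and a bijection $\psi$ of $\Invplusplusx(s_{\beta[n]})$ onto the complement of $\im\phi$, and separately proves that $\Invplusplusx(s_{\beta[n]})$ is finite (a nontrivial theorem using the Tits cone hypothesis $\mu,\, \mu + n w(\betacheck) \in \TitsCone$). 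Your sketch gestures at something of this shape but supplies none of it.

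Beyond incompleteness, two specific points in your setup would fail as stated. First, your description of $c$ (the degree-$0$ change) as a signed count of flipped roots is wrong: by definition $c = 2\height\bigl((\mu + n w(\betacheck))_+\bigr) - 2\height(\mu_+)$ is a difference of heights of dominant coweights, not a root count. Converting it into a count over inversion sets is precisely what requires the paper's machinery --- the length formula summing over $\Delta^+$, the finite truncations $\Inv_{\cS}$ of the (infinite) double-affine inversion sets, and the height formula $2\height(w(\betacheck)) = \sum_{\eta\in\cS}\langle w(\betacheck),\eta\rangle$, the Kac--Moody analogue of ``$2\rho$ is the sum of positive roots.'' You cannot cite this structure from \cite{M}; it is new in this paper. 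Second, your closing claim that a ``strict containment of the relevant finite inversion sets'' forces the ungraded count, and hence $\ell$, to strictly increase is unsound in this setting: the double-affine inversion sets are generally infinite, $\ell$ can be negative, and --- as the paper emphasizes --- elements of different lengths may have \emph{identical} inversion sets, so $\ell$ is not monotone in the inversion set. The inequality only emerges after both lengths are expressed as weighted sums over one common, carefully chosen finite truncation $\cS$ (closed under $|s_{w(\beta)}|$, containing $\Inv(w^{-1})$, $\Inv(s_{w(\beta)})$, the classical supports of $\im\psi$, and the roots pairing negatively with $\mu$ and $\mu + nw(\betacheck)$), which is where all the real work lies.
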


Even more interesting than this positive answer is our method of proof and the role that inversion sets play. To explain this, we briefly recall some of the ingredients for the definition of the Bruhat order on $\WTits$. Let $x \in \WTits$. To every positive real root $\beta$ for $\bG$ and every $n \in \ZZ$, there is an associated reflection element that we call $s_{\beta[n]}$. In general, $s_{\beta[n]} \notin \WTits$, but in the cases of interest, we will have $ x s_{\beta[n]} \in \WTits$. In this case, by definition, $x$ and $ x s_{\beta[n]}$ are comparable in the Bruhat order, and the order is generated by such relations. 

First we construct a set $\Inv^{++}_x(s_{\beta[n]})$, which is a certain subset of the inversion set of $s_{\beta[n]}$ ($\Inv^{++}_x(s_{\beta[n]})$ is defined in Section \ref{sec:length-functions-and-the-conjecture}) and we prove that $ x s_{\beta[n]} > x$ if and only if $\Inv^{++}_x(s_{\beta[n]}) \neq \emptyset$. Then, to prove our theorem we show that when $x s_{\beta[n]} > x$:
\begin{align}
  \label{eq:24}
 \ell(x s_{\beta[n]}) - \ell(x) = \# \Inv^{++}_x(s_{\beta[n]}). 
\end{align}
There is a similar statement when the inequality is reversed. To prove \eqref{eq:24}, we develop a  generalized notion of inversion set and we relate $\ell(x)$ with the inversion set of $x^{-1}$ (denoted $\Inv(x^{-1})$) and $\ell(x s_{\beta[n]})$ with the inversion set of $s_{\beta[n]}x^{-1}$ ($\Inv(x^{-1} s_{\beta[n]}$). However, this relationship is very subtle because these inversion sets are generally infinite and the function $\ell$ may take negative values. A particular manifestation of the subtlety of this relationship is that, unlike for Weyl groups, elements of different lengths may have identical inversion sets. Additionally, implicit in \eqref{eq:24} is the finiteness of $\Inv^{++}_x(s_{\beta[n]})$, which is not at all obvious. 

What we show is that there is a canonically defined injection from $\Inv(x^{-1})$ to $\Inv(x^{-1} s_{\beta[n]})$, and we then construct a bijection between $\Inv^{++}_x(s_{\beta[n]})$ and the complement of the image of this injection. We show that $\ell(x)$ can be computed by performing a weighted sum over certain finite subsets of $\Inv(x^{-1})$; similarly for $\ell(xs_{\beta[n]})$. Finally, we need an analogue of the finite-type fact that $2\rho$ is the sum of positive roots. Putting these various ingredients together we get \eqref{eq:24}.

\subsection{Classifying covers}

As a consequence, given $x,y \in \WTits$ with $x < y$, we know that a chain between $x$ and $y$ can have at most $\ell(y) - \ell(x)$ elements. A natural question is whether this bound is always acheived. Equivalently, we can ask whether covers are classified by the function $\ell$. Let us write $x \lhd y$ to denote that $y$ covers $x$ in the Bruhat order. Then we conjecture the following.

\begin{Conjecture}[Conjecture \ref{conj:covers}  below]
{\label{conj:covering-conjecture}}
 Let $x,y \in \WTits$, then $x \lhd y$ if and only if $x < y$ and $\ell(x) = \ell(y) - 1$.
\end{Conjecture}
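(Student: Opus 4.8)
The plan is to establish the stronger statement that the Bruhat order on $\WTits$ is graded by $\ell$, from which the stated cover characterization follows. One direction is immediate from the main theorem: if $x < y$ and $\ell(x) = \ell(y) - 1$, then strict compatibility gives $\ell(x) < \ell(z) < \ell(y)$ for any $z$ with $x < z < y$, and no integer lies strictly between $\ell(x)$ and $\ell(x)+1$; hence no such $z$ exists and $x \lhd y$. All of the content is therefore in the converse, that $x \lhd y$ forces $\ell(y) - \ell(x) = 1$.

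For the converse I would first reduce covers to single generating relations. Since the order is generated by the relations comparing $a$ and $a s_{\beta[n]}$, a strict inequality $x < y$ comes with a chain $x = a_0 < a_1 < \cdots < a_l = y$ of such relations; strict compatibility makes $\ell(a_0) < \ell(a_1) < \cdots < \ell(a_l)$, so the $a_i$ are distinct, and if $l \ge 2$ then $a_1$ satisfies $x < a_1 < y$, contradicting that $y$ covers $x$. Thus a cover is a single relation $y = x s_{\beta[n]}$, and by \eqref{eq:24} its length increment is $\#\Invplusplusx(s_{\beta[n]})$. The theorem now reduces to the key claim: if $x < x s_{\beta[n]} =: y$ with $\#\Invplusplusx(s_{\beta[n]}) \ge 2$, then $y$ does not cover $x$.

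I would prove this by establishing the chain property directly, inducting on $\ell(y) - \ell(x)$: it suffices to produce a cover $x \lhd z$ with $z \le y$, since then the interval $[z,y]$ has a smaller length gap and induction supplies a saturated chain to concatenate. The natural candidate is $z = x s_{\gamma[m]}$, where $\gamma[m]$ is a minimal element of the finite set $\Invplusplusx(s_{\beta[n]})$ with respect to the natural partial order on roots (by height). The two points to verify are (i) that a minimal $\gamma[m]$ satisfies $\#\Invplusplusx(s_{\gamma[m]}) = 1$, so that $x \lhd x s_{\gamma[m]}$ is genuinely a cover, and (ii) that $x s_{\gamma[m]} \le y$. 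Part (i) should follow from minimality together with the bijective description of $\Invplusplusx$ relative to the inversion sets $\Inv(x^{-1})$ and $\Inv(x^{-1} s_{\beta[n]})$ developed earlier.

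The main obstacle is part (ii), showing $x s_{\gamma[m]} \le x s_{\beta[n]}$. Unlike in the single-affine Coxeter case there are no simple reflections or reduced words to invoke, the ambient inversion sets are infinite, and the factorization of $s_{\beta[n]}$ does not realize $x s_{\gamma[m]} \le y$ as a single generating step once the length gap exceeds two. I would handle this by analyzing how $\Invplusplusx(s_{\gamma[m]})$ sits inside $\Invplusplusx(s_{\beta[n]})$ and the corresponding change in $\Inv(x^{-1})$, using the weighted-sum computation of $\ell$ and the analogue of the fact that $2\rho$ is the sum of the positive roots. It is precisely here that I expect to need the affine ADE hypothesis: the simply-laced structure keeps the real roots and their heights under control, guaranteeing finiteness of $\Invplusplusx$ and the existence of a minimal element whose reflection both covers $x$ and remains below $y$. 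This is also where I expect the argument to break down in general, consistent with the statement being only conjectural outside affine ADE type.
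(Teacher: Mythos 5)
Your setup is right and matches the paper: the direction ``$x<y$ and $\ell(y)=\ell(x)+1$ implies $x\lhd y$'' follows from strict compatibility (Theorem~\ref{thm:compatibility-of-ell-with-order}), covers must be single reflection steps $y=xs_{\beta[n]}$ since the order is generated by such relations, and by Theorem~\ref{thm-length-formula-for-sbeta-inversion-set} everything reduces to the claim that $\#\Inv^{++}_x(s_{\beta[n]})\geq 2$ forces $xs_{\beta[n]}$ not to be a cover of $x$. But from that point on you have a plan, not a proof, and the plan has gaps at exactly the places where the mathematical content lies. Your claim (i) --- that a ``height-minimal'' $\gamma[m]\in\Inv^{++}_x(s_{\beta[n]})$ satisfies $\#\Inv^{++}_x(s_{\gamma[m]})=1$ --- is never argued, and it is not even clearly well-posed: the paper defines height only for classical coroots, not for double-affine roots, and minimality of $\gamma[m]$ inside $\Inv^{++}_x(s_{\beta[n]})$ gives no visible control over the inversion set of the \emph{different} reflection $s_{\gamma[m]}$. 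Your claim (ii), that $xs_{\gamma[m]}\leq y$, you explicitly concede is an obstacle you do not overcome. Note also that your induction is unnecessary overhead: to refute a cover you need only exhibit one element strictly between $x$ and $y$ (gradedness then follows formally from the cover criterion together with strict compatibility and finiteness of chains), so insisting that $z$ be a cover of $x$ makes the task strictly harder than it needs to be. Finally, your remark that the ADE hypothesis is what guarantees finiteness of $\Inv^{++}_x$ is incorrect --- the paper proves that finiteness for arbitrary Kac-Moody $\bG$; ADE enters only through the pairing bound of Lemma~\ref{lem:pairings-in-finite-ade}.

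What the paper actually does for the hard direction is an explicit two-case construction of intermediate elements. If some $\theta[s,m]\in\Inv^{++}_x(s_{\beta[r,n]})$ has classical part $\theta\neq\beta$, then Lemma~\ref{lem:pairings-in-finite-ade} forces $\langle\theta[s,m],\beta[r,n]\rangle=1$, and one verifies directly that $x < xs_{\theta[s,m]} < xs_{\beta[r,n]}s_{\theta[s,m]} < xs_{\beta[r,n]}$ is a chain (Proposition~\ref{prop:not-cover}); the crucial computation, $s_{\theta[s,m]}(\beta[r,n]) = -s_{\beta[r,n]}(\theta[s,m])>0$, uses the pairing being exactly $1$. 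Otherwise every element of $\Inv^{++}_x(s_{\beta[r,n]})$ has classical part $\beta$ (the rank-one case), and the paper produces three-term chains by hand in the coordinates $\beta[s,m]$, with a case analysis on the sign $\sigma(r,n-1)$ and on whether $\beta[r,n-1]\in\Inv^{++}_x(s_{\beta[r,n]})$, including the slope-$(-\ell)$ analysis of Proposition~\ref{prop:inv-on-slope-of-neg-ell} (Proposition~\ref{prop:sl-two-cover-classification}). None of this --- the dichotomy on classical parts, the use of the pairing bound to build chains, or the rank-one computations --- appears in your proposal, so the essential direction of the statement remains unproved.
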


We note that in the finite and single-affine situations this conjecture is true because it is true for all Coxeter groups. In our double-affine situation those methods are not available. However, in untwisted affine type ADE, we have a positive result.

\begin{Theorem}[Theorem \ref{thm:ade-covering} below]
  Conjecture \ref{conj:covering-conjecture} is true for $\bG$ of untwisted affine ADE type.
\end{Theorem}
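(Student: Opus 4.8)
The plan is to treat the two implications separately and to confine the ADE hypothesis to a single combinatorial step. The implication ``$x<y$ and $\ell(x)=\ell(y)-1 \Rightarrow x\lhd y$'' holds for \emph{every} Kac--Moody group and is immediate from strict compatibility of $\ell$ (Theorem~\ref{thm:compatibility-of-ell-with-order}): if some $z$ satisfied $x<z<y$, then $\ell(x)<\ell(z)<\ell(y)$, forcing $\ell(y)-\ell(x)\ge 2$. Thus the entire content of Conjecture~\ref{conj:covering-conjecture} in this case is the converse, that a cover has length gap exactly $1$, and this is where simply-lacedness must enter.

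For the converse I would first reduce to a single reflection. Since the order is the transitive closure of the basic relations $x\lessgtr xs_{\beta[n]}$, a cover cannot be realized by an increasing chain with an interior vertex, so $x\lhd y$ forces $y=xs_{\beta[n]}$ for one reflection with $xs_{\beta[n]}>x$. By the length formula~\eqref{eq:24} we then have $\ell(y)-\ell(x)=\#\Invplusplusx(s_{\beta[n]})$, so the theorem reduces to the purely combinatorial claim: \emph{in affine ADE type, if $\#\Invplusplusx(s_{\beta[n]})\ge 2$ then the step $x<xs_{\beta[n]}$ is not a cover}, i.e.\ there is an intermediate $z$ with $x<z<xs_{\beta[n]}$. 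I would organize this as a chain (lifting) property proved by induction on the gap $k=\#\Invplusplusx(s_{\beta[n]})$, the base case $k=1$ being the easy direction above.

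To produce the intermediate I would work directly with the generalized inversion sets. Using the canonical injection $\Inv(x^{-1})\injects\Inv(x^{-1}s_{\beta[n]})$ from the proof of~\eqref{eq:24}, whose complement is in bijection with $\Invplusplusx(s_{\beta[n]})$, I would make $\Invplusplusx(s_{\beta[n]})$ explicit as a set of positive real affine roots $\gamma[m]$ lying between the basepoint and the wall of $s_{\beta[n]}$ (with $\beta[n]$ itself among them), order them by a suitable reflection ordering, and, when $k\ge 2$, peel off an extremal root $\gamma[m]$, setting $z=xs_{\gamma[m]}$. That $x\lhd z$ is a genuine cover would follow from $\Invplusplusx(s_{\gamma[m]})\ne\emptyset$ together with the length formula giving gap $1$; the delicate point is the comparability $z<xs_{\beta[n]}$, which is what feeds the induction and finally exhibits $z$ as the desired intermediate.

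This comparability is the main obstacle. Unlike in a Coxeter group, $z$ and $y=xs_{\beta[n]}$ do not differ by a single reflection, so there is no subword or strong-exchange property to invoke, and the inversion sets involved are infinite with $\ell$ possibly negative, ruling out naive counting. The ADE hypothesis is precisely what makes the root arithmetic tractable: because all real roots share a single length, the sums and differences of affine roots arising in $\Invplusplusx$ remain roots of the expected type, the peeled reflection stays inside $\WTits$, and the relevant local configurations reduce to rank-two subsystems of type $A_2$ or $A_1\times A_1$, where I can check by hand both that the extremal root yields a cover and that the residual configuration still dominates $z$ below $y$. Assembling these local checks into the inductive step, together with the finiteness of $\Invplusplusx$ already established in the proof of~\eqref{eq:24}, would complete the classification of covers.
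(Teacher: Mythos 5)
Your first three reductions match the paper exactly: the implication ``$x<y$ and $\ell(y)=\ell(x)+1 \Rightarrow x\lhd y$'' via Theorem~\ref{thm:compatibility-of-ell-with-order}, the observation that a cover forces $y=xs_{\beta[n]}$ for a single reflection, and the translation via Theorem~\ref{thm-length-formula-for-sbeta-inversion-set} of the remaining claim into ``$\#\Invplusplusx(s_{\beta[n]})\ge 2$ implies $xs_{\beta[n]}$ is not a cover of $x$.'' The genuine gap is in how you propose to discharge that claim. Your key structural assertion --- that simply-lacedness forces all relevant local configurations into rank-two subsystems of type $A_2$ or $A_1\times A_1$ --- fails in exactly the hardest case. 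Two elements of $\Invplusplusx(s_{\beta[n]})$ can have the \emph{same} classical part $\beta$, i.e.\ both of the form $\beta[s,m]$ and $\beta[s',m']$ in the paper's two-index notation for untwisted affine $\bG$. For such a pair the pairing is $\langle\beta[s,m],\beta[s',m']\rangle=\pm 2$, not in $\{-1,0,1\}$; the configuration is a double-affine rank-one (affine $A_1$) system, not $A_2$ or $A_1\times A_1$, and Lemma~\ref{lem:pairings-in-finite-ade} gives you nothing there. So your hand-check of the comparability $z<xs_{\beta[n]}$ does not apply, and this is precisely where the paper has to work hardest: after using the ADE pairing bound to dispose of the mixed case $\theta\neq\beta$ (Proposition~\ref{prop:not-cover}, whose explicit three-term chain $x \rightarrow xs_{\theta[s,m]} \rightarrow xs_{\beta[r,n]}s_{\theta[s,m]} \rightarrow xs_{\beta[r,n]}$ is essentially your rank-two check), it devotes an entire further section to the rank-one case (Proposition~\ref{prop:sl-two-cover-classification}), with a case analysis on the signs $\sigma(r,n-1)$, on whether $\beta[r,n-1]\in\Invplusplusx(s_{\beta[r,n]})$, and an argument that $\Invplusplusx(s_{\beta[r,n]})$ lies on a line of slope $-\ell$, before suitable chains can be exhibited.

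A secondary weak point: in your induction you claim the peeled extremal root $\gamma[m]$ gives a cover $x\lhd xs_{\gamma[m]}$ ``by the length formula giving gap $1$,'' but nothing in your setup guarantees $\#\Invplusplusx(s_{\gamma[m]})=1$ for an extremal element of a reflection ordering; this would itself need proof. It is also unnecessary: to rule out a cover you only need some $z$ with $x<z<xs_{\beta[n]}$, which is all the paper ever produces, so the inductive scaffolding can be dropped. As written, your proposal covers only the portion of the argument corresponding to Proposition~\ref{prop:not-cover} and leaves the rank-one case --- the bulk of the paper's proof --- unaddressed.
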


In untwisted affine ADE, using our explicit control of the pairings between roots and coroots, we reduce the problem to a calculation that is essentially the case of affine $SL_2$. Then we verify the theorem in this situation by explicit computations.

\subsection{Some remarks on further directions}

Let $\bG$ be an untwisted affine Kac-Moody group, and let $F = k((\pi))$ be the field of formal Laurent series over a field $k$. Let us take $G = \bG(F)$ , and let $G^+ \subsetneq G$ be the Cartan semigroup as in \cite{BKP}. Let $I \subset G$ be the Iwahori subgroup. Then we expect the quotient $G^+/I$ to be the $k$-points of the ``double-affine flag variety''. We expect the $I$-orbits on $G^+/I$ to be the ``double-affine Schubert cells''. By \cite{BKP}, we know that these $I$ orbits are in bijection with $\WTits$. So we expect the closure order on ``double-affine Schubert cells'' to be precisely the double-affine Bruhat order. Unfortunately, in this double-affine situation we do not know how to properly work with $G^+/I$ as object of algebraic geometry. So the statement about the closure order is currently only a heuristic (or perhaps a definition). Of course, in the single-affine case when $\bG$ is finite-type, all of the above has precise meaning and is well known.

In Section \ref{sec:further-questions} we discuss some conjectures motivated by the above heuristics as well as directions for further work. In particular, we write down the sets we expect to be the transverse slices to one double-affine Schubert variety embedded in another. Motivated by these transverse slices, we give a purely group-theoretic but conjectural definition of the double-affine Bruhat order.
We also discuss some further questions that are more combinatorial in nature.

\subsection{A remark about terminology}
\label{subsec:terminology}

We write $k$ to refer to fields we consider without any valuation, and we write $F$ to refer to a field with integer valuation and residue field $k$. We propose the following terminology.

We will use the adjective ``finite'' to refer to finite-type Kac-Moody groups over fields $k$ and objects constructed from them. For example: \emph{finite} roots, \emph{finite} Weyl groups, \emph{finite} flag varieties, \emph{finite} Hecke algebras, \emph{finite} Bruhat orders, etc.

We will use the the adjective ``single-affine'' to refer to affine-type Kac-Moody groups over fields $k$ or finite-type Kac-Moody groups over valued fields $F$ and the objects constructed from them. For example: \emph{single-affine} roots, \emph{single-affine} Weyl groups, \emph{single-affine} flag varieties, \emph{single-affine} Hecke algebras, \emph{single-affine} Bruhat orders, etc.

Finally, we will use the adjective ``double-affine'' to refer to affine-type Kac-Moody groups over valued fields $F$ and the objects constructed from them. For example: \emph{double-affine} roots, \emph{double-affine} Weyl groups, \emph{double-affine} flag varieties, \emph{double-affine} Hecke algebras, \emph{double-affine} Bruhat orders, etc.

% The benefit of this terminology is that currently the adjective ``affine'' is sometimes used ambiguously refer to either the single-affine or double-affine situations. Our proposal would remove that ambiguity when discussing both situations, as we will now. One minor flaw of this terminology is that the cases of non-affine infinite-type Kac Moody groups over $k$ (resp. $F$) are classified under the heading of ``single-affine'' (resp. ``double-affine''). However, because most of the current interest in infinite-type Kac-Moody groups is in affine type, we believe the benefits of the terminology outweigh the disadvantages currently. 

 \subsection{Acknowledgements}
 We would like to thank Manish Patnaik for many discussions and for carefully reading an early version of this manuscript. D.M. was supported by a PIMS postdoctoral fellowship. D.O. was supported by NSF grant DMS-1600653.
  
%   - Others?

\section{Notation}
\label{sec:notation}

\newcommand{\tDelta}{\tilde{\Delta}}

Let $\bG$ be a Kac-Moody group, $W$ its Weyl group, and $\Delta$ the set of real roots of $\bG$.
Define set of \emph{$\bG$-affine roots} as
\begin{align}
  \tDelta = \{\beta + n \pi \in \ZZ\Delta\oplus\ZZ\pi \suchthat \beta\in\Delta, n\in \ZZ\}.
\end{align}
(When $\bG$ is an affine Kac-Moody group, we will use the terminology ``double-affine'' synonymously with $\bG$-affine.) 

Let $\Delta^+\subset\Delta$ be the set of positive real roots of $\bG$. We call $\beta+n\pi\in\tDelta$ positive (and write $\beta+n\pi>0$) provided that $\beta\in\Delta^+$ and $n\ge 0$ or $\beta\in -\Delta^+$ and $n>0$; otherwise we call $\beta+n\pi$ negative (and write $\beta+n\pi<0$). Let $\tDelta^+$ be the subset of positive elements of $\tDelta$.

For any $\beta\in\Delta^+$ and $n\in\ZZ$, let us define
\begin{align}
  \beta[n] = \sgn(n)\cdot (\beta + n \pi) = \sgn(n)\beta + |n| \pi,
\end{align}
where $\sgn : \ZZ \rightarrow \{\pm 1\}$ is the signum function:
\begin{align}
  \label{eq:020}
  \sgn(n) =
  \begin{cases}
    + 1 & \textif n \geq 0 \\
    - 1 & \textif n < 0. 
  \end{cases}
\end{align}
Notice that $\beta[n]$ always belongs to $\tDelta^+$. Conversely, every element of $\tDelta^+$ is of this form for unique $\beta\in\Delta^+$ and $n\in\ZZ$. We refer to $\beta$ as the \emph{$\bG$-classical part} of $\beta[n]$. (When $\bG$ is an affine Kac-Moody group, we will use the terminology ``single-affine'' synonymously with $\bG$-classical.)

Let $P$ the \emph{coweight} lattice of $\bG$ and consider the semidirect product group $W_P=P\rtimes W$. We denote elements of $W_P$ by $\pi^\mu w$ where $\mu\in P$ and $w\in W$. The group $W_P$ acts on the set $\tDelta$ via the formula:
\begin{align}
  \pi^\mu w(\beta+n\pi) = w(\beta)+(n+\langle \mu, w(\beta)\rangle)\pi
\end{align}
where $\langle\cdot,\cdot\rangle$ is the canonical pairing between the coweight lattice and the root lattice of $\bG$. We define the reflection $s_{\beta[n]}$ corresponding to $\beta[n]\in\tDelta^+$ as the following element of $W_{P}$:
\begin{align}
  s_{\beta[n]} = \pi^{n \betacheck} s_\beta
\end{align}
where $\betacheck$ is the real coroot of $\bG$ associated with $\beta$.
The action of $s_{\beta[n]}$ on affine roots is then given by the usual formula
\begin{align}
  s_{\beta[n]}(\gamma[m]) = \gamma[m]- \sgn(m)\sgn(n)\langle\betacheck,\gamma\rangle\beta[n].
\end{align}

For any $x\in W_P$, we define the inversion set $\Inv(x)$ to be the subset of elements of $\tDelta^+$ made negative under the action of $x$. If $x \in W$, then this definition coincides with the usual definition of inversion set (via the injection $\Delta^+\hookrightarrow\tDelta^+,\beta\mapsto\beta[0]$).

Let $\TitsCone\subset P$ be the Tits cone in the coweight lattice of $\bG$. Our main object of study is $\WTits=\TitsCone\rtimes W$, which is a subsemigroup of $W_P$.

\section{The length functions $\ell_\epsilon$ and $\ell$}

\subsection{A height formula}

Let $\rho$ be the sum of fundamental weights of $\bG$ (which we have chosen once and for all). For any $\beta\in\Delta^+$, the height of the associated coroot $\betacheck$ is $\height(\betacheck) = \langle \betacheck,\rho \rangle$, which is independent of the choice of $\rho$.

For any $\beta\in\Delta^+$, we define $|s_\beta| : \Delta^+ \rightarrow \Delta^+$ via:
\begin{align}
  \label{eq:040}
  |s_\beta| (\gamma) = | s_\beta(\gamma) | = 
  \begin{cases}
    s_\beta(\gamma) & \textif s_\beta(\gamma) > 0 \\
    -s_\beta(\gamma) & \textif s_\beta(\gamma) < 0. 
  \end{cases}
\end{align}

\begin{Proposition}
  \label{prop-the-height-formula}
  Let $\beta\in\Delta^+$ and suppose that $\cS \subset \Delta^+$ is a finite subset containing $\Inv(s_\beta)$ and closed under $|s_\beta|$. Then
  \begin{align}
    2 \cdot \height(\betacheck) = \sum_{\gamma \in \cS} \langle \betacheck, \gamma \rangle
  \end{align}
\end{Proposition}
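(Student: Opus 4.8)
The plan is to exploit the fact that the operation $|s_\beta|$ is an involution of $\Delta^+$ and to symmetrize the sum over the $|s_\beta|$-action. First I would record the elementary structural facts: since $s_\beta$ permutes $\Delta$ and sends no root to $0$, the map $|s_\beta|$ is a well-defined self-map of $\Delta^+$, and it is an involution (for $\gamma\in\Delta^+$ one checks that $s_\beta(|s_\beta|(\gamma)) = \pm\gamma$ according to the sign of $s_\beta(\gamma)$, so applying $|s_\beta|$ twice returns $\gamma$). I would also note that $\Inv(s_\beta)$ is finite and stable under $|s_\beta|$: if $s_\beta(\gamma)<0$ then $|s_\beta|(\gamma) = -s_\beta(\gamma)$ and $s_\beta(-s_\beta(\gamma)) = -\gamma<0$, so $|s_\beta|(\gamma)\in\Inv(s_\beta)$. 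The hypotheses on $\cS$ (finite, containing $\Inv(s_\beta)$, and $|s_\beta|$-stable) are exactly what is needed to run a symmetrization argument and to identify the ``inversion part'' of $\cS$ with all of $\Inv(s_\beta)$.

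Second, I would carry out the symmetrization. Because $|s_\beta|$ permutes the finite set $\cS$, we have $\sum_{\gamma\in\cS}\langle\betacheck,\gamma\rangle = \sum_{\gamma\in\cS}\langle\betacheck,|s_\beta|(\gamma)\rangle$, so twice the desired sum equals $\sum_{\gamma\in\cS}\langle\betacheck,\gamma + |s_\beta|(\gamma)\rangle$. Using $s_\beta(\gamma) = \gamma - \langle\betacheck,\gamma\rangle\beta$ and $\langle\betacheck,\beta\rangle = 2$, a one-line computation shows that $\gamma + |s_\beta|(\gamma)$ equals $2\gamma - \langle\betacheck,\gamma\rangle\beta$ when $s_\beta(\gamma)>0$ (which pairs to $0$ against $\betacheck$) and equals $\langle\betacheck,\gamma\rangle\beta$ when $s_\beta(\gamma)<0$ (which pairs to $2\langle\betacheck,\gamma\rangle$). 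Since $\{\gamma\in\cS : s_\beta(\gamma)<0\} = \cS\cap\Inv(s_\beta) = \Inv(s_\beta)$ by the containment hypothesis, this collapses to $\sum_{\gamma\in\cS}\langle\betacheck,\gamma\rangle = \sum_{\gamma\in\Inv(s_\beta)}\langle\betacheck,\gamma\rangle$. In particular the sum is independent of the chosen $\cS$, and it remains only to evaluate it for $\cS = \Inv(s_\beta)$.

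Finally, I would compute the reduced sum by pulling $\betacheck$ out of the sum and invoking the classical identity $\sum_{\gamma\in\Inv(s_\beta)}\gamma = \rho - s_\beta(\rho)$ for the inversion set of a single Weyl group element (valid here since $s_\beta\in W$ has a finite inversion set of size $\ell(s_\beta)$). Then $\rho - s_\beta(\rho) = \langle\betacheck,\rho\rangle\beta = \height(\betacheck)\,\beta$, whence $\sum_{\gamma\in\Inv(s_\beta)}\langle\betacheck,\gamma\rangle = \height(\betacheck)\langle\betacheck,\beta\rangle = 2\,\height(\betacheck)$, as claimed. I expect the main obstacle to be purely bookkeeping in the Kac-Moody generality: confirming that $|s_\beta|$-stability is preserved among $\cS$, $\Inv(s_\beta)$, and their set differences, and citing the inversion-sum formula $\sum_{\gamma\in\Inv(s_\beta)}\gamma = \rho - s_\beta(\rho)$ in the correct sign convention for real roots. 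The conceptual heart is the symmetrization step, which simultaneously establishes independence from the choice of $\cS$ and isolates the contribution of $\Inv(s_\beta)$.
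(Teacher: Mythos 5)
Your proof is correct and follows essentially the same route as the paper: reduce the sum over $\cS$ to a sum over $\Inv(s_\beta)$ using the $|s_\beta|$-action (the paper cancels the $\gamma$-term against the $|s_\beta|(\gamma)$-term for $\gamma\notin\Inv(s_\beta)$, which is your symmetrization in a slightly different guise), then evaluate via the identity $\sum_{\gamma\in\Inv(w)}\gamma=\rho-w^{-1}(\rho)$. Your averaging over the involution is a marginally cleaner way to handle the cancellation (it absorbs fixed points of $s_\beta$ automatically), but it is not a genuinely different argument.
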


\begin{proof}
Let us consider those $\gamma \in \cS$ such that $\gamma \notin \Inv(s_\beta)$. For such $\gamma$, $|s_\beta(\gamma)| = s_\beta(\gamma)$ and $\langle \beta, |s_\beta|(\gamma)\rangle = - \langle \beta, \gamma \rangle$. So the $\gamma$-term cancels the $|s_\beta(\gamma)|$-term in the above sum whenever $\gamma \notin \Inv(s_\beta)$. Therefore:
\begin{align*}
\sum_{\gamma \in \cS} \langle  \betacheck, \gamma \rangle = \sum_{\gamma \in \Inv(s_\beta)} \langle  \betacheck, \gamma \rangle = \langle \betacheck,  \sum_{\gamma \in \Inv(s_\beta)} \gamma \rangle.
\end{align*}
 
By the definition of $\rho$, we can verify that (see, e.g., \cite[Exercise 3.12]{K}):
\begin{align}\label{E:rho-wrho}
\sum_{\gamma \in \Inv(w)} \gamma = \rho - w^{-1}(\rho)
\end{align}
for any $w\in W$.
So we have:
\begin{align*}
\sum_{\gamma \in \cS} \langle  \betacheck, \gamma \rangle = \langle \betacheck,  \rho - s_\beta(\rho) \rangle = \langle  \betacheck, \rho \rangle + \langle  \betacheck, \rho \rangle = 2 \cdot \height(\betacheck).
\end{align*}
\end{proof}

\newcommand{\floor}[1]{\left\lfloor #1 \right\rfloor}

More generally, for any $\mu\in P$, we define $2\cdot\height(\mu)=\floor{\langle \mu, 2\rho\rangle}$. Here $\floor{\cdot}$ denotes the ``floor'' function, i.e., for any $x \in \RR$, $\floor{x}$ is the unique integer such that $0 \leq x - \floor{x} < 1$.

%\begin{Remark}
We apply the floor function for the psychological benefit of allowing the length function defined in the next section to take on only integral values. However, we will see that the invariant quantities we construct, i.e., independent of choice of $\rho$, will be the {\it differences} of lengths of elements that are comparable in the Bruhat order. These quantities will always be integral even if we allow the length to take on rational values. We note that in untwisted affine cases, $\rho$ can be chosen so that $\langle \mu, 2\rho\rangle \in \ZZ$ for all $\mu \in P$. For simplicity of notation, we shall assume below that $\langle \mu, 2\rho\rangle \in \ZZ$ and omit the $\floor{\cdot}$, which can easily be added to the arguments below. 
%\end{Remark}

% . where we make the following assumption
% \begin{align}
% \label{E:assumption-mu-2rho}
% \langle\mu,2\rho\rangle \in\ZZ,\qquad \forall \mu\in P.
% \end{align}
% It is known that \eqref{E:assumption-mu-2rho} holds for $\bG$ of finite type or simply-connected affine type. 

% {\tt\color{red} *** Dan: check/extend this remark ***}

\subsection{The length functions $\ell_\eps$}
\label{sec:length-functions-and-the-conjecture}

Let us recall the definition of the length function $\ell_\epsilon : \WTits \rightarrow \ZZ \oplus \ZZ \epsilon$ from \cite{M}. For $\pi^\mu w \in \WTits$, we define
\begin{align}
  \ell_\epsilon (\pi^\mu w) = 2 \cdot \height(\mu_+) + \epsilon \cdot \left( \# \{ \gamma \in \Inv(w^{-1}) \suchthat \langle \mu,\gamma \rangle \geq 0 \} - \# \{ \gamma \in \Inv(w^{-1}) \suchthat \langle \mu,\gamma \rangle < 0 \} \right)
\end{align}
where $\mu_+$ is the unique dominant translate of $\mu$ under the action $W$.

Let $\beta[n]\in\tDelta^+$ be a $\bG$-affine root such that $\pi^\mu w s_{\beta[n]} \in \WTits$. If $\pi^\mu w(\beta[n]) > 0$, then we declare that $\pi^\mu w s_{\beta[n]} > \pi^\mu w$ in the Bruhat preorder. In \cite{M}, it is proved that $\pi^\mu w(\beta[n]) > 0$ is equivalent to $\ell_\epsilon(\pi^\mu w s_{\beta[n]}) > \ell_\epsilon(\pi^\mu w) $, where $\ZZ \oplus \ZZ \epsilon$ is ordered lexicographically. So the map $\ell_\epsilon$ is strictly compatible with the order structure. In particular, this implies that the Bruhat preorder is in fact a partial order. However, because intervals in $\ZZ \oplus \ZZ \epsilon$ are not finite in general, one does not obtain any strong finiteness results.

\subsection{Setting $\epsilon = 1$}

Let us define $\ell : \WTits \rightarrow \ZZ$ by composing $\ell_\epsilon$ with the map $\ZZ \oplus \ZZ \epsilon \rightarrow \ZZ$ given by sending $\epsilon$ to $1$. We will prove the following, which was conjectured in \cite{M}:
\begin{Theorem}{\label{thm:compatibility-of-ell-with-order}}
  The map $\ell$ is strictly compatible with the Bruhat order on $\WTits$ and the usual order on $\ZZ$. That is, if $x,y \in \WTits$, $x \leq y$ and $x \neq y$, then $\ell(x) < \ell(y)$.
\end{Theorem}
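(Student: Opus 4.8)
The plan is to reduce the global statement to the single-cover case. Since the Bruhat order is generated by the cover relations $x s_{\beta[n]} > x$ (for $\beta[n] \in \tDelta^+$ with $x s_{\beta[n]} \in \WTits$ and $\pi^\mu w(\beta[n]) > 0$ where $x = \pi^\mu w$), and since $x \le y$ with $x \neq y$ means there is a chain $x = x_0 < x_1 < \cdots < x_k = y$ of such covers, it suffices to show that each elementary step \emph{strictly increases} $\ell$. Thus I would aim directly at the key equation~\eqref{eq:24}: whenever $x s_{\beta[n]} > x$, one has $\ell(x s_{\beta[n]}) - \ell(x) = \# \Invplusplusx(s_{\beta[n]})$, a strictly positive integer (it is positive precisely because $\Invplusplusx(s_{\beta[n]}) \neq \emptyset$ is equivalent to $\pi^\mu w(\beta[n]) > 0$). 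Summing over the chain then gives $\ell(y) - \ell(x) \ge k \ge 1 > 0$, which is exactly strict compatibility.

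For the single-step computation, write $x = \pi^\mu w$ and unwind the definition of $\ell$. Setting $\eps = 1$ collapses $\ell_\eps$ to
\begin{align*}
  \ell(\pi^\mu w) = 2\cdot\height(\mu_+) + \#\{\gamma \in \Inv(w^{-1}) \suchthat \langle \mu, \gamma\rangle \ge 0\} - \#\{\gamma \in \Inv(w^{-1}) \suchthat \langle \mu, \gamma\rangle < 0\}.
\end{align*}
I would first obtain a clean closed form for $\ell(x)$ in terms of the inversion set $\Inv(x^{-1}) \subset \tDelta^+$: the translation part $2\cdot\height(\mu_+) = \langle \mu_+, 2\rho\rangle$ should be re-expressed, via Proposition~\ref{prop-the-height-formula} and its generalization to $\mu \in P$, as a weighted sum $\sum \langle \betacheck, \cdot\rangle$ over a suitable finite $|s_\beta|$-closed subset of $\Delta^+$, and the $\Inv(w^{-1})$ correction term should be absorbed so that $\ell(x)$ becomes a single \emph{weighted sum over finite pieces of} $\Inv(x^{-1})$. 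The same formula applies to $x s_{\beta[n]}$ with $\Inv(x^{-1})$ replaced by $\Inv(s_{\beta[n]} x^{-1}) = \Inv((x s_{\beta[n]})^{-1})$. Then I would construct the canonical injection $\Inv(x^{-1}) \hookrightarrow \Inv(s_{\beta[n]} x^{-1})$ (roughly, apply $s_{\beta[n]}$ to inversions not already accounting for $\beta[n]$), verify it is well-defined and injective, and identify the complement of its image bijectively with $\Invplusplusx(s_{\beta[n]})$. Taking the difference of the two weighted sums, all terms matched by the injection should cancel, leaving a contribution indexed exactly by $\Invplusplusx(s_{\beta[n]})$; the height-formula identity ($2\rho$ as a sum of positive roots) is what guarantees that each leftover term contributes $+1$ rather than some other weight, yielding $\#\Invplusplusx(s_{\beta[n]})$.

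The main obstacle, as the introduction itself flags, is that these inversion sets $\Inv(x^{-1})$ and $\Inv(s_{\beta[n]} x^{-1})$ are \emph{infinite}, so neither the injection nor the weighted sums can be manipulated termwise in a naive way; one cannot simply ``subtract'' two divergent sums. The delicate point is to make sense of $\ell(x)$ as a finite quantity extracted from an infinite inversion set — presumably by truncating to the finitely many $\gamma \in \tDelta^+$ whose $\bG$-classical part lies in a controlled finite set and for which the pairing with $\mu$ is relevant — and to check that the injection respects this truncation so that the cancellation is genuinely finite. I expect the heart of the argument to be establishing this bookkeeping: proving finiteness of $\Invplusplusx(s_{\beta[n]})$, showing the injection's image-complement maps bijectively onto it, and confirming the residual weighted terms sum to the cardinality. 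Once~\eqref{eq:24} is established (together with the reversed-inequality analogue for the case $x s_{\beta[n]} < x$), strict compatibility follows immediately by the chain argument above, since a nonempty $\Invplusplusx(s_{\beta[n]})$ forces a strictly positive integer increment at every cover.
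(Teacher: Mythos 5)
Your proposal is correct and follows essentially the same route as the paper: the paper also reduces strict compatibility to the single-step identity $\ell(x s_{\beta[n]}) - \ell(x) = \# \Invplusplusx(s_{\beta[n]})$ (its Theorem \ref{thm-length-formula-for-sbeta-inversion-set}, with $\beta[n] \in \Invplusplusx(s_{\beta[n]})$ giving positivity), and proves that identity exactly as you outline — via the injection $\phi: \Inv(w^{-1}\pi^{-\mu}) \hookrightarrow \Inv(s_{\beta[n]}w^{-1}\pi^{-\mu})$, the bijection $\psi$ of $\Invplusplusx(s_{\beta[n]})$ onto the complement of $\im(\phi)$, truncation of the infinite inversion sets to a finite $|s_{w(\beta)}|$-closed set $\cS$ of classical parts, and the height formula of Proposition \ref{prop-the-height-formula}. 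Your flagged obstacle (finiteness of $\Invplusplusx(s_{\beta[n]})$ and making the cancellation of infinite sums finite) is precisely what the paper's Section 5 bookkeeping handles.
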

Because $\ZZ$ does have finiteness of intervals, this implies that all chains between two fixed elements of $\WTits$ must be finite and gives an explicit bound. In fact, we will prove the following stronger statement.
\begin{Theorem}
  \label{thm-length-formula-for-sbeta-inversion-set}
 Let $\pi^\mu w \in \WTits$ and $\beta[n]\in\tDelta^+$. Suppose $\pi^\mu w s_{\beta[n]} > \pi^\mu w$. Then:
  \begin{align}
 \ell(\pi^\mu w s_{\beta[n]}) =  \ell(\pi^\mu w) + \# \{ \gamma[m] \in \Inv(s_{\beta[n]}) \suchthat \pi^\mu w( \gamma[m]) > 0 \textand \pi^\mu w( -s_{\beta[n]}(\gamma[m])) > 0 \}.    
  \end{align}
  In particular, the set $\{ \gamma[m] \in \Inv(s_{\beta[n]}) \suchthat \pi^\mu w( \gamma[m]) > 0 \textand \pi^\mu w( -s_{\beta[n]}(\gamma[m])) > 0 \}   $ is finite. 
\end{Theorem}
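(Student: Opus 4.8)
The plan is to prove the stated formula by comparing the inversion sets of $x^{-1}$ and $(x s_{\beta[n]})^{-1}$, where I write $x=\pi^\mu w$. First I would record $x s_{\beta[n]} = \pi^{\mu + n w(\betacheck)}\, w s_\beta$, so that both $x$ and $x s_{\beta[n]}$ are explicitly of the form $\pi^{(\cdot)}(\cdot)$, and note $(x s_{\beta[n]})^{-1} = s_{\beta[n]} x^{-1}$ since $s_{\beta[n]}$ is an involution. Using $x(-s_{\beta[n]}\gamma[m])>0 \iff x s_{\beta[n]}\gamma[m]<0$, the set in the theorem is
\[
\Inv^{++}_x(s_{\beta[n]}) = \{\gamma[m]\in\Inv(s_{\beta[n]}) \suchthat x\gamma[m]>0 \textand x s_{\beta[n]}\gamma[m]<0\},
\]
so the whole theorem becomes the assertion that this set is finite with cardinality $\ell(x s_{\beta[n]}) - \ell(x)$.

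Next I would carry out the combinatorial comparison. Splitting $\tDelta^+$ according to the signs of $x^{-1}(-)$ and $s_{\beta[n]} x^{-1}(-)$ yields
\[
\Inv(x^{-1}) = P_1 \sqcup P_2, \qquad \Inv(s_{\beta[n]} x^{-1}) = P_1 \sqcup Q_2,
\]
where $P_1$ is the locus on which both are negative, and $\delta\mapsto x^{-1}\delta$ identifies $P_2 \cong \Inv(s_{\beta[n]})\cap\Inv(x)$ and $Q_2\cong\Inv(s_{\beta[n]})\smallsetminus\Inv(x)$. The involution $\psi(\gamma[m]) = -s_{\beta[n]}(\gamma[m])$ preserves $\Inv(s_{\beta[n]})$, and I would use it to define a canonical injection $\iota\colon\Inv(x^{-1})\injects\Inv(s_{\beta[n]} x^{-1})$ equal to the identity on $P_1$ and to $\psi$ (transported through $x$) on $P_2\to Q_2$. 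The one point that makes $\iota$ land in $Q_2$ with the correct image is the vanishing of $\{\gamma\in\Inv(s_{\beta[n]})\suchthat x\gamma<0 \textand x s_{\beta[n]}\gamma>0\}$: writing $s_{\beta[n]}\gamma = \gamma - c\,\beta[n]$, membership in $\Inv(s_{\beta[n]})$ together with $\beta[n]>0$ forces $c\ge 1$, so if $x s_{\beta[n]}\gamma>0$ then $x\gamma = x s_{\beta[n]}\gamma + c\cdot x(\beta[n])$ is a non-negative combination of positive $\bG$-affine roots, hence positive by the sign rule on $\tDelta^+$ — contradicting $x\gamma<0$ and using precisely the hypothesis $x(\beta[n])>0$. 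With this, $\psi$ matches $P_2$ bijectively onto $Q_2\smallsetminus\Inv^{++}_x(s_{\beta[n]})$, so the complement of the image of $\iota$ is exactly $\Inv^{++}_x(s_{\beta[n]})$.

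The remaining, more delicate task is to turn this set-level comparison into the length identity, since $\Inv(x^{-1})$ and $\Inv(s_{\beta[n]} x^{-1})$ are both infinite while $\ell$ is finite and may be negative. The bridge I would use is that $\ell$ is the regularized inversion count: organizing $\Inv(x^{-1})$ by classical direction $\pm\gamma$ shows that the $\pm\gamma$-strings contribute $\lvert\langle\mu,\gamma\rangle\rvert$ when $\gamma\notin\Inv(w^{-1})$ and $\lvert\langle\mu,\gamma\rangle+1\rvert$ when $\gamma\in\Inv(w^{-1})$, and summing these reproduces $\ell(\pi^\mu w) = 2\cdot\height(\mu_+) + \#\{\gamma\in\Inv(w^{-1})\suchthat\langle\mu,\gamma\rangle\ge 0\} - \#\{\gamma\in\Inv(w^{-1})\suchthat\langle\mu,\gamma\rangle<0\}$ once the divergent total $\sum_{\gamma\in\Delta^+}\lvert\langle\mu,\gamma\rangle\rvert$ is read as its finite value $2\cdot\height(\mu_+)$. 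Forming the difference $\ell(x s_{\beta[n]}) - \ell(x)$, the common part $P_1$ cancels and the injection identifies the $P_2$- and $Q_2$-contributions up to the set $\Inv^{++}_x(s_{\beta[n]})$; the divergent discrepancy between the two regularizations is exactly the change from $2\cdot\height(\mu_+)$ to $2\cdot\height((\mu + n w(\betacheck))_+)$, which I would evaluate with Proposition \ref{prop-the-height-formula} (the substitute for ``$2\rho$ is the sum of positive roots'') applied to the coroot $\betacheck$, since $\mu$ and $\mu + n w(\betacheck)$ differ by a multiple of $w(\betacheck)$. This yields $\ell(x s_{\beta[n]}) - \ell(x) = \#\Inv^{++}_x(s_{\beta[n]})$ and, since the left side is a finite integer, the finiteness claim as a byproduct.

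I expect the main obstacle to be this last step: making the regularized-cardinality comparison rigorous so that the infinite common part $P_1$ genuinely cancels and the height terms are matched against the inversion count via Proposition \ref{prop-the-height-formula}. The subtleties flagged in the introduction — that the inversion sets are infinite, that $\ell$ may be negative, and that distinct elements can share an inversion set — mean one cannot argue ``$\ell = \#\Inv$'' directly; the bookkeeping of which finite subsets of $\Inv(x^{-1})$ and $\Inv(s_{\beta[n]} x^{-1})$ carry the difference, together with the simultaneous control of the dominant translates $\mu_+$ and $(\mu + n w(\betacheck))_+$, is where the real work lies. By contrast, the sign analysis underlying $\iota$, and in particular the vanishing that consumes the hypothesis $x(\beta[n])>0$, is comparatively clean and should be handled first, as it is what identifies the complement with $\Inv^{++}_x(s_{\beta[n]})$.
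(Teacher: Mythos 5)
Your plan follows the paper's proof in all essentials. Your injection $\iota$ is exactly the paper's map $\phi$ (the identity where $s_{\beta[n]}x^{-1}$ is already negative, the conjugated reflection $x s_{\beta[n]} x^{-1}$ elsewhere); your identification of the complement of $\im \iota$ with (the image under $x$ of) $\Inv^{++}_x(s_{\beta[n]})$ is the paper's map $\psi$ together with its Corollary $\Inv(s_{\beta[n]} w^{-1}\pi^{-\mu}) = \im \phi \sqcup \im \psi$; and your ``regularized inversion count'' is precisely the paper's mechanism: restrict to strings over a finite set $\cS$ of classical directions, use the explicit string counts ($|\langle\mu,\eta\rangle|$ plus a $\pm 1$ correction on $\Inv(w^{-1})$), and absorb the divergent part into the height via Proposition \ref{prop-the-height-formula} applied in the direction $w(\betacheck)$. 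One genuinely nice local difference: your proof that $\{\gamma[m] \in \Inv(s_{\beta[n]}) \suchthat x(\gamma[m])<0 \textand x s_{\beta[n]}(\gamma[m])>0\}$ is empty --- writing $x(\gamma[m]) = x s_{\beta[n]}(\gamma[m]) + c\, x(\beta[n])$ with $c\geq 1$ and using that a nonnegative combination of positive $\bG$-affine roots is positive --- is a clean and correct substitute for the pairing-sign computation in Proposition \ref{prop-phi-is-injection}, and it makes transparent exactly where the hypothesis $x(\beta[n])>0$ enters.

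The one place where you deviate, and where your proposal as written has a gap, is the finiteness of $\Inv^{++}_x(s_{\beta[n]})$. The paper does not obtain finiteness as a byproduct: it proves finiteness \emph{first}, as a standalone theorem (the Tits-cone argument involving $\nu = s_{w(\beta)}(\mu + n w(\betacheck))$), and then \emph{uses} it to know that the truncation set $\cS$ can be chosen finite --- the paper's third condition on $\cS$ is that $\cS$ contain every classical direction appearing in $\im(\psi)$, which can hold for a finite $\cS$ only if those directions are finitely many. If your regularization is set up the same way, then concluding ``the left side is a finite integer, hence the set is finite'' is circular. The repair, which your outline can accommodate, is to prove the truncated identity $\ell(x s_{\beta[n]}) - \ell(x) = \#\left(\im \psi \cap \Inv_{\cS}(s_{\beta[n]} w^{-1}\pi^{-\mu})\right)$ for \emph{every} finite $\cS$ that is $|s_{w(\beta)}|$-stable, contains $\Inv(w^{-1})$ and $\Inv(s_{w(\beta)})$, and contains the (finitely many, by the Tits-cone hypotheses) $\eta$ with $\langle\mu,\eta\rangle<0$ or $\langle\mu + n w(\betacheck),\eta\rangle<0$. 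These conditions make no reference to $\im\psi$, and $|s_{w(\beta)}|$-stability alone guarantees that $\phi$ matches the $\cS$-strings of the two inversion sets against each other, so the count on the right is independent of the choice of such $\cS$. Since each classical string meets $\Inv(s_{\beta[n]} w^{-1}\pi^{-\mu})$ in finitely many elements, an infinite $\im\psi$ would involve infinitely many classical directions, and enlarging $\cS$ to include more and more of them would make the count unbounded --- contradicting its constancy. With that rearrangement your argument is complete, and it even bypasses the paper's separate finiteness theorem; but this step must be spelled out, as it is exactly the point your sketch elides.
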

For brevity, let us define
\begin{align}
\Inv^{++}_{\pi^\mu w }(s_{\beta[n]}) = \{ \gamma[m] \in \Inv(s_{\beta[n]}) \suchthat \pi^\mu w( \gamma[m]) > 0 \textand \pi^\mu w( -s_{\beta[n]}(\gamma[m])) > 0 \}.
\end{align}
Then $\Inv^{++}_{\pi^\mu w}(s_{\beta[n]})$ contains at least one element, namely $\beta[n]$. Thus Theorem \ref{thm-length-formula-for-sbeta-inversion-set} implies Theorem~\ref{thm:compatibility-of-ell-with-order}.

\subsection{Some explicit formulas for length}

\begin{Proposition}
   \label{prop-height-formula}
For any $\mu\in\TitsCone$,
  \begin{align}
    \label{eq:15}
    \ell(\pi^\mu) = 2 \cdot  \height(\mu) - \sum_{\gamma\in\Delta^+: \langle \mu, \gamma \rangle < 0} \langle \mu, 2\gamma \rangle.
  \end{align}
\end{Proposition}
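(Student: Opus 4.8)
The plan is to reduce the statement to the defining formula for $\ell_\epsilon$ and then carry out a purely finite-type computation comparing $\mu$ with its dominant translate $\mu_+$. First I would observe that for the element $\pi^\mu$ we have $w = 1$, so $\Inv(w^{-1}) = \Inv(1) = \emptyset$ and the $\epsilon$-term in the definition of $\ell_\epsilon(\pi^\mu w)$ vanishes identically. Consequently $\ell(\pi^\mu) = 2\cdot\height(\mu_+) = \langle\mu_+, 2\rho\rangle$, and the proposition becomes the identity
\[
\langle\mu_+, 2\rho\rangle = \langle\mu, 2\rho\rangle - \sum_{\gamma\in\Delta^+:\, \langle\mu,\gamma\rangle<0}\langle\mu,2\gamma\rangle.
\]
Since $\mu$ lies in the Tits cone $\TitsCone$, it has a unique dominant representative $\mu_+$ in its $W$-orbit; I fix $v\in W$ with $\mu = v\mu_+$.

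Next, using $W$-invariance of the pairing together with the identity \eqref{E:rho-wrho} applied to $v$, I would write $v^{-1}\rho = \rho - \sum_{\gamma\in\Inv(v)}\gamma$, so that
\[
\langle\mu, 2\rho\rangle = \langle\mu_+, 2v^{-1}\rho\rangle = \langle\mu_+,2\rho\rangle - 2\sum_{\gamma\in\Inv(v)}\langle\mu_+,\gamma\rangle .
\]
Rearranging isolates $\langle\mu_+,2\rho\rangle$ in terms of $\langle\mu,2\rho\rangle$ and the sum over $\Inv(v)$. To match the stated right-hand side, I would then apply the bijection $\gamma\mapsto -v\gamma$, which carries $\Inv(v)$ onto $\Inv(v^{-1})$ and satisfies $\langle\mu_+,\gamma\rangle = \langle\mu, v\gamma\rangle = -\langle\mu,-v\gamma\rangle$; this rewrites $\sum_{\gamma\in\Inv(v)}\langle\mu_+,\gamma\rangle = -\sum_{\delta\in\Inv(v^{-1})}\langle\mu,\delta\rangle$.

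The final and most delicate step is to identify $\sum_{\delta\in\Inv(v^{-1})}\langle\mu,\delta\rangle$ with the sum over $\{\delta\in\Delta^+ : \langle\mu,\delta\rangle<0\}$ appearing in the statement. Using dominance of $\mu_+$ and $\langle\mu,\delta\rangle = \langle\mu_+, v^{-1}\delta\rangle$, one checks that every $\delta\in\Delta^+$ with $\langle\mu,\delta\rangle<0$ lies in $\Inv(v^{-1})$, while every $\delta\in\Inv(v^{-1})$ satisfies $\langle\mu,\delta\rangle\le 0$. I expect the stabilizer subtlety here to be the main point requiring care: the two sets need not coincide, since when $\mu_+$ fails to be strictly dominant $\Inv(v^{-1})$ can contain extra roots $\delta$ with $\langle\mu,\delta\rangle = 0$. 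The resolution is that such roots contribute zero to the weighted sum, so the two sums agree; this also yields for free the finiteness of $\{\delta\in\Delta^+:\langle\mu,\delta\rangle<0\}$, as it is contained in the finite inversion set $\Inv(v^{-1})$. Assembling the three displays gives the claimed formula.
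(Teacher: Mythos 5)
Your proof is correct and follows essentially the same route as the paper: reduce to $\ell(\pi^\mu)=\langle\mu_+,2\rho\rangle$, apply \eqref{E:rho-wrho}, and identify the resulting inversion-set sum with the sum over roots pairing strictly negatively with $\mu$ using dominance of $\mu_+$, the zero-pairing roots contributing nothing. The only difference is cosmetic: the paper applies \eqref{E:rho-wrho} to the element $w$ satisfying $w(\mu)=\mu_+$ and pairs directly against $\mu$, which avoids your extra bijection step $\Inv(v)\to\Inv(v^{-1})$, $\gamma\mapsto -v\gamma$ (note your $\Inv(v^{-1})$ is exactly the paper's $\Inv(w)$, so the final identification is identical).
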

\begin{proof}
Because $\mu$ is in the Tits cone, there is some $w \in W$ such that $w(\mu)$ is dominant. Then, by definition, $\ell(\pi^\mu) = \langle w(\mu), 2\rho \rangle = \langle \mu, 2w^{-1}(\rho) \rangle $. Using \eqref{E:rho-wrho}, we obtain
\begin{align}
2\cdot\height(\mu) - \ell(\pi^\mu) = 2\langle \mu,\rho-w^{-1}(\rho)\rangle = \sum_{\eta\in\Inv(w)} \langle \mu, 2\eta\rangle
\end{align}

Suppose $\eta \in \Inv(w)$. Then $\langle \mu, \eta \rangle = \langle w(\mu), w(\eta) \rangle \leq 0$ because $w(\mu)$ is dominant and $w(\eta)$ is negative. Conversely, suppose $\gamma\in\Delta^+$ and $\langle \mu, \gamma \rangle < 0$. Then $\langle w(\mu), w(\gamma) \rangle < 0$, which implies that $w(\gamma)$ is negative. Therefore:
\begin{align}
  \label{eq:18}
  \sum_{\eta \in \Inv(w)} \langle \mu, 2 \eta \rangle = \sum_{\gamma\in\Delta^+ : \langle \mu, \gamma \rangle < 0} \langle \mu, 2\gamma \rangle
\end{align}
Note that the set over which we are summing on the right is a subset of that on the left, but the complement contributes zero to the sum.
\end{proof}

From the definition of $\ell$ and Proposition \ref{prop-height-formula}, we immediately obtain the following formula for the length of an arbitrary element $\pi^\mu w\in \WTits$:
\begin{align}
  \label{eq:19}
  \ell(\pi^\mu w) = 2 \cdot \height(\mu) + \sum_{\eta\in\Delta^+} 
  \begin{cases}
    - \langle \mu, 2 \eta \rangle & \textif \langle \mu, \eta \rangle < 0 \textand \eta \notin \Inv(w^{-1}) \\
    - \langle \mu, 2 \eta \rangle -1 & \textif \langle \mu, \eta \rangle < 0 \textand \eta \in \Inv(w^{-1}) \\
    1  & \textif \langle \mu, \eta \rangle \geq 0 \textand \eta \in \Inv(w^{-1}) \\
    0  & \textif \langle \mu, \eta \rangle \geq 0 \textand \eta \notin \Inv(w^{-1}). 
  \end{cases}
\end{align}

\section{Maps between double-affine inversion sets}

In order to prove Theorem~4.3, we need to introduce certain maps between inversion sets.

\subsection{The analagous problem for Weyl groups}
We believe that the discussion below is new even for Weyl groups, so let us first consider the analogous problem for the Weyl group $W$ of $\bG$. Let $w \in W$ and $\beta\in\Delta^+$, and let us assume $w(\beta) > 0$, i.e., $w s_\beta > w$ in the Bruhat order. Then $\ell(ws_\beta) > \ell(w)$. The problem that we wish to consider is:

\begin{enumerate}
\item Give an explicit injection $\Inv(w^{-1}) \hookrightarrow \Inv(s_\beta w^{-1})$.
\item Give an explicit bijection between the complement of the image above and the set $\{\gamma \in \Inv(s_\beta) \suchthat w(\gamma) > 0 \textand w(\iota(\gamma)) > 0 \}$, where $\iota = -s_\beta$.
\end{enumerate}

\subsubsection{The solution}

The following gives the solution to the first part of the problem.
\begin{Proposition}
  \label{prop-phi-is-injection}
Assume $w s_\beta > w$. Let $\eta \in \Inv(w^{-1})$. If $s_\beta w^{-1}(\eta) < 0$, then we define $\phi(\eta) = \eta$. If $s_\beta w^{-1}(\eta) > 0$, then we define $\phi(\eta) = s_{w(\beta)}(\eta)$. 
This rule defines an injection $\phi: \Inv(w^{-1}) \hookrightarrow \Inv(s_\beta w^{-1})$.  
\end{Proposition}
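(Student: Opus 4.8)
The plan is to regard $\phi$ as a map defined piecewise on the partition of $\Inv(w^{-1})$ into
\[
A = \{\eta \in \Inv(w^{-1}) \suchthat s_\beta w^{-1}(\eta) < 0\}, \qquad B = \{\eta \in \Inv(w^{-1}) \suchthat s_\beta w^{-1}(\eta) > 0\},
\]
which is a genuine partition because $s_\beta w^{-1}(\eta)$ is a root and hence nonzero. On $A$ the map is the identity and on $B$ it is the restriction of $s_{w(\beta)}$. The single algebraic identity driving the argument is the standard conjugation relation $s_{w(\beta)} = w s_\beta w^{-1}$, which yields both $s_\beta w^{-1} s_{w(\beta)} = w^{-1}$ and $w^{-1} s_{w(\beta)} = s_\beta w^{-1}$.

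First I would check that $\phi$ actually lands in $\Inv(s_\beta w^{-1})$. For $\eta \in A$ this is immediate, since $\phi(\eta) = \eta$ and the defining condition of $A$ is exactly $s_\beta w^{-1}(\eta) < 0$. For $\eta \in B$, the first identity gives $s_\beta w^{-1}(\phi(\eta)) = s_\beta w^{-1} s_{w(\beta)}(\eta) = w^{-1}(\eta) < 0$, the inequality holding because $\eta \in \Inv(w^{-1})$. It remains to confirm that $\phi(\eta) = s_{w(\beta)}(\eta)$ is a \emph{positive} root: since $w(\beta) > 0$ (this is the hypothesis $w s_\beta > w$), the reflection $s_{w(\beta)}$ sends only $w(\beta)$ among positive roots to a negative root, so $s_{w(\beta)}(\eta) < 0$ would force $\eta = w(\beta)$; but $w^{-1}(w(\beta)) = \beta > 0$ shows $w(\beta) \notin \Inv(w^{-1})$, hence $\eta \neq w(\beta)$ and $\phi(\eta) > 0$.

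For injectivity, the restriction of $\phi$ to each of $A$ and $B$ is injective ($\phi|_A$ is the identity and $\phi|_B$ is a restriction of the bijection $s_{w(\beta)}$), so it suffices to show the images $\phi(A)$ and $\phi(B)$ are disjoint. The clean way to separate them is to test membership in $\Inv(w^{-1})$: for $\eta \in A$ we have $\phi(\eta) = \eta \in \Inv(w^{-1})$, whereas for $\eta \in B$ the second identity gives $w^{-1}(\phi(\eta)) = w^{-1} s_{w(\beta)}(\eta) = s_\beta w^{-1}(\eta) > 0$, so $\phi(\eta) \notin \Inv(w^{-1})$. Hence no element of $\phi(A)$ coincides with one of $\phi(B)$, and $\phi$ is injective.

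I expect the main obstacle to be the injectivity step, specifically identifying the right invariant that distinguishes the two branches. The positivity verification and the membership computations are routine once the conjugation identity is in hand, but it is not a priori clear that the ``identity branch'' and the ``reflection branch'' cannot collide. The key observation is that the branch is detected precisely by whether the image lies in $\Inv(w^{-1})$, which is exactly the membership being toggled by $s_{w(\beta)}$; once this is recognized the disjointness is immediate.
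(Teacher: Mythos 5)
Your injectivity argument is fine, but there is a genuine gap in the membership verification for the branch $B$. You justify positivity of $\phi(\eta)=s_{w(\beta)}(\eta)$ by claiming that ``$s_{w(\beta)}$ sends only $w(\beta)$ among positive roots to a negative root.'' That claim is the assertion $\Inv(s_{w(\beta)})=\{w(\beta)\}$, which holds if and only if $w(\beta)$ is a \emph{simple} root; nothing in the hypotheses makes $w(\beta)$ simple. For a non-simple positive real root $\alpha$ one has $\#\Inv(s_\alpha)=\ell(s_\alpha)\geq 3$. Concretely, in type $A_2$ with $\alpha=\alpha_1+\alpha_2$ one computes $s_\alpha(\alpha_1)=\alpha_1-\langle\alpha^\vee,\alpha_1\rangle\alpha=-\alpha_2<0$, so $\alpha_1\in\Inv(s_\alpha)$ even though $\alpha_1\neq\alpha$. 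Note also that your positivity argument never invokes the branch hypothesis $s_\beta w^{-1}(\eta)>0$; it uses only $\eta\in\Inv(w^{-1})$ and $\eta\neq w(\beta)$, and those two facts alone do not imply $s_{w(\beta)}(\eta)>0$. So the step ``$\phi(\eta)$ is a positive root'' is unproven, and it is exactly the nontrivial part of showing $\phi(\eta)\in\Inv(s_\beta w^{-1})$, since by definition an inversion set consists of \emph{positive} roots made negative.

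The correct argument (the one in the paper) uses the branch condition essentially. Since $\eta\in\Inv(w^{-1})$, the root $-w^{-1}(\eta)$ is positive, and the branch hypothesis says $s_\beta(-w^{-1}(\eta))=-s_\beta w^{-1}(\eta)<0$. Writing $s_\beta(-w^{-1}(\eta))=-w^{-1}(\eta)+\langle\betacheck,w^{-1}(\eta)\rangle\beta$, this can be a negative root only if $\langle\betacheck,w^{-1}(\eta)\rangle<0$, since otherwise it is a positive root plus a nonnegative multiple of the positive root $\beta$. Then $s_{w(\beta)}(\eta)=\eta-\langle\betacheck,w^{-1}(\eta)\rangle\,w(\beta)$ is $\eta$ plus a strictly positive multiple of the positive root $w(\beta)$, hence positive. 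With this repaired, the rest of your proposal stands: the computation $s_\beta w^{-1}(\phi(\eta))=w^{-1}(\eta)<0$ is correct, and your separation of the two branches by the invariant ``membership in $\Inv(w^{-1})$'' (namely $\phi(A)\subseteq\Inv(w^{-1})$ while $w^{-1}(\phi(\eta))=s_\beta w^{-1}(\eta)>0$ for $\eta\in B$) is essentially the same disjointness argument the paper uses in its injectivity step.
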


\begin{proof}
Suppose $s_\beta w^{-1}(\eta)>0$. We need to check that $\phi(\eta) \in \Inv(s_\beta w^{-1})$.
Let us write $\zeta = s_{w(\beta)}(\eta) = \phi(\eta)$. First, let us check that $\eta > 0$. We have assumed that $s_\beta w^{-1}(\eta) > 0$, equivalently $-s_\beta w^{-1}(\eta) < 0$. We also have $-w^{-1}(\eta) > 0$. So $-s_\beta w^{-1}(\eta) = s_\beta( - w^{-1}(\eta)) = - w^{-1}(\eta)  - \langle \betacheck, - w^{-1}(\eta) \rangle \beta $. Because $-w^{-1}(\eta) > 0$, this is a negative root only if 
$\langle \betacheck,  w^{-1}(\eta) \rangle < 0$. Now, let us compute $s_{w(\beta)}(\eta) = \eta - \langle w(\betacheck), \eta \rangle w(\beta)$. Because $\langle w(\betacheck), \eta \rangle = \langle \betacheck,  w^{-1}(\eta) \rangle < 0$ and $w(\beta)>0$ by hypothesis, we have $s_{w(\beta)}(\eta) > 0$.

Second, we compute $s_\beta w^{-1}(\zeta) = s_\beta w^{-1} w s_\beta w^{-1} (\eta) = w^{-1} (\eta) < 0$. So $\phi$ defines a map from $\Inv(w^{-1})$ to $\Inv(s_\beta w^{-1})$. 

Finally, we check that $\phi$ is an injection. Let $\eta, \tilde{\eta} \in \Inv(w^{-1})$. Suppose $\phi(\eta) = \phi(\tilde{\eta})$. If the sign of $s_\beta w^{-1}(\eta)$ and $s_\beta w^{-1}( \tilde{\eta})$ are the same, then it is clear that $\eta = \tilde{\eta}$ by the definition of $\phi$. So let assume $s_\beta w^{-1}(\eta) < 0$ and $s_\beta w^{-1}(\tilde{\eta}) > 0$. Then $\phi(\eta) = \eta$ and $\phi(\tilde{\eta}) = s_{w(\beta)}(\tilde{\eta})$. Our assumption is then that $\eta = s_{w(\beta)}(\tilde{\eta})$. But $w^{-1}(\eta) < 0$, while $w^{-1}s_{w(\beta)}(\tilde{\eta}) = s_{\beta}w^{-1}(\tilde{\eta}) > 0$. 
\end{proof}

We know that $\ell(ws_\beta) = \ell(w) + \# \{ \gamma \in \Inv(s_\beta) \suchthat w(\gamma) > 0 \textand w(\iota(\gamma)) > 0 \}$ (here $\iota = - s_\beta$). Therefore, it makes sense to ask for a natural bijection between the complement of the image of $\phi$ and $\{ \gamma \in \Inv(s_\beta) \suchthat w(\gamma) > 0 \textand w(\iota(\gamma)) > 0 \}$.

\begin{Proposition}
  \label{prop-psi-is-an-injection}
  Let $\gamma \in \Inv(s_\beta)$ such that $w(\gamma) > 0$ and $- w s_\beta (\gamma) > 0$. Then define $\psi(\gamma) = w(\gamma)$. This defines an injection $\psi :\{ \gamma \in \Inv(s_\beta) \suchthat w(\gamma) > 0 \textand w(-s_\beta(\gamma)) > 0 \} \hookrightarrow \Inv(s_\beta w^{-1})$.
\end{Proposition}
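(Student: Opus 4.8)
The plan is to split the assertion into its two genuinely separate components: that $\psi$ is well-defined, i.e.\ that $w(\gamma)$ really does land in $\Inv(s_\beta w^{-1})$, and that $\psi$ is injective. Since we are in the Weyl-group setting, I would first unwind the target: $\Inv(s_\beta w^{-1}) = \{\eta \in \Delta^+ \suchthat s_\beta w^{-1}(\eta) < 0\}$, so membership of $w(\gamma)$ requires exactly two conditions, positivity of $w(\gamma)$ and negativity of $s_\beta w^{-1}(w(\gamma))$.

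For well-definedness I would check these two conditions directly. Positivity, $w(\gamma) > 0$, is simply one of the standing hypotheses placed on $\gamma$. For the second condition the key (one-line) computation is $s_\beta w^{-1}(w(\gamma)) = s_\beta(\gamma)$, and this is negative precisely because $\gamma \in \Inv(s_\beta)$ by assumption. Thus both membership conditions are immediate consequences of the hypotheses and no further work is needed.

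Injectivity I expect to be entirely automatic: $\psi$ is nothing but the restriction of the action of $w$ on $\Delta$ to the given subset of $\Delta^+$, and $w$ acts bijectively on $\Delta$, so distinct $\gamma$ yield distinct $w(\gamma)$. Consequently I anticipate no real obstacle in this proposition; it reduces to these short verifications. The one point worth flagging is that the third hypothesis, $w(-s_\beta(\gamma)) > 0$ (equivalently $-ws_\beta(\gamma) > 0$), is \emph{not} used here: it is imposed only so that the domain is exactly the finite set appearing in the length formula, and it will be needed subsequently to match the image of $\psi$ with the complement of the image of $\phi$ from Proposition~\ref{prop-phi-is-injection}, rather than for the injectivity claimed at this stage.
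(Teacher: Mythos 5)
Your proof is correct and matches the paper's argument exactly: well-definedness follows from $w(\gamma)>0$ and the one-line computation $s_\beta w^{-1}(w(\gamma)) = s_\beta(\gamma) < 0$, and injectivity is immediate since $w$ acts bijectively on roots. Your remark that the hypothesis $-ws_\beta(\gamma)>0$ is not needed for this proposition, but only for the later disjointness and covering arguments, is also accurate.
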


\begin{proof}
 Let $\gamma$ be as in the statement. By assumption, we have $w(\gamma) > 0$, and $s_\beta w^{-1} w (\gamma) = s_\beta(\gamma) < 0$. So $\psi(\gamma) \in \Inv(s_\beta w^{-1})$. Clearly $\psi$ is injective.
\end{proof}

\begin{Proposition}
  \label{prop-phi-psi-disjoint}
  The images of $\phi$ and $\psi$ are disjoint.
\end{Proposition}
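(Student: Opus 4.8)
The plan is to argue by contradiction. Suppose some $\zeta \in \Inv(s_\beta w^{-1})$ lies in both images, say $\zeta = \phi(\eta) = \psi(\gamma)$ with $\eta \in \Inv(w^{-1})$ and $\gamma \in \Inv(s_\beta)$ satisfying the two positivity conditions $w(\gamma) > 0$ and $w(-s_\beta(\gamma)) > 0$ that define the domain of $\psi$. The organizing principle is to apply $w^{-1}$ to $\zeta$ and track signs, since $\phi$ and $\psi$ are built out of $w$ and the reflections $s_\beta, s_{w(\beta)}$ in structurally different ways. For $\psi$ the relevant computation is immediate: $w^{-1}(\psi(\gamma)) = \gamma$, which is positive because $\gamma \in \Inv(s_\beta) \subset \Delta^+$.

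Next I would split according to the two cases in the definition of $\phi$. In the first case, $s_\beta w^{-1}(\eta) < 0$ and $\phi(\eta) = \eta$, so $w^{-1}(\zeta) = w^{-1}(\eta) < 0$ because $\eta \in \Inv(w^{-1})$. This already contradicts $w^{-1}(\zeta) = \gamma > 0$, so no element of this first type can coincide with an element in the image of $\psi$.

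This leaves the second case, $s_\beta w^{-1}(\eta) > 0$ with $\phi(\eta) = s_{w(\beta)}(\eta)$, which is where the signs of $w^{-1}(\zeta)$ agree and hence where the real work lies. Here I would apply $w^{-1}$ to the equation $s_{w(\beta)}(\eta) = w(\gamma)$, using $w^{-1} s_{w(\beta)} = s_\beta w^{-1}$, to obtain $\gamma = s_\beta w^{-1}(\eta)$. The key move is then to feed this back into the \emph{second} positivity condition defining the domain of $\psi$: since $s_\beta(\gamma) = s_\beta s_\beta w^{-1}(\eta) = w^{-1}(\eta)$, we compute $w(-s_\beta(\gamma)) = -w(w^{-1}(\eta)) = -\eta$, which is negative because $\eta \in \Inv(w^{-1}) \subset \Delta^+$. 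This contradicts the requirement $w(-s_\beta(\gamma)) > 0$, completing the argument.

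I expect the second case to be the only genuine obstacle: it is precisely the regime in which the coarse invariant $\sgn(w^{-1}(\zeta))$ fails to separate the two images, so one is forced to exploit the finer second positivity condition $w(-s_\beta(\gamma)) > 0$ that is built into the domain of $\psi$ but plays no role in distinguishing the first case. Beyond this, the whole proof is a short bookkeeping of where $w^{-1}$ sends the relevant roots, requiring no input past the definitions of $\phi$, $\psi$, and the sign conventions.
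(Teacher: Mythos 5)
Your proof is correct and follows essentially the same route as the paper's: the same case split on the sign of $s_\beta w^{-1}(\eta)$, the same sign comparison via $w^{-1}(\zeta)=\gamma>0$ in the first case, and the same use of the condition $w(-s_\beta(\gamma))>0$ to derive the contradiction $-\eta>0$ in the second case.
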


\begin{proof}
Let $\gamma \in \Inv(s_\beta)$ such that $w(\gamma) > 0$ and $-  w( s_\beta(\gamma)) >0 $. Then $\psi(\gamma) = w(\gamma)$. Let $\eta \in \Inv(w^{-1})$. 
Let us assume $\psi(\gamma) = \phi(\eta)$.

For the first case, suppose $s_\beta w^{-1} < 0 $. Then $\phi(\eta) = \eta$. Our assumption is then that $\eta = w(\gamma)$, which implies $w^{-1}(\eta) = \gamma$. This is a contradiction since $w^{-1}(\eta) < 0$ while $\gamma > 0$.

For the second case, suppose $s_\beta w^{-1} > 0$. Then $\phi(\eta) = s_{w(\beta)}(\eta)$. Our assumption is then $s_{w(\beta)}(\eta) = w(\gamma)$, which implies $\gamma = s_{\beta} w^{-1} (\eta) > 0$. We also have the assumption that $- w (s_{\beta}(\gamma)) > 0$, which translates to $-\eta > 0$, which is again a contradiction.
\end{proof}

\begin{Corollary}
  \label{cor-phi-psi-cover-everything}
  \begin{align}
    \label{eq:17}
\Inv(s_\beta w^{-1})  = \im \phi \sqcup \im \psi
  \end{align}
\end{Corollary}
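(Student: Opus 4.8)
The plan is to prove the corollary by combining the three preceding propositions with a surjectivity argument. Propositions~\ref{prop-phi-is-injection}, \ref{prop-psi-is-an-injection}, and \ref{prop-phi-psi-disjoint} already establish that $\phi$ and $\psi$ are injections into $\Inv(s_\beta w^{-1})$ with disjoint images, so that $\im\phi \sqcup \im\psi \subseteq \Inv(s_\beta w^{-1})$ is a well-defined disjoint union contained in the target. It therefore remains only to prove the reverse inclusion, namely that every $\delta \in \Inv(s_\beta w^{-1})$ lies in $\im\phi \cup \im\psi$.

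First I would fix $\delta \in \Inv(s_\beta w^{-1})$, so that $\delta > 0$ and $s_\beta w^{-1}(\delta) < 0$, and split on the sign of $w^{-1}(\delta)$. If $w^{-1}(\delta) < 0$, then $\delta \in \Inv(w^{-1})$, and since $s_\beta w^{-1}(\delta) < 0$ the first branch of $\phi$ applies, giving $\phi(\delta) = \delta$; hence $\delta \in \im\phi$.

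The substantive case is $w^{-1}(\delta) > 0$. Here I would set $\gamma = w^{-1}(\delta)$ and observe that $\gamma > 0$ together with $s_\beta(\gamma) = s_\beta w^{-1}(\delta) < 0$ gives $\gamma \in \Inv(s_\beta)$, while $w(\gamma) = \delta > 0$. The remaining dichotomy is governed by the sign of $w s_\beta(\gamma) = s_{w(\beta)}(\delta)$. If $s_{w(\beta)}(\delta) < 0$, then $w(-s_\beta(\gamma)) > 0$, so $\gamma$ lies in the domain of $\psi$ and $\psi(\gamma) = w(\gamma) = \delta$, placing $\delta \in \im\psi$. If instead $s_{w(\beta)}(\delta) > 0$, I would set $\eta = s_{w(\beta)}(\delta) > 0$ and check that $w^{-1}(\eta) = s_\beta w^{-1}(\delta) = s_\beta(\gamma) < 0$, so $\eta \in \Inv(w^{-1})$, and that $s_\beta w^{-1}(\eta) = \gamma > 0$, so the second branch of $\phi$ applies and yields $\phi(\eta) = s_{w(\beta)}(\eta) = \delta$; hence $\delta \in \im\phi$.

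I expect the bookkeeping in this last case to be the only real obstacle: one must track which branch of $\phi$ produces a given $\delta$ and confirm the positivity of $\eta$, using throughout that $w(\beta) > 0$ (from $w s_\beta > w$). A cleaner alternative, available since all inversion sets here are finite, is a counting argument: the identity $\ell(ws_\beta) = \ell(w) + \# \{ \gamma \in \Inv(s_\beta) \suchthat w(\gamma) > 0 \textand w(-s_\beta(\gamma)) > 0 \}$ recalled above, together with $\ell(w) = \#\Inv(w^{-1})$ and $\ell(ws_\beta) = \#\Inv(s_\beta w^{-1})$, forces $\#(\im\phi \sqcup \im\psi) = \#\Inv(s_\beta w^{-1})$, and equality of a subset with the whole finite set gives the claim.
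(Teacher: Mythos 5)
Your proposal is correct and takes essentially the same route as the paper: the same case analysis on the signs of $w^{-1}(\delta)$ and $s_{w(\beta)}(\delta)$, with the same explicit preimages under $\phi$ and $\psi$ in each case. One remark on your suggested counting ``alternative'': the paper mentions it first but deliberately avoids it, since the purpose of the explicit surjectivity argument is that it carries over unchanged to the double-affine setting of Section \ref{SS:double-affine-maps}, where the inversion sets are infinite and counting fails.
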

\begin{proof}
Both sides have the same cardinality by the length formula mentioned above; this gives the proof immediately. However, we would like to give a proof that avoids counting because the relevant sets need not be finite in the double-affine case.
  
Suppose $\theta \in \Inv(s_\beta w^{-1})$, then there are three cases.

\noindent Case 1: If $w^{-1}(\theta) < 0$, then let $\eta = \theta$, and we have $\phi(\eta) = \eta = \theta$.

\noindent Case 2: If $w^{-1}(\theta) > 0$ and $ s_{w(\beta)}(\theta) > 0$, then let $\eta = s_{w(\beta)}(\theta)$. Then $w^{-1}(\eta) = s_\beta w^{-1}(\theta) < 0$, and $s_\beta w^{-1} (\eta) = w^{-1}(\theta) > 0$. So we have $\phi(\eta) = s_{w(\beta} (\eta) = \theta$.

\noindent Case 3: If $w^{-1}(\theta) > 0$ and $ s_{w(\beta)}(\theta) < 0$, then let $\gamma = w^{-1}(\theta)$. Then $s_\beta (\gamma) = s_\beta w^{-1}(\theta) < 0$. So $\gamma \in \Inv(s_\beta)$. We have $w(\gamma) = \theta > 0$ and $- w(s_\beta(\gamma)) = - s_{w(\beta)}(\theta) > 0$. So $\psi(\gamma) = w(\gamma) = \theta$. 
\end{proof}

\subsection{The statements in the $\bG$-affine case}
\label{SS:double-affine-maps}
We can immediately generalize our solution to the $\bG$-affine case.
Let $\pi^\mu w \in \WTits$ and let $\beta[n]\in\tDelta^+$ be a positive $\bG$-affine root such that:
\begin{itemize}
\item $\pi^\mu w s_{\beta[n]} \in \WTits$
\item $\pi^\mu w(\beta[n]) > 0$
\end{itemize}
The constructions and proofs of the previous subsection carry over without change once we substitute $\pi^\mu w$ for $w$ and $\beta[n]$ for $\beta$. The explicit translations are as follows:

\begin{Proposition}
  For each $\eta[m] \in \Inv(w^{-1} \pi^{-\mu})$ define:
  \begin{align}
    \label{eq:53}
    \phi(\eta[m]) = 
    \begin{cases}
      \eta[m] & \textif s_{\beta[n]} w^{-1} \pi^{-\mu} ( \eta[m] ) < 0 \\
      \pi^\mu w s_{\beta[n]} w^{-1} \pi^{-\mu} ( \eta[m]) & \textif s_{\beta[n]} w^{-1} \pi^{-\mu} ( \eta[m] ) > 0.  
    \end{cases}
  \end{align}
This defines an injection $\phi : \Inv(w^{-1} \pi^{-\mu}) \hookrightarrow \Inv(s_{\beta[n]} w^{-1} \pi^{-\mu})$.
\end{Proposition}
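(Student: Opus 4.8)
The plan is to transport the proof of Proposition~\ref{prop-phi-is-injection} essentially verbatim, replacing $w$ by $x := \pi^\mu w$, replacing $\beta$ by $\beta[n]$, and replacing the reflection $s_{w(\beta)}$ by the conjugate $x s_{\beta[n]} x^{-1}$. The two standing hypotheses of Section~\ref{SS:double-affine-maps}, namely $x s_{\beta[n]} \in \WTits$ and $x(\beta[n]) > 0$, play exactly the roles that $w(\beta) > 0$ played in the Weyl-group case. First I would isolate the three structural inputs on which the earlier argument secretly rests: (a) $s_{\beta[n]}$ is an involution of $\tDelta$, so $(x s_{\beta[n]} x^{-1})^2 = \mathrm{id}$; (b) $x = \pi^\mu w$ acts $\ZZ$-linearly on $\ZZ\Delta \oplus \ZZ\pi$ (immediate from $\pi^\mu w(\beta + n\pi) = w(\beta) + (n + \langle \mu, w(\beta)\rangle)\pi$), so that for any $\lambda \in \tDelta$ one has $x s_{\beta[n]} x^{-1}(\lambda) = \lambda - c\, x(\beta[n])$ for some integer $c$; and (c) the \emph{positivity lemma}: if $\lambda, \delta \in \tDelta^+$, $c \in \ZZ_{\geq 0}$, and $\lambda + c\delta \in \tDelta$, then $\lambda + c\delta \in \tDelta^+$.

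With these in hand I would verify the three required properties. \emph{Membership in $\tDelta^+$.} On the branch $s_{\beta[n]} x^{-1}(\eta[m]) < 0$ nothing is needed, since $\phi(\eta[m]) = \eta[m] > 0$. On the branch $s_{\beta[n]} x^{-1}(\eta[m]) > 0$, I set $\lambda := -x^{-1}(\eta[m])$, which is positive because $\eta[m] \in \Inv(x^{-1})$ forces $x^{-1}(\eta[m]) < 0$. Then $s_{\beta[n]}(\lambda) = -s_{\beta[n]} x^{-1}(\eta[m]) < 0$; writing $s_{\beta[n]}(\lambda) = \lambda - c\beta[n]$, the contrapositive of (c) forces $c > 0$. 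Using (b) and linearity, $\phi(\eta[m]) = x s_{\beta[n]} x^{-1}(\eta[m]) = \eta[m] + c\, x(\beta[n])$, which is again an affine root because $W_P$ preserves $\tDelta$; it is a positive-integer combination of the positive affine roots $\eta[m]$ and $x(\beta[n])$, hence positive by (c). \emph{Membership in the target inversion set.} On the first branch $s_{\beta[n]} x^{-1}(\eta[m]) < 0$ directly, so $\phi(\eta[m]) = \eta[m] \in \Inv(s_{\beta[n]} x^{-1})$. On the second branch, (a) gives $s_{\beta[n]} x^{-1}(\phi(\eta[m])) = s_{\beta[n]}^2 x^{-1}(\eta[m]) = x^{-1}(\eta[m]) < 0$, which together with positivity places $\phi(\eta[m])$ in $\Inv(s_{\beta[n]} x^{-1})$.

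\emph{Injectivity.} Within a single branch $\phi$ is either the identity or the involution $x s_{\beta[n]} x^{-1}$, hence injective. For the cross-branch case I would suppose $\phi(\eta[m]) = \phi(\tilde\eta[\tilde m])$ with $\eta[m]$ on the first branch and $\tilde\eta[\tilde m]$ on the second, so that $\eta[m] = x s_{\beta[n]} x^{-1}(\tilde\eta[\tilde m])$; applying $x^{-1}$ yields $x^{-1}(\eta[m]) = s_{\beta[n]} x^{-1}(\tilde\eta[\tilde m]) > 0$, contradicting $x^{-1}(\eta[m]) < 0$. This exhausts the cases.

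I expect the only genuinely new ingredient, and hence the main obstacle, to be the positivity lemma (c): every sign manipulation in the Weyl-group proof tacitly used the classical fact that a nonnegative combination of positive roots is positive, and one must confirm its affine analogue. I would prove (c) from the explicit description of $\tDelta^+$, namely that $a + p\pi$ is positive exactly when $p > 0$, or $p = 0$ and $a \in \Delta^+$. Writing $\lambda = a + p\pi$ and $\delta = b + q\pi$ with $p, q \geq 0$, the $\pi$-coefficient of $\lambda + c\delta$ is $p + cq \geq 0$; if it is strictly positive the root is automatically positive, while if it vanishes then $p = q = 0$, so $a, b \in \Delta^+$ and the claim reduces to the classical statement applied to $a + cb \in \Delta$. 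This reduction to the $\pi$-coefficient, followed by the classical case, is precisely what lets the previous subsection's argument carry over without change.
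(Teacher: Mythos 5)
Your proposal is correct and takes essentially the same route as the paper: the paper's entire justification of this proposition is that the Weyl-group argument of Proposition~\ref{prop-phi-is-injection} ``carries over without change'' under the substitutions $w \mapsto \pi^\mu w$, $\beta \mapsto \beta[n]$ (so $s_{w(\beta)} \mapsto \pi^\mu w\, s_{\beta[n]}\, w^{-1}\pi^{-\mu}$), and that substitution is exactly what you execute, with the same two membership checks and the same cross-branch injectivity contradiction. Your only addition is to isolate and prove the affine positivity lemma (a root of the form $\lambda + c\delta$ with $\lambda,\delta \in \tDelta^+$ and $c \geq 0$ is positive) that the paper leaves implicit, which is a worthwhile clarification rather than a deviation.
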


\begin{Proposition}
  \label{prop-psi-is-injective-double-affine}
  For each $\gamma[m] \in \Inv^{++}_{\pi^\mu w}(s_{\beta[n]})$, define $\psi(\gamma[m]) = \pi^\mu w (\gamma[m])$. Then this defines an injection:
  \begin{align}
    \label{eq:54}
    \psi : \Inv^{++}_{\pi^\mu w} (s_{\beta[n]}) \hookrightarrow \Inv(s_{\beta[n]} w^{-1} \pi^{-\mu})
  \end{align}
\end{Proposition}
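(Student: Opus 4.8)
The plan is to mirror the proof of Proposition~\ref{prop-psi-is-an-injection} essentially verbatim, replacing $w$ by $\pi^\mu w$ and $\beta$ by $\beta[n]$, and to check that nothing in that argument relied on finiteness of the inversion sets. There are only two things to verify: that $\psi$ actually lands in $\Inv(s_{\beta[n]} w^{-1} \pi^{-\mu})$, and that $\psi$ is injective.

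First I would establish well-definedness. Fix $\gamma[m] \in \Inv^{++}_{\pi^\mu w}(s_{\beta[n]})$. Membership of $\psi(\gamma[m]) = \pi^\mu w(\gamma[m])$ in $\Inv(s_{\beta[n]} w^{-1} \pi^{-\mu})$ requires two conditions: that $\pi^\mu w(\gamma[m])$ is a positive $\bG$-affine root, and that $s_{\beta[n]} w^{-1} \pi^{-\mu}$ sends it to a negative one. The first is immediate from the defining condition $\pi^\mu w(\gamma[m]) > 0$ built into $\Inv^{++}_{\pi^\mu w}(s_{\beta[n]})$, which is exactly what places $\pi^\mu w(\gamma[m])$ in $\tDelta^+$. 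For the second, I would use that $W_P$ acts on $\tDelta$ as a group, so that
\begin{align*}
s_{\beta[n]} w^{-1} \pi^{-\mu}\bigl(\pi^\mu w(\gamma[m])\bigr) = s_{\beta[n]}(\gamma[m]),
\end{align*}
since $w^{-1}\pi^{-\mu} = (\pi^\mu w)^{-1}$. The right-hand side is negative precisely because $\gamma[m] \in \Inv(s_{\beta[n]})$. This is the one computation to get right, and it is pure bookkeeping with the group law.

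Injectivity is then immediate: $\psi$ is the restriction to $\Inv^{++}_{\pi^\mu w}(s_{\beta[n]})$ of the action of the single group element $\pi^\mu w \in W_P$, and that action is a bijection of $\tDelta$ (its inverse being the action of $w^{-1}\pi^{-\mu}$). Hence if $\psi(\gamma[m]) = \psi(\gamma'[m'])$, applying $w^{-1}\pi^{-\mu}$ recovers $\gamma[m] = \gamma'[m']$.

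I do not expect a genuine obstacle here; the content is entirely formal. The only point worth flagging is \emph{what is not used}: the argument needs only $\gamma[m] \in \Inv(s_{\beta[n]})$ and $\pi^\mu w(\gamma[m]) > 0$, and never invokes finiteness of any inversion set — which is essential, since these sets are infinite in the $\bG$-affine setting. The remaining defining condition $\pi^\mu w(-s_{\beta[n]}(\gamma[m])) > 0$ is carried along only because it will be needed later, in the $\bG$-affine analogues of Proposition~\ref{prop-phi-psi-disjoint} and Corollary~\ref{cor-phi-psi-cover-everything}, where disjointness of the images of $\phi$ and $\psi$ must be proved directly rather than by the cardinality count available in the Weyl group case.
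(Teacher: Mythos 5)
Your proof is correct and follows exactly the paper's route: the paper proves Proposition~\ref{prop-psi-is-an-injection} by noting $w(\gamma)>0$ by hypothesis and $s_\beta w^{-1}w(\gamma)=s_\beta(\gamma)<0$, with injectivity clear from the group action, and then declares that the argument ``carries over without change'' under the substitutions $w \mapsto \pi^\mu w$, $\beta \mapsto \beta[n]$ --- which is precisely the computation you spell out. Your closing observation that finiteness is never used, and that the condition $\pi^\mu w(-s_{\beta[n]}(\gamma[m]))>0$ is only needed later for disjointness and the covering corollary, matches the paper's own stated reason for avoiding cardinality arguments.
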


\begin{Proposition}
  The images of $\phi$ and $\psi$ are disjoint.
\end{Proposition}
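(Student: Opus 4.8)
The plan is to mimic the proof of Proposition~\ref{prop-phi-psi-disjoint} essentially verbatim, since the only inputs used there are formal properties of the $W_P$-action that survive the passage to the $\bG$-affine setting. Concretely, I would suppose for contradiction that a single element of $\Inv(s_{\beta[n]} w^{-1}\pi^{-\mu})$ lies in both images, say $\phi(\eta[k]) = \psi(\gamma[m])$ with $\eta[k] \in \Inv(w^{-1}\pi^{-\mu})$ and $\gamma[m] \in \Inv^{++}_{\pi^\mu w}(s_{\beta[n]})$, and then split into the two cases governing the definition of $\phi$ in~\eqref{eq:53}.

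In the first case, where $s_{\beta[n]} w^{-1}\pi^{-\mu}(\eta[k]) < 0$, we have $\phi(\eta[k]) = \eta[k]$, so the hypothesis reads $\eta[k] = \pi^\mu w(\gamma[m])$. Applying $(\pi^\mu w)^{-1} = w^{-1}\pi^{-\mu}$ gives $w^{-1}\pi^{-\mu}(\eta[k]) = \gamma[m]$, which is immediately contradictory: $\gamma[m] > 0$ because $\gamma[m] \in \tDelta^+$, while $w^{-1}\pi^{-\mu}(\eta[k]) < 0$ precisely because $\eta[k] \in \Inv(w^{-1}\pi^{-\mu})$. In the second case, where $s_{\beta[n]} w^{-1}\pi^{-\mu}(\eta[k]) > 0$, we have $\phi(\eta[k]) = \pi^\mu w s_{\beta[n]} w^{-1}\pi^{-\mu}(\eta[k])$, so applying $(\pi^\mu w)^{-1}$ to the hypothesis yields $s_{\beta[n]} w^{-1}\pi^{-\mu}(\eta[k]) = \gamma[m]$. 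Applying $s_{\beta[n]}$ and using that $s_{\beta[n]} = \pi^{n\betacheck} s_\beta$ is an involution in $W_P$ gives $w^{-1}\pi^{-\mu}(\eta[k]) = s_{\beta[n]}(\gamma[m])$. Now I would invoke the second defining condition of $\Inv^{++}_{\pi^\mu w}(s_{\beta[n]})$, namely $\pi^\mu w(-s_{\beta[n]}(\gamma[m])) > 0$; since the $W_P$-action on $\tDelta$ is linear and commutes with negation, this rewrites as $-\eta[k] > 0$, i.e. $\eta[k] < 0$, contradicting $\eta[k] \in \tDelta^+$.

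I do not expect a genuine obstacle here, and this is deliberately the easiest of the three compatibility statements: disjointness is a pointwise assertion, so---unlike the surjectivity in the analogue of Corollary~\ref{cor-phi-psi-cover-everything}---no counting is involved and the potential infinitude of the inversion sets is irrelevant. The only points requiring care are the purely formal facts that $s_{\beta[n]}^2 = 1$ (which follows from $s_\beta(\betacheck) = -\betacheck$) and that $\pi^\mu w(-\xi) = -\pi^\mu w(\xi)$ for the action on $\ZZ\Delta\oplus\ZZ\pi$; both are immediate from the definitions in Section~\ref{sec:notation}, so the finite-type argument transfers without modification.
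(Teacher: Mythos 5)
Your proof is correct and is exactly the paper's approach: the paper handles this proposition by declaring that the constructions and proofs of the Weyl-group subsection (in particular Proposition~\ref{prop-phi-psi-disjoint}) carry over without change under the substitutions $w \mapsto \pi^\mu w$ and $\beta \mapsto \beta[n]$, which is precisely the case-by-case transfer you perform. The only inputs you need beyond that transfer --- that $s_{\beta[n]}$ is an involution in $W_P$ and that the action on $\ZZ\Delta\oplus\ZZ\pi$ commutes with negation --- are exactly the formal properties the paper implicitly relies on, so there is no gap.
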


\begin{Corollary}
  \begin{align}
    \label{eq:55}
    \Inv(s_{\beta[n]} w^{-1} \pi^{-\mu}) = \im(\phi) \sqcup \im(\psi)
  \end{align}
\end{Corollary}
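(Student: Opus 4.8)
The plan is to transport the cardinality-free argument of Corollary~\ref{cor-phi-psi-cover-everything} to the $\bG$-affine setting; that proof was deliberately written to avoid counting, so it should apply essentially verbatim once the bookkeeping for $\bG$-affine roots is verified. Write $x = \pi^\mu w$, so that $x^{-1} = w^{-1}\pi^{-\mu}$ and the conjugated reflection $x s_{\beta[n]} x^{-1} = \pi^\mu w s_{\beta[n]} w^{-1}\pi^{-\mu}$ plays the role that $s_{w(\beta)} = w s_\beta w^{-1}$ played in the Weyl case. The preceding Proposition already shows that $\im(\phi)$ and $\im(\psi)$ are disjoint, and the two Propositions before it show that both maps land inside $\Inv(s_{\beta[n]} x^{-1})$. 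Hence it suffices to prove surjectivity: every $\theta \in \Inv(s_{\beta[n]} x^{-1})$ lies in $\im(\phi) \cup \im(\psi)$.

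First I would fix $\theta \in \Inv(s_{\beta[n]} x^{-1})$ and split into three cases according to the signs of $x^{-1}(\theta)$ and $x s_{\beta[n]} x^{-1}(\theta)$, mirroring Cases 1--3 of Corollary~\ref{cor-phi-psi-cover-everything}. If $x^{-1}(\theta) < 0$, set $\eta[m] = \theta$; then $\eta[m] \in \Inv(x^{-1})$ and $s_{\beta[n]} x^{-1}(\eta[m]) = s_{\beta[n]} x^{-1}(\theta) < 0$, so $\phi(\eta[m]) = \theta$. If $x^{-1}(\theta) > 0$ \textand $x s_{\beta[n]} x^{-1}(\theta) > 0$, set $\eta[m] = x s_{\beta[n]} x^{-1}(\theta)$, which is positive by the case hypothesis; using $s_{\beta[n]}^2 = 1$ in $W_P$ one computes $x^{-1}(\eta[m]) = s_{\beta[n]} x^{-1}(\theta) < 0$ and $s_{\beta[n]} x^{-1}(\eta[m]) = x^{-1}(\theta) > 0$, which places $\eta[m]$ in the second branch of $\phi$ and gives $\phi(\eta[m]) = \theta$. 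Finally, if $x^{-1}(\theta) > 0$ \textand $x s_{\beta[n]} x^{-1}(\theta) < 0$, set $\gamma[m] = x^{-1}(\theta) > 0$; then $s_{\beta[n]}(\gamma[m]) = s_{\beta[n]} x^{-1}(\theta) < 0$ shows $\gamma[m] \in \Inv(s_{\beta[n]})$, while $x(\gamma[m]) = \theta > 0$ and $x(-s_{\beta[n]}(\gamma[m])) = -x s_{\beta[n]} x^{-1}(\theta) > 0$ are precisely the two conditions defining $\Inv^{++}_{\pi^\mu w}(s_{\beta[n]})$, whence $\psi(\gamma[m]) = \theta$.

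The only genuine point requiring care --- and the step I expect to be the main obstacle --- is the positivity bookkeeping for $\bG$-affine roots. Unlike in the Weyl group, positivity of an element of $\tDelta$ is defined through the decomposition into $\tDelta^+$ and its negative together with the grading by the $\pi$-coefficient, and individual reflections do not preserve $\tDelta^+$. What rescues the argument is that every element of $W_P$ acts as a bijection of $\tDelta$, sending each affine root to a well-defined positive or negative affine root; consequently the symbols ``$>0$'' and ``$<0$'' above are unambiguous and the trichotomy on signs is genuinely exhaustive. Granting this, all the identities used are purely formal consequences of the group relations (invertibility of $x$ and $s_{\beta[n]}^2 = 1$) and make no reference to finiteness of the sets involved. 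Since the three cases cover all of $\Inv(s_{\beta[n]} x^{-1})$ and the images of $\phi$ and $\psi$ are disjoint, this establishes the decomposition \eqref{eq:55}.
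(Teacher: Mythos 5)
Your proof is correct and is essentially the paper's own argument: the paper proves this corollary simply by asserting that the constructions and proofs of the Weyl-group subsection (in particular the three-case, counting-free proof of Corollary~\ref{cor-phi-psi-cover-everything}) carry over without change under the substitution of $\pi^\mu w$ for $w$ and $\beta[n]$ for $\beta$, which is precisely the translation you have spelled out, including the correct replacement of $s_{w(\beta)}$ by the conjugate $\pi^\mu w\, s_{\beta[n]}\, w^{-1}\pi^{-\mu}$. Your added remark that the $W_P$-action preserves $\tDelta = \tDelta^+ \sqcup (-\tDelta^+)$, so the sign trichotomy is exhaustive, is the right justification for why the carry-over is legitimate.
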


\section{Some results on inversion sets and proof of Theorem \ref{thm-length-formula-for-sbeta-inversion-set}}

\subsection{Explicit computation of double-affine inversion sets}

%For $\pi^\mu w\in \WTits$ and $\eta[m]\in\tDelta^+$, we compute
%\begin{align*}
  %\label{eq:56}
%  w^{-1} \pi^{-\mu} ( \eta[m]) = \sgn(m) w^{-1} \pi^{-\mu} (\eta + m \pi) = 
%\sgn(m) \left( w^{-1}(\eta) + (m - \langle \mu, \eta \rangle ) \pi \right)
%\end{align*}
%If $\langle \mu, \eta \rangle \geq 0$, then $m \geq 0$ is a necessary condition for this double-affine root to be negative. Similarly, if $\langle \mu, \eta \rangle < 0$, then $m < 0$ is a necessary condition for $\eta[m]$ to be inverted by $\pi^\mu w$.

By direct computation, one finds that
\begin{align}
  \label{eq:46}
  \Inv(w^{-1}\pi^{-\mu}) = \left\{ \eta[m]\in\tDelta^+ \suchthat  
  \begin{cases}
     \langle \mu, \eta \rangle \leq m < 0 & \textif \langle \mu, \eta \rangle < 0 \textand \eta \notin \Inv(w^{-1}) \\
     \langle \mu, \eta \rangle < m < 0 & \textif \langle \mu, \eta \rangle < 0 \textand \eta \in \Inv(w^{-1}) \\
     0 \leq m \leq \langle \mu, \eta \rangle  & \textif \langle \mu, \eta \rangle \geq 0 \textand \eta \in \Inv(w^{-1}) \\
     0 \leq m < \langle \mu, \eta \rangle  & \textif \langle \mu, \eta \rangle \geq 0 \textand \eta \notin \Inv(w^{-1}) 
  \end{cases}
                                                \right\}. 
\end{align}

Let $\cS \subset \Delta^+$ be a finite subset of the positive single-affine real roots, and let us define 
\begin{align*}
\Inv_{\cS}(w^{-1} \pi^{-\mu}) = \{ \eta[m] \suchthat \eta \in \cS \textand \eta[m] \in \Inv(w^{-1} \pi^{-\mu}) \}.
\end{align*}
Then $\Inv_{\cS}(w^{-1} \pi^{-\mu})$ is finite and we have:
\begin{align}
  \label{E:invS}
  \# \Inv_{\cS}(w^{-1} \pi^{-\mu}) = \sum_{\eta \in \cS} \left( |\langle \mu, \eta \rangle| + 
  \begin{cases}
    -1 & \textif \langle \mu, \eta \rangle < 0  \textand \eta \in \Inv(w^{-1}) \\
    +1 & \textif \langle \mu, \eta \rangle \geq 0  \textand \eta \in \Inv(w^{-1}) \\
    0 & \text{ otherwise}
  \end{cases}
        \right).
\end{align}

\subsection{Finiteness of $\Inv^{++}_{\pi^\mu w} (s_{\beta[n]})$}

\begin{Theorem}
Let us assume that $\pi^\mu w$ and $\beta[n]$ are as in Section~\ref{SS:double-affine-maps}, i.e., $\pi^\mu w(\beta[n]) > 0$ and $ \pi^\mu w s_{\beta[n]} \in \WTits$. Then the set $\Inv^{++}_{\pi^\mu w} (s_{\beta[n]})$ is finite. 
\end{Theorem}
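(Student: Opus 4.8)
The plan is to bound separately the classical part $\gamma\in\Delta^+$ and the level $m\in\ZZ$ of an element $\gamma[m]\in\Inv^{++}_{\pi^\mu w}(s_{\beta[n]})$. First I would dispose of the trivial case $n=0$: there $s_{\beta[0]}=s_\beta\in W$ and $\Inv(s_\beta)$ is finite, so assume $n\neq 0$. Rewriting $s_{\beta[n]}=s_\beta\pi^{-n\betacheck}$ and feeding this into the explicit description \eqref{eq:46} of double-affine inversion sets shows that, for each fixed classical part $\gamma$, the levels $m$ with $\gamma[m]\in\Inv(s_{\beta[n]})$ form a finite set, confined by $|m|\le|\langle n\betacheck,\gamma\rangle|$ and with $\sgn(m)=\sgn\langle n\betacheck,\gamma\rangle$ for $m\neq 0$. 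Since $\Inv^{++}_{\pi^\mu w}(s_{\beta[n]})\subseteq\Inv(s_{\beta[n]})$, it therefore suffices to prove that only finitely many classical parts $\gamma$ occur.

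To control the classical parts I would push the two defining positivity conditions through the (linear) action of $\pi^\mu w$. Put $\theta_1=\pi^\mu w(\gamma[m])$ and $\theta_2=\pi^\mu w(-s_{\beta[n]}(\gamma[m]))$; by definition, $\gamma[m]\in\Inv^{++}_{\pi^\mu w}(s_{\beta[n]})$ is exactly the statement that $\theta_1,\theta_2\in\tDelta^+$. Their real (classical) parts are $\sgn(m)\,w(\gamma)$ and $-\sgn(m)\,ws_\beta(\gamma)=-\sgn(m)\,s_{w(\beta)}(w(\gamma))$, so the pair is governed by the single classical root $w(\gamma)$ and its image under the reflection $s_{w(\beta)}$. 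I would then split into cases according to the signs of $w(\gamma)$ and $s_{w(\beta)}(w(\gamma))$. If these signs are opposite, then $w(\gamma)\in\Inv(s_{w(\beta)})\cup(-\Inv(s_{w(\beta)}))$; as $s_{w(\beta)}$ is a single reflection in $W$, this is a finite set of roots, which confines $\gamma$ to a finite set.

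The heart of the matter is the remaining case, in which $w(\gamma)$ and $s_{w(\beta)}(w(\gamma))$ have the same sign. Here I would exploit that positivity of $\theta_i$ forces its coefficient of $\pi$ to be $\ge 0$, and $\ge 1$ precisely when its real part is negative. In the representative situation $n>0$, $\langle\betacheck,\gamma\rangle>0$ (so $m\ge 0$) these coefficients are $m+\langle\mu,w(\gamma)\rangle$ for $\theta_1$ and $-m-\langle\mu',ws_\beta(\gamma)\rangle$ for $\theta_2$, where $\mu'=\mu+n\,w(\betacheck)$ is defined by $\pi^\mu w s_{\beta[n]}=\pi^{\mu'}ws_\beta$. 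Combining the appropriate one of these with the level bound $m\le\langle n\betacheck,\gamma\rangle$ coming from membership in $\Inv(s_{\beta[n]})$, a one-line computation yields $\langle\mu',\xi\rangle\le-1$ for $\xi=ws_\beta(\gamma)$ (when both real parts are positive) or $\xi=-w(\gamma)$ (when both are negative). The decisive point is that $\{\xi\in\Delta^+:\langle\mu',\xi\rangle<0\}$ is \emph{finite}: the hypothesis $\pi^\mu w s_{\beta[n]}\in\WTits$ says exactly that $\mu'\in\cT$, and a coweight lies in the Tits cone if and only if it is negative on only finitely many positive roots (apply a $W$-element making it dominant, as in the proof of Proposition \ref{prop-height-formula}). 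This again confines $\gamma$ to a finite set; the sign-symmetric cases ($\langle\betacheck,\gamma\rangle<0$ and $n<0$) are handled identically after swapping the roles of $\mu$ and $\mu'$, using $\mu\in\cT$. The main obstacle is precisely this same-sign case: unlike the opposite-sign case it is not a statement about the finite root system alone, so one must route both the inversion-range bound and the Tits-cone finiteness of $\mu'$ into a single inequality, and carefully track the strict-versus-nonstrict boundaries (the $\pm1$ shifts in \eqref{eq:46} and in the positivity of the $\theta_i$).
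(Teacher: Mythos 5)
Your proof is correct, and it reaches the finiteness by a genuinely different decomposition than the paper, even though both rest on the same two finiteness inputs. The paper's first move is to transport $\Inv^{++}_{\pi^\mu w}(s_{\beta[n]})$ through the injection $\psi$ of Proposition \ref{prop-psi-is-injective-double-affine}: by the proof of Corollary \ref{cor-phi-psi-cover-everything}, its image consists of the $\theta[m]\in\tDelta^+$ satisfying three sign conditions, and expanding these gives the sandwich $\langle\mu,\theta\rangle\le m\le(n+\langle\mu,w(\beta)\rangle)\langle w(\betacheck),\theta\rangle$, whose solvability in $m$ forces $\langle\nu,\theta\rangle\le 0$ for the single reflected coweight $\nu=s_{w(\beta)}(\mu+nw(\betacheck))\in\TitsCone$; the boundary case $\langle\nu,\theta\rangle=0$ is absorbed by the finiteness of $\Inv(s_{w(\beta)})$. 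You never invoke $\psi$: you work with $\gamma[m]$ itself, first bounding the level $m$ for each classical part $\gamma$ via \eqref{eq:46} applied to $s_{\beta[n]}=s_\beta\pi^{-n\betacheck}$, then bounding the classical parts by the sign dichotomy on $w(\gamma)$ versus $s_{w(\beta)}(w(\gamma))$ --- opposite signs land in the finite set $\pm\Inv(s_{w(\beta)})$, same signs yield $\langle\mu+nw(\betacheck),\xi\rangle\le-1$ for a positive root $\xi$ built from $\gamma$, and the Tits-cone membership $\mu+nw(\betacheck)\in\TitsCone$ (equivalent to $\pi^\mu ws_{\beta[n]}\in\WTits$) finishes. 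The two arguments are tied together by the identity $\langle\nu,\theta\rangle=\langle\mu+nw(\betacheck),s_{w(\beta)}(\theta)\rangle$: your reflection-sign dichotomy is the domain-side avatar of the paper's reflected coweight. The paper's route is shorter because the $\phi/\psi$ machinery was already built; yours is self-contained but pays in case analysis. One small correction to your sketch: writing the $\pi$-coefficients uniformly as $|m|+\langle\mu,\sgn(m)w(\gamma)\rangle$ for $\theta_1$ and $-|m|+\langle\mu+nw(\betacheck),-\sgn(m)ws_\beta(\gamma)\rangle$ for $\theta_2$, one finds that in the same-sign case it is always $\mu'=\mu+nw(\betacheck)$ whose pairing becomes $\le -1$ (for $m\neq0$ the level bound $|m|\le|n\langle\betacheck,\gamma\rangle|$ together with $\sgn(m)=\sgn(n\langle\betacheck,\gamma\rangle)$ converts the $\theta_1$-inequality into one for $\mu'$), regardless of the signs of $n$ and $\langle\betacheck,\gamma\rangle$; the role you assign to $\mu\in\TitsCone$ arises only in the $m=0$ edge case. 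This is harmless, since both $\mu$ and $\mu'$ lie in $\TitsCone$, but the clean ``swap $\mu\leftrightarrow\mu'$'' symmetry you describe is not quite how the cases actually resolve.
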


\begin{proof}
  
By Proposition \ref{prop-psi-is-injective-double-affine}, we can identify this set with its image under the map $\psi$. By the proof of Corollary \ref{cor-phi-psi-cover-everything}, we can identify the image of $\psi$ with the set of positive $\bG$-affine real roots $\theta[m]$ such that:
\begin{itemize}
\item $s_{\beta[n]} w^{-1} \pi^{-\mu} \left(\theta[m] \right) < 0$
\item $w^{-1} \pi^{-\mu} \left(\theta[m] \right) > 0$ 
\item $ \pi^{\mu} w s_{\beta[n]} w^{-1} \pi^{-\mu} \left(\theta[m] \right) < 0 $
\end{itemize}
By \eqref{eq:46}, to show that $\Inv^{++}_{\pi^\mu w} (s_{\beta[n]})$ is finite it suffices to show that only finitely many $\theta$ can occur.

For the first condition, we compute:
\begin{align*}
  %\label{eq:58}
s_{\beta[n]} w^{-1} \pi^{-\mu} \left(\theta[m] \right)  &= 
\pi^{n \betacheck} s_\beta w^{-1} \pi^{-\mu} \left(\theta[m] \right) \\
 &= \pi^{n \betacheck}  \sgn(m) \left(  s_\beta w^{-1}(\theta) + ( m - \langle \mu, \theta \rangle ) \pi \right)\\
 &= 
\sgn(m) \left(  s_\beta w^{-1}(\theta) + ( m - \langle \mu, \theta \rangle  - n \langle w(\betacheck),\theta \rangle) \pi \right)\\
 &=
\sgn(m) \left(  s_\beta w^{-1}(\theta) + ( m - \langle \mu + n w(\betacheck), \theta \rangle  ) \pi \right). 
\end{align*}
For the second condition:
\begin{align*}
  %\label{eq:59}
w^{-1} \pi^{-\mu} \left(\theta[m] \right)  = 
\sgn(m) (w^{-1}(\theta) + (m - \langle \mu, \theta \rangle) \pi ).
\end{align*}
For the third condition:
\begin{align*}
  %\label{eq:60}
 \pi^{\mu} w s_{\beta[n]} w^{-1} \pi^{-\mu} \left(\theta[m] \right)  &=   
   \pi^{\mu} w  
\sgn(m) \left(  s_\beta w^{-1}(\theta) + ( m - \langle \mu + n w(\beta), \theta \rangle  ) \pi \right)\\
&= \sgn(m) \pi^{\mu} (s_{w(\beta)}(\theta) + ( m - \langle \mu + n w(\betacheck), \theta \rangle  ) \pi )\\
&= \sgn(m) (s_{w(\beta)}(\theta) + ( m - \langle \mu + n w(\betacheck), \theta \rangle  + \langle \mu, s_{w(\beta)}(\theta)\rangle) \pi ).
\end{align*}

Because $\mu + n w(\betacheck) \in \TitsCone$ by assumption, $\langle \mu + n w(\betacheck), \theta \rangle \geq 0$ for almost all $\theta$. As we are interested in proving finiteness of the set of $\theta$ that occur, we can go ahead and assume  $\langle \mu + n w(\betacheck), \theta \rangle \geq 0$. 
Then the first condition necessitates that $m \geq 0$. The second condition requires that $\langle \mu, \theta \rangle \leq m$.

To handle the third condition, we compute:
\begin{align*}
  %\label{eq:63}
 \langle \mu + n w(\betacheck), \theta \rangle  - \langle \mu, s_{w(\beta)}(\theta)\rangle &=
\langle \mu, \theta \rangle + n \langle  w(\betacheck), \theta \rangle - \langle \mu, \theta \rangle + \langle \mu, w(\beta) \rangle \langle w(\betacheck) , \theta \rangle\\
&= (n + \langle \mu, w(\beta) \rangle ) \langle w(\betacheck), \theta \rangle.
\end{align*}
We see that the third condition necessitates that $m \leq 
(n + \langle \mu, w(\beta) \rangle ) \langle w(\betacheck), \theta \rangle$.
Note that $n + \langle \mu, w(\beta) \rangle $ does not depend on $\theta$. 

\comment{
Let us further assume the following hypotheses.
\begin{Hypothesis}
The pairing between roots and coroots is uniformly bounded above, i.e., assume that $G$ has the property that there exists a positive integer $N$ such that
\begin{align}
  \langle \gamma^\vee, \beta \rangle < N 
\end{align}
whenever $\gamma^\vee$ is a coroot and $\beta$ is a root.
\end{Hypothesis}
\begin{Hypothesis}
Almost all roots pair with $\mu$ to large values, i.e., for any $N > 0$, the set of roots $\theta$ such that $\langle \mu, \theta \rangle < N$ is finite.
\end{Hypothesis}

The first hypthesis is true in the affine case. Perhaps it is true more generally, but I don't immediately know. The second hypothesis is true whenever $\mu$ has strictly positive level (level zero isn't interesting). For the general Kac-Moody case, our results carry over once we've verified these hypotheses (or have otherwise verified finiteness of $\Inv^{++}_{\pi^\mu w}(s_{\beta[n})$).
}

The second and third conditions imply that $\langle \mu, \theta \rangle \leq m \leq (n + \langle \mu, w(\beta) \rangle ) \langle w(\betacheck), \theta \rangle$. Since
\begin{align*}
\mu - (n + \langle \mu, w(\beta) \rangle) w(\betacheck) = s_{w(\beta)}(\mu + n w(\betacheck)),
\end{align*}
there exist $m$ in this range if and only if $\langle s_{w(\beta)}(\mu + n w(\betacheck)), \theta \rangle \le 0$. By assumption, $\mu + n w(\betacheck) \in \TitsCone$ and hence $\nu = s_{w(\beta)}(\mu + n w(\betacheck)) \in \TitsCone$. The set of $\theta$ such that $\langle \nu, \theta \rangle < 0$ therefore must be finite. While $\langle \nu, \theta \rangle = 0$ is possible for infinitely many $\theta$, we can have $\theta[m] \in \Inv^{++}_{\pi^\mu w}(s_{\beta[n]})$ in this case only if $s_{w(\beta)}(\theta)<0$. Since $\Inv(s_{w(\beta)})$ is finite, we have our result.
\end{proof}

% By the hypotheses, this can only hold for a finite set of $\theta$. So we have our finiteness result.

\subsection{Putting it all together}

\begin{proof}[Proof of Theorem \ref{thm-length-formula-for-sbeta-inversion-set}]
Let $\cS \subset \Delta^+$ be a finite subset of the positive single-affine roots such that:
\begin{enumerate}
\item $\cS$ is invariant under $|s_{w(\beta)}|$
\item $\cS$ contains $\Inv(s_{w(\beta)})$ and $\Inv(w^{-1})$
\item $\cS$ contains all $\eta$ such that there exists $m$ such that $\eta[m] \in \im(\psi)$.
\item $\cS$ contains $\eta$ such that $\langle \mu, \eta \rangle < 0$.
\item $\cS$ contains $\eta$ such that $\langle \mu + n w(\betacheck), \eta \rangle < 0$.
\end{enumerate}
The fact that such an $\cS$ exists follows from the finiteness of $\Inv^{++}_{\pi^\mu w}(s_{\beta[n]})$ and the assumption that $\mu,\mu+nw(\betacheck)\in\TitsCone$. Let us define $\cS^c = \Delta^+ \backslash \cS$. Then $\cS^c$ is also invariant under $|s_{w(\beta)}|$.

Let us also observe that if $\eta[m] \in \Inv(w^{-1}\pi^{-\mu})$, then $\phi(\eta[m]) = \eta[m]$ or $\phi(\eta[m]) = |s_{w(\beta)}|(\eta)[p]$ for some integer $p$. This implies, that $\phi$ restricts to a map:
\begin{align*}
  %\label{eq:64}
  \phi: \Inv_{\cS^c} (w^{-1} \pi^{-\mu}) \rightarrow \Inv_{\cS^c}(s_{\beta[n]} w^{-1} \pi^{-\mu}).
\end{align*}
By the third condition that $\cS$ must satisfy, we see that this map must be a bijection. We deduce that
\begin{align*}
  %\label{eq:61}
  \# \Inv^{++}_{\pi^\mu w}(s_{\beta[n]}) = \# \im(\psi) = \# \Inv_{\cS}(s_{\beta[n]} w^{-1} \pi^{-\mu}) - \# \Inv_{\cS}(w^{-1} \pi^{-\mu}).
\end{align*}

It remains to compute the right-hand side of the previous formula and show that it is equal to the difference $\ell(\pi^\mu w)-\ell(\pi^\mu ws_{\beta[n]})$. By \eqref{E:invS}, we have:
\begin{align*}
  %\label{eq:057}
  \# \Inv_{\cS}(w^{-1} \pi^{-\mu}) = \sum_{\eta \in \cS} \left( |\langle \mu, \eta \rangle| + 
  \begin{cases}
    -1 & \textif \langle \mu, \eta \rangle < 0  \textand \eta \in \Inv(w^{-1}) \\
    +1 & \textif \langle \mu, \eta \rangle \geq 0  \textand \eta \in \Inv(w^{-1}) \\
    0 & \text{ otherwise}
  \end{cases}
        \right).
\end{align*}
Let us write $s_{\beta[n]} w^{-1} \pi^{-\mu} = \pi^{n\betacheck} s_\beta w^{-1} \pi^{-\mu} = s_\beta w^{-1} \pi^{ - (\mu + n w(\betacheck) )}$. Also by \eqref{E:invS}, we have: 
\begin{align*}
  %\label{eq:62}
&\# \Inv_{\cS}(s_{\beta[n]} w^{-1} \pi^{-\mu})= \# \Inv_{\cS}( s_\beta w^{-1} \pi^{ - (\mu + n w(\betacheck) )})\\ &=  
\sum_{\eta \in \cS} \left( |\langle \mu + n w(\betacheck), \eta \rangle| + 
  \begin{cases}
    -1 & \textif \langle \mu + n w(\betacheck), \eta \rangle < 0  \textand \eta \in \Inv(s_\beta w^{-1}) \\
    +1 & \textif \langle \mu + n w(\betacheck), \eta \rangle \geq 0  \textand \eta \in \Inv(s_\beta w^{-1}) \\
    0 & \text{ otherwise}
  \end{cases}
        \right).
\end{align*}

Using the length formulas, we have:
\begin{align*}
  %\label{eq:019}
  \ell(\pi^\mu w) &= 2 \cdot \height(\mu) + \sum_{\eta \in\Delta^+} 
  \begin{cases}
    - \langle \mu, 2 \eta \rangle & \textif \langle \mu, \eta \rangle < 0 \textand \eta \notin \Inv(w^{-1}) \\
    - \langle \mu, 2 \eta \rangle -1 & \textif \langle \mu, \eta \rangle < 0 \textand \eta \in \Inv(w^{-1}) \\
    1  & \textif \langle \mu, \eta \rangle \geq 0 \textand \eta \in \Inv(w^{-1}) \\
    0  & \textif \langle \mu, \eta \rangle \geq 0 \textand \eta \notin \Inv(w^{-1}) 
  \end{cases} \\
   &= 2 \cdot \height(\mu) - \sum_{\eta \in \cS} \langle \mu, \eta \rangle +  
\sum_{\eta \in \cS} \left( |\langle \mu, \eta \rangle| + 
  \begin{cases}
    -1 & \textif \langle \mu, \eta \rangle < 0  \textand \eta \in \Inv(w^{-1}) \\
    +1 & \textif \langle \mu, \eta \rangle \geq 0  \textand \eta \in \Inv(w^{-1}) \\
    0 & \text{ otherwise}
  \end{cases}
        \right) \\
&= 2 \cdot \height(\mu) - \sum_{\eta \in \cS} \langle \mu, \eta \rangle +   \# \Inv_{\cS}(w^{-1} \pi^{-\mu}).
\end{align*}
The second equality follows by the fourth and second conditions on $\cS$.
Similarly, using the fifth and second conditions, we can compute:
\begin{align}
  \label{eq:65}
  \ell(\pi^{\mu + n w(\betacheck)} w s_\beta) = 2 \cdot \height(\mu + n w(\betacheck)) - \sum_{\eta \in \cS}\langle \mu + n w(\betacheck), \eta \rangle + \# \Inv_{\cS}(s_{\beta[n]} w^{-1} \pi^{-\mu}).
\end{align}

%Now, let us compute:
%\begin{align}
%  \label{eq:66}
%  2 \cdot \height(\mu + n w(\beta)) - \sum_{\eta \in \cS}\langle \mu + n w(\beta), \eta \rangle = \\
%2 \cdot \height(\mu) - \sum_{\eta \in \cS} \langle \mu, \eta \rangle + n \cdot (2 \cdot \height(w(\beta)) - \sum_{\eta \in \cS} \langle w(\beta), \eta \rangle)
%\end{align}
By Propostion \ref{prop-the-height-formula} and the first two conditions on $\cS$, we have $2 \cdot \height(w(\betacheck)) = \sum_{\eta \in \cS} \langle w(\betacheck), \eta \rangle$.
%Therefore, 
%\begin{align}
%  \label{eq:67}
%  2 \cdot \height(\mu + n w(\beta)) - \sum_{\eta \in \cS}\langle \mu + n w(\beta), \eta \rangle = \\
%2 \cdot \height(\mu) - \sum_{\eta \in \cS} \langle \mu, \eta \rangle 
%\end{align}
%which implies:
Therefore:
\begin{align*}
  %\label{eq:68}
  \ell(\pi^{\mu + n w(\betacheck)} w s_\beta)  -  \ell(\pi^\mu w) = \# \Inv_{\cS}(s_{\beta[n]}w^{-1} \pi^{-\mu}) - \# \Inv_{\cS}(w^{-1} \pi^{-\mu}) =  \# \Inv^{++}_{\pi^\mu w} (s_{\beta[n]}).
\end{align*}
This completes the proof.
\end{proof}

\section{Classification of covers}

\subsection{General conjecture}
For $x,y \in \WTits$, let us write $x\lhd y$ to indicate a covering relation, i.e., $x\lhd y$ if and only $x<y$ and $\{z \suchthat x<z<y\}=\emptyset$.

By the definition of the order, a necessary condition for $ x \lhd y$ is that $y = x s_{\beta[n]}$ for some $\bG$-affine real root $\beta[n]$.
Motivated by the well-known characterization of covering relations for the Bruhat order on a Coxeter group, we make the following:

\begin{Conjecture}\label{conj:covers}
We have $x\lhd y$ if and only $ x < y$ and $\ell(y)=\ell(x)+1$.
\end{Conjecture}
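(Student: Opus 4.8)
The easy direction is immediate from strict compatibility. Suppose $x < y$ with $\ell(y) = \ell(x) + 1$. If $x$ were not covered by $y$, there would be some $z$ with $x < z < y$; then Theorem~\ref{thm:compatibility-of-ell-with-order} would force $\ell(x) < \ell(z) < \ell(y) = \ell(x) + 1$, which is impossible for integers. Hence $x \lhd y$. This argument uses only that $\ell$ is $\ZZ$-valued and strictly compatible with the order, so it holds for an arbitrary Kac-Moody group $\bG$.

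All the content is therefore in the reverse implication: if $x \lhd y$ then $\ell(y) = \ell(x) + 1$. By the necessary condition for covering noted above we may write $y = x s_{\beta[n]}$ with $x = \pi^\mu w$, $x(\beta[n]) > 0$, and $y \in \WTits$; and Theorem~\ref{thm-length-formula-for-sbeta-inversion-set} gives $\ell(y) - \ell(x) = \#\Inv^{++}_{\pi^\mu w}(s_{\beta[n]})$. Since this set always contains $\beta[n]$, proving the conjecture amounts to showing that
\[
\#\Inv^{++}_{\pi^\mu w}(s_{\beta[n]}) \geq 2 \quad\Longrightarrow\quad \text{there exists } z \text{ with } x < z < y,
\]
for then no cover can have length gap exceeding $1$. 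My plan is to produce such a $z$ from the ``extra'' inversions: choosing $\gamma[m] \in \Inv^{++}_{\pi^\mu w}(s_{\beta[n]})$ with $\gamma[m] \neq \beta[n]$ and setting $z = x s_{\gamma[m]}$. Since $x(\gamma[m]) > 0$, one has $x < z$ the moment $z \in \WTits$, and $z \neq y$ because $\gamma[m] \neq \beta[n]$; the crux is to guarantee $z \in \WTits$ and $z < y$.

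To control these two conditions I would exploit the explicit description of the inversion sets. Membership of $\gamma[m]$ in $\Inv^{++}_{\pi^\mu w}(s_{\beta[n]})$ is governed, via \eqref{eq:46} and the pairing identities in the finiteness proof above, entirely by the integers $\langle \mu, \gamma\rangle$, $\langle w(\betacheck),\gamma\rangle$, $m$, and $n$. In untwisted affine ADE type all real roots have the same length, so $\langle \betacheck, \gamma\rangle \in \{0,\pm 1,\pm 2\}$ for every real root $\gamma$, with $\pm 2$ occurring only when $\gamma = \pm\beta$. This boundedness lets one sort the elements of $\Inv^{++}_{\pi^\mu w}(s_{\beta[n]})$ along $\beta$-strings and reduce the search for an intermediate $z$ to a calculation inside the affine $SL_2$ generated by $s_\beta$ and the translations by $w(\betacheck)$. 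In that rank-one model $\cT$ meets the relevant line in a half-line, the translates $\mu$ and $\mu + n w(\betacheck)$ bounding $x$ and $y$ both lie in $\cT$, and convexity of $\cT$ then supplies the intermediate coweight placing $z = x s_{\gamma[m]}$ in $\WTits$ and strictly between $x$ and $y$.

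The main obstacle, and the reason the theorem is established only in ADE type, is precisely the passage from the combinatorial count $\#\Inv^{++}_{\pi^\mu w}(s_{\beta[n]}) \geq 2$ to the existence of an intermediate element in a setting where $\WTits$ is only a semigroup and no Coxeter-theoretic exchange or subword property is available. One cannot simply invoke gradedness of a Coxeter Bruhat order; instead one must verify by hand that some second reflection $s_{\gamma[m]}$ yields $z = x s_{\gamma[m]}$ with $z \in \WTits$ and $z < y$, and I expect the delicate point to be simultaneously meeting the Tits-cone condition $\mu + m\, w(\gamma^\vee) \in \cT$ and the comparability $z < y$. Outside ADE, where larger pairings $\langle\betacheck,\gamma\rangle$ occur and the rank-one reduction is less clean, establishing this implication in general would require genuinely new input; this is the substance of the open Conjecture~\ref{conj:covers}.
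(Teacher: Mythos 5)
Your skeleton matches the paper's ADE proof at every joint: the forward direction from strict compatibility (Theorem \ref{thm:compatibility-of-ell-with-order}), the reduction of the converse via Theorem \ref{thm-length-formula-for-sbeta-inversion-set} to showing that $\#\Inv^{++}_{\pi^\mu w}(s_{\beta[n]})\geq 2$ forces an intermediate element, the dichotomy according to whether some extra inversion has classical part different from $\beta$, and the final reduction to an affine $SL_2$ computation. But beyond the (correct) easy direction, what you have written is a plan, not a proof, and the two places where all the content lives are missing. First, when $\theta[s,m]\in\Inv^{++}_x(s_{\beta[r,n]})$ with $\theta\neq\beta$, you set $z=xs_{\theta[s,m]}$ and say the crux is $z\in\WTits$ and $z<y$, but you give no mechanism for $z<y$. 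Note $z^{-1}y=s_{\theta[s,m]}s_{\beta[r,n]}$ is not a reflection, so $z<y$ cannot be a single step of the order; the paper's Proposition \ref{prop:not-cover} proves it with the three-term chain $x\rightarrow xs_{\theta[s,m]}\rightarrow xs_{\beta[r,n]}s_{\theta[s,m]}\rightarrow xs_{\beta[r,n]}$, and the step that makes this chain work is the computation $s_{\theta[s,m]}(\beta[r,n])=\beta[r,n]-\theta[s,m]=-s_{\beta[r,n]}(\theta[s,m])>0$, which is exactly where Lemma \ref{lem:pairings-in-finite-ade} enters (the pairing $\langle\theta[s,m],\beta[r,n]\rangle$ is forced to equal $1$). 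Your sketch cites the ADE pairing bound but never runs this argument, which is the actual mechanism.

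Second, your proposed tool for the rank-one case is aimed at a non-issue. Membership of the intermediate elements in $\WTits$ is automatic here: writing $x=\pi^{\ell\Lambda_0}\pi^\mu t^\nu w$ with level $\ell>0$, every real coroot pairs to zero with $\delta$, so the translation part of any $xs_{\gamma[m]}$ has the same positive level as that of $x$ and hence lies in the Tits cone; no convexity argument is needed, and none of the paper's ADE analysis concerns Tits-cone membership. The genuine difficulty is comparability, i.e.\ verifying the positivity conditions $x(\cdot)>0$ for each link of a chain, and there an arbitrary extra inversion $\gamma[m]\neq\beta[n]$ will not do: the paper's rank-one section (Proposition \ref{prop:sl-two-cover-classification}) requires a careful choice, e.g.\ taking $c$ maximal with $x(\beta[r+c,n-c\ell])>0$ after first proving that $\Inv^{++}_x(s_{\beta[r,n]})$ lies on a line of slope $-\ell$ (Proposition \ref{prop:inv-on-slope-of-neg-ell}), together with separate boundary cases such as $\sigma(r,n-1)=-1$. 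Nothing in your sketch substitutes for this case analysis, so the hard direction remains unproved even in ADE type; and, as you note, outside untwisted affine ADE the statement is genuinely open.
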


Below we will give a positive answer to this question when $\bG$ is of untwisted affine ADE type. We will proceed by explicitly computing double-affine roots. 
%By Theorem~\ref{thm-length-formula-for-sbeta-inversion-set}, we know that if $x\lhd y$ then $\ell(y)-\ell(x)$ must be an odd positive integer, since the set on the right-hand side carries an involution (namely, $\iota=-s_{\beta[n]}$) with exactly one fixed point. 

\subsection{Explicit description of double-affine roots when $\bG$ is untwisted affine}

Let $\bG_0$ be a finite-type Kac-Moody group, and let $\bG$ be the untwisted affinization of $\bG_0$. Let us write $\delta$ for the minimal imaginary root for $\bG$. Now we refer to $\bG$-affine roots as double-affine roots.

Let $\beta$ be a positive root for $\bG_0$. Then for every pair $(r,n) \in \ZZ^2$, we define
\begin{align}
 \beta[r,n] = \sigma(r,n)(\beta + r\delta + n \pi) 
\end{align}
where the sign $\sigma(r,n) \in \{\pm1\}$ is defined to make the above expression a positive double-affine root. Explicitly, we define
\begin{align}
 \sigma(r,n) =
  \begin{cases}
    +1 & \textif n > 0 \textor n=0 \textand r\geq0 \\
    -1 & \textif n < 0 \textor n=0 \textand r<0. 
  \end{cases}
\end{align}

One immediate benefit of this indexing is the following simple formula:
\begin{align}
 s_{\beta[r,n]} = \pi^{nr\delta} \pi^{n\betacheck} t^{r\betacheck} s_\beta.
\end{align}

\begin{Caution}
 It is almost true that:
 \begin{align}
   \label{eq:38}
   \sigma(-x,-y) = -\sigma(x,y).
 \end{align}
 But this is not true when $(x,y)=(0,0)$.
\end{Caution}

\subsubsection{The action of reflections}

Let us compute
\begin{align}
  \label{eq:reflection-formula}
   s_{\beta[r,n]}(\beta[s,m]) 
   &= \beta[s,m] - 2 \sigma(s,m) \sigma(r,n) \beta[r,n]\\
   &= \sigma(s,m) ( \beta + s \delta + m \pi - 2( \beta + r \delta + n \pi) )\notag\\
   &= -\sigma(s,m) ( \beta + (2r-s)\delta + (2n-m)\pi)\notag\\
   &= -\sigma(s,m) \sigma(2r-s,2n-m) \beta[2r-s,2n-m]\notag.
\end{align}
Therefore, we have
\begin{align}
  |s_{\beta[r,n]}| (\beta[s,m]) = \beta[2r-s,2n-m].
\end{align}
That is, we can say that the action of $|s_{\beta[r,n]}|$ on pairs of integers indexing double-affine real roots $\beta[s,m]$ is exactly $180^\circ$ rotation about the point $(r,n)$. In particular this is true for the map $\iota$, which is the restriction of $|s_{\beta[r,n]}|$ to $\Inv(s_{\beta[r,n]})$.

\subsubsection{The action of arbitrary $x$ on double-affine roots}

Let $x \in \WTits$. Then we can write
\begin{align}
\label{eq:2}
x = \pi^{\ell \Lambda_0} \pi^{\mu} t^{\nu} w
\end{align}
where $\ell > 0$, $\mu$ and $\nu$ are finite coweights for $\bG_0$, and $w \in W_{\bG_0}$ (the finite Weyl group associated to $\bG_0$).

Then: 
\begin{align}
  \label{eq:1}
x(\beta[s,m]) &= \pi^{\ell \Lambda_0} \pi^{\mu} t^{\nu} w ( \beta[s,m])\\
              &=\sigma(s,m) (w(\beta) + (s + \langle \nu, w(\beta) \rangle) \delta + (m + \langle \mu, w(\beta) \rangle) + \ell  (s + \langle \nu, w(\beta) \rangle) ) \pi).\notag
\end{align}
Let us write $a = -\langle \nu, w(\beta) \rangle$ and $b = -\langle \mu, w(\beta) \rangle$. Then \eqref{eq:1} is equal to:
\begin{align}
  \label{eq:3}
&\sigma(s,m) (w(\beta) + (s - a) \delta + (m - b) + \ell  (s - a) ) \pi)\\
&=  \begin{cases}
\sigma(s,m) \cdot \sigma(s-a,m-b+\ell(s-a)) \cdot  w(\beta)[s-a, m-b + \ell(s-a)] & \textif{w(\beta) > 0} \\
\sigma(s,m) \cdot \sigma(a-s,b-m+\ell(a-s)) \cdot  -w(\beta)[a-s, b-m + \ell(a-s)] & \textif{w(\beta) < 0}. 
  \end{cases}\notag
\end{align}

\subsection{Untwisted affine ADE}
We will prove the following:

\begin{Theorem}
  \label{thm:ade-covering}
  Let $\bG_0$ be a simply-laced finite-type Kac-Moody group (i.e., ADE type). Let $\bG$ be its untwisted affinization. Then Conjecture \ref{conj:covers} is true for $\bG$.
 \end{Theorem}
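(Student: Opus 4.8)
The plan is to prove the two directions of Conjecture~\ref{conj:covers} separately, with only one of them requiring the simply-laced hypothesis. The implication ``$x < y$ and $\ell(y) = \ell(x)+1$ $\Rightarrow$ $x \lhd y$'' holds for \emph{every} Kac-Moody $\bG$ and is immediate from Theorem~\ref{thm:compatibility-of-ell-with-order}: any $z$ with $x < z < y$ would force $\ell(x) < \ell(z) < \ell(y) = \ell(x)+1$, which is impossible in $\ZZ$. So the entire content lies in the reverse implication: if $x \lhd y$ then $\ell(y) = \ell(x) + 1$.

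First I would reduce this to a statement about the size of a single inversion set. Since $x \lhd y$ forces $y = x\, s_{\beta[r,n]}$ for some positive double-affine root (by the definition of the order), and since $x < y$, Theorem~\ref{thm-length-formula-for-sbeta-inversion-set} gives $\ell(y) - \ell(x) = \# \Inv^{++}_{x}(s_{\beta[r,n]}) \geq 1$. Thus the goal becomes: \emph{whenever $\# \Inv^{++}_{x}(s_{\beta[r,n]}) \geq 2$, there exists $z \in \WTits$ with $x < z < y$}, contradicting the covering relation. Given $\gamma[s,m] \in \Inv^{++}_{x}(s_{\beta[r,n]})$ distinct from $\beta[r,n]$ (which always lies in the set), the natural candidate is $z = x\, s_{\gamma[s,m]}$: the defining conditions give $x(\gamma[s,m]) > 0$, hence $z > x$ at once, while the partner root $\theta := -s_{\beta[r,n]}(\gamma[s,m])$ is positive with $x(\theta) > 0$ and $y(\theta) = -x(\gamma[s,m]) < 0$, so $y\, s_{\theta} < y$ produces an element just below $y$. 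The real task is to upgrade these into an honest chain $x < z < y$.

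To carry this out I would use the explicit affine-ADE description of roots. Writing $x = \pi^{\ell \Lambda_0}\pi^\mu t^\nu w$ as in~\eqref{eq:2}, the action formulas~\eqref{eq:1}--\eqref{eq:3} turn the three conditions defining $\Inv^{++}_{x}(s_{\beta[r,n]})$ into explicit inequalities on the indices $(s,m)$, organized by the $\bG_0$-classical part $\gamma$ of each root and by the quantities $a = -\langle\nu, w(\beta)\rangle$, $b = -\langle\mu, w(\beta)\rangle$, together with the level $\ell$. Here simply-lacedness is decisive: every pairing $\langle\betacheck,\gamma\rangle$ lies in $\{0,\pm 1,\pm 2\}$, with $\pm 2$ occurring only for $\gamma = \pm\beta$, and roots with $\langle\betacheck,\gamma\rangle = 0$ are fixed by $s_{\beta[r,n]}$ and so never appear in the inversion set. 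This confines the genuinely two-dimensional behavior to the $\gamma = \beta$ stratum, where — as recorded after~\eqref{eq:reflection-formula} — the map $|s_{\beta[r,n]}|$ acts on index pairs $(s,m)$ by $180^\circ$ rotation about $(r,n)$. That stratum is exactly the double-affine combinatorics of the $SL_2$ attached to $\beta$, the affine $SL_2$ computation promised in the introduction; the $\langle\betacheck,\gamma\rangle = \pm 1$ strata contribute only in a rank-one fashion which I expect cannot obstruct the construction of $z$.

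The main obstacle is this final explicit $SL_2$ computation: given $\# \Inv^{++}_{x}(s_{\beta[r,n]}) \geq 2$, one must actually exhibit $z$ and \emph{verify} $z < y$, not merely $z > x$. The subtlety is that $s_{\gamma[s,m]}\, s_{\beta[r,n]}$ is never a single reflection, so $z < y$ cannot follow from one covering step; instead I would factor the ``rotation/translation'' $s_{\gamma[s,m]}\, s_{\beta[r,n]}$ inside the rank-two ($\beta$-)subsystem into a chain of reflection steps, checking at each step that one remains in $\WTits$ (the Tits-cone conditions $\mu, \mu + n\, w(\betacheck) \in \TitsCone$) and that the order strictly increases up to $y$. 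Because the inversion sets here are genuinely infinite and $\ell$ may be negative, these verifications must be performed through the explicit inequalities rather than by Coxeter-theoretic chain arguments; this is precisely the computation that the reduction is designed to make tractable, and it is where the simply-laced bound on pairings is used in an essential way.
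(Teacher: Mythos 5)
You follow the same skeleton as the paper: the easy direction is immediate from Theorem \ref{thm:compatibility-of-ell-with-order}; by Theorem \ref{thm-length-formula-for-sbeta-inversion-set} the hard direction reduces to showing that $\#\Inv^{++}_{x}(s_{\beta[r,n]}) \geq 2$ rules out a covering relation; and the ADE pairing bound splits this into the stratum containing some $\gamma[s,m]$ with $\gamma \neq \beta$ and the rank-one stratum. But both substantive halves are left as declarations of intent, and the division of difficulty is misjudged. The case $\gamma \neq \beta$, which you wave off with ``I expect cannot obstruct,'' is precisely the paper's Proposition \ref{prop:not-cover} and needs an argument; moreover it is the \emph{only} place where simply-lacedness enters (the rank-one case is the same affine $SL_2$ computation in every type, so deferring the ``essential use'' of the ADE bound to that case is backwards). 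You assemble the right pieces --- $z = x s_{\gamma[s,m]} > x$ and $y s_{\theta} < y$ with $\theta = -s_{\beta[r,n]}(\gamma[s,m])$ --- but never connect them. The connection is short: $\gamma[s,m] \in \Inv(s_{\beta[r,n]})$ forces $\langle \gamma[s,m], \beta[r,n] \rangle > 0$, hence $= 1$ by Lemma \ref{lem:pairings-in-finite-ade}, so $s_{\gamma[s,m]}(\beta[r,n]) = \beta[r,n] - \gamma[s,m] = -s_{\beta[r,n]}(\gamma[s,m]) = \theta$; consequently $z s_{\beta[r,n]} = y s_{\theta}$ and $z(\beta[r,n]) = x(\theta) > 0$, which yields the honest chain $x < z < z s_{\beta[r,n]} = y s_{\theta} < y$ and finishes this case (it is the mirror image of the paper's chain $x < x s_{\gamma[s,m]} < x s_{\beta[r,n]} s_{\gamma[s,m]} < y$). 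Without this step your two inequalities are dangling endpoints, not a chain, and your remark that $s_{\gamma[s,m]} s_{\beta[r,n]}$ ``is never a single reflection'' is a red herring once the chain is routed this way.

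The rank-one case is the larger gap. ``Factor $s_{\gamma[s,m]} s_{\beta[r,n]}$ into reflection steps, checking positivity and Tits-cone membership at each step'' is a plan, not a proof, and it is where the bulk of the paper's work lies (Proposition \ref{prop:sl-two-cover-classification} and the whole rank-one section). The chains constructed there are generally \emph{not} factorizations of $s_{\gamma[s,m]} s_{\beta[r,n]}$ for an element $\beta[s,m]$ of the inversion set: they pass through auxiliary reflections chosen by case-dependent extremal conditions, e.g. $s_{\beta[r,-1]}$ in Case 2, or $s_{\beta[r+c,n-c\ell]}$ with $c$ maximal such that $x(\beta[r+c,n-c\ell]) > 0$ in Case 3 --- the latter only after Proposition \ref{prop:inv-on-slope-of-neg-ell} shows that in that case $\Inv^{++}_{x}(s_{\beta[r,n]})$ lies on a line of slope $-\ell$, itself a nontrivial analysis of the boundary versus interior of the region $\{(s,m) : x(\beta[s,m]) > 0\}$. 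Your outline gives no mechanism for discovering this case structure (Cases 1--3 and their subcases), so the proposal stops exactly where the real difficulty begins.
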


 Let us write $(\cdot,\cdot)$ for the Weyl-invariant Euclidean inner product on the root space of $\bG_0$ such that all roots have square length of $2$. Then for any pair $\beta$ and $\theta$ of positive roots for $\bG_0$, we have:
 \begin{align}
   \label{eq:16}
 \langle \beta^\vee, \theta \rangle = ( \beta, \theta  ) 
 \end{align}
 the right hand side of this equation. Below, we will abuse notation and simple write $\langle \beta, \theta \rangle$ for the pairing $( \beta, \theta )$.

The following well-known fact is crucial for our argument.
\begin{Lemma}
  \label{lem:pairings-in-finite-ade}
  Let $\theta$ and $\beta$ be distinct positive roots for $\bG_0$ which is finite-type ADE. Then:
  \begin{align}
    \label{eq:9}
     \langle \beta, \theta \rangle  \in \{-1, 0, 1 \}. 
  \end{align}
\end{Lemma}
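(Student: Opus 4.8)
The plan is to prove this as a statement purely about the finite root system of ADE type, since the inequality $\langle \beta, \theta \rangle \in \{-1,0,1\}$ concerns only the finite roots $\beta, \theta$ of $\bG_0$ and the Weyl-invariant inner product normalized so that $(\gamma,\gamma)=2$ for every root $\gamma$. First I would record that under this normalization the claimed pairing is exactly the Euclidean inner product $(\beta,\theta)$, and that for any two roots the Cauchy--Schwarz inequality gives $(\beta,\theta)^2 \le (\beta,\beta)(\theta,\theta) = 4$, so $(\beta,\theta) \in \{-2,-1,0,1,2\}$ a priori. The only thing to rule out is $(\beta,\theta) = \pm 2$ when $\beta \ne \theta$.

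The key step is to eliminate the extremal values $\pm 2$. I would argue that equality in Cauchy--Schwarz forces $\beta$ and $\theta$ to be parallel, i.e.\ $\theta = c\beta$ for some scalar $c$; comparing square lengths (both equal $2$) gives $c = \pm 1$, so $\theta = \pm\beta$. Since both $\beta$ and $\theta$ are \emph{positive} roots, $\theta = -\beta$ is impossible, leaving $\theta = \beta$, which contradicts the hypothesis that $\beta$ and $\theta$ are distinct. Hence $(\beta,\theta) = \pm 2$ cannot occur, and we are left with $\langle \beta, \theta\rangle \in \{-1,0,1\}$. An equivalent and perhaps cleaner finite-root-system argument is to invoke the general fact that in any root system $\langle \beta^\vee, \theta\rangle \langle \theta^\vee, \beta\rangle \in \{0,1,2,3\}$, and that in the simply-laced case all roots have equal length so $\langle \beta^\vee,\theta\rangle = \langle \theta^\vee,\beta\rangle$; this makes the product a perfect square, forcing it to be $0$ or $1$, hence $\langle\beta^\vee,\theta\rangle \in \{-1,0,1\}$ once $\theta \ne \pm\beta$, with $\theta = -\beta$ again excluded by positivity.

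The main (and really only) obstacle is making sure the distinctness and positivity hypotheses are used correctly to exclude $\pm 2$: the value $+2$ genuinely occurs for $\theta = \beta$ and the value $-2$ for $\theta = -\beta$, so the statement is sharp, and the proof must pin down that these are the \emph{only} ways to achieve $|(\beta,\theta)| = 2$. The Cauchy--Schwarz route handles this transparently because the equality case of Cauchy--Schwarz is exactly proportionality. Since this is a standard feature of simply-laced root systems, the argument is short; I would keep it self-contained rather than citing a table, so that the normalization $(\gamma,\gamma) = 2$ and the positivity assumption are visibly the inputs that yield the three-value conclusion.
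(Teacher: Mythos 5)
Your proof is correct, but there is nothing in the paper to compare it against: the authors state this lemma as ``the following well-known fact'' and give no proof whatsoever, simply invoking it in the proof of Proposition~\ref{prop:not-cover}. So your write-up supplies an argument the paper omits. The Cauchy--Schwarz route is sound: with the normalization $(\gamma,\gamma)=2$ the pairing $\langle \beta^\vee,\theta\rangle$ equals $(\beta,\theta)$, positive-definiteness of the invariant form in finite type means equality in Cauchy--Schwarz forces proportionality, and equal lengths plus positivity plus distinctness then exclude $\theta=\pm\beta$, killing the values $\pm 2$. One small point you should make explicit rather than implicit: the conclusion $(\beta,\theta)\in\{-2,-1,0,1,2\}$ needs integrality of the pairing, which is the crystallographic axiom $\langle\beta^\vee,\theta\rangle\in\ZZ$ of root systems; Cauchy--Schwarz alone only gives $|(\beta,\theta)|\le 2$. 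Your second route --- that $\langle\beta^\vee,\theta\rangle\langle\theta^\vee,\beta\rangle\in\{0,1,2,3\}$ for non-proportional roots, and that in the simply-laced case the two factors coincide, so their product is a perfect square and hence $0$ or $1$ --- is equally standard and arguably cleaner, since integrality is built into the cited fact. Either version is a complete and correct proof of the lemma.
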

%\begin{proof}
% Because $\theta$ and $\beta$ have the same square length of $2$, $\langle \beta, \theta \rangle$ is equal to $2$ times the cosine of the angle between $\theta$ and $\beta$. Because $\theta$ and $\beta$ are distinct and both positive, we must have:
% \begin{align}
%   \label{eq:11}
%   -2 < \langle \beta, \theta \rangle < 2
% \end{align}
% The lemma follows then from the fact that $\langle \beta, \theta \rangle \in \ZZ$.
%\end{proof}

In order to prove Theorem~\ref{thm:ade-covering}, we need to show that for $x \in \WTits$ and $\beta[r,n]$ a double-affine real root, $x \lhd x s_{\beta[r,n]}$ implies $\# \Invplusplusx(s_{\beta[r,n]}) = 1$. Let us suppose that $x \rightarrow x s_{\beta[r,n]}$, and $\ell(x s_{\beta[r,n]}) - \ell(x) > 1$; we will show that $x s_{\beta[r,n]}$ is not a cover of $x$. This will be accomplished in Propositions~\ref{prop:not-cover} and \ref{prop:sl-two-cover-classification}.

Given double-affine roots $\beta[r,n]$ and $\theta[s,m]$ in ADE type, let us define:
\begin{align}
  \label{eq:31}
  \langle \beta[r,n], \theta[s,m] \rangle = \sigma(r,n)\sigma(s,m)  \langle \beta, \theta \rangle 
\end{align}
We then have the following lemma.
\begin{Lemma}
  \label{lem:reflection-formula-with-pairing}
  For double-affine roots $\beta[r,n]$ and $\theta[s,m]$ in ADE type we have.
  \begin{align}
    \label{eq:32}
  s_{\beta[r,n]}(\theta[s,m]) = \theta[s,m] - \langle \theta[s,m], \beta[r,n] \rangle \beta[r,n]
  \end{align}
\end{Lemma}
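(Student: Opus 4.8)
The plan is to recognize the asserted identity as the simply-laced repackaging of the general reflection formula. Since $\beta[r,n]$ is a (double-affine) real root, the reflection $s_{\beta[r,n]}$ acts on the root lattice by the standard rule $s_\alpha(\lambda) = \lambda - \langle \lambda, \alpha^\vee\rangle\, \alpha$ with $\alpha = \beta[r,n]$ and $\lambda = \theta[s,m]$. Thus the entire content of the lemma is the identification of the scalar coefficient
\[
\langle \theta[s,m],\, (\beta[r,n])^\vee \rangle = \langle \theta[s,m], \beta[r,n]\rangle,
\]
where the right-hand side is the pairing defined in \eqref{eq:31}. Once this single scalar identity is in hand, the displayed formula \eqref{eq:32} follows immediately.

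To compute the coroot pairing I would use the invariant symmetric bilinear form on the double-affine root space coming from the affinization: in it both $\delta$ and $\pi$ lie in the radical, and it restricts to the Weyl-invariant form $(\cdot,\cdot)$ on the finite roots of $\bG_0$ normalized so that $(\beta,\beta) = 2$. Consequently every double-affine real root $\beta[r,n] = \sigma(r,n)(\beta + r\delta + n\pi)$ again has square length $(\beta[r,n],\beta[r,n]) = (\beta,\beta) = 2$, so the coroot pairing collapses to the form itself:
\[
\langle \theta[s,m], (\beta[r,n])^\vee\rangle = \frac{2\,(\theta[s,m], \beta[r,n])}{(\beta[r,n],\beta[r,n])} = (\theta[s,m], \beta[r,n]).
\]
Expanding the two roots and using that $\delta,\pi$ are in the radical gives $(\theta[s,m],\beta[r,n]) = \sigma(s,m)\sigma(r,n)(\theta,\beta) = \sigma(s,m)\sigma(r,n)\langle\theta,\beta\rangle$, which is exactly $\langle\theta[s,m],\beta[r,n]\rangle$ by \eqref{eq:31}.

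As a consistency check, specializing to $\theta = \beta$ recovers $s_{\beta[r,n]}(\beta[s,m]) = \beta[s,m] - 2\sigma(s,m)\sigma(r,n)\beta[r,n]$, agreeing with the direct computation \eqref{eq:reflection-formula}. An alternative, entirely internal route avoids the invariant form: translate $\beta[r,n]$ and $\theta[s,m]$ into the single-$\bG$-affine indexing and apply the reflection formula $s_{\mathfrak{b}[\mathfrak{n}]}(\gamma[\mathfrak{m}]) = \gamma[\mathfrak{m}] - \sgn(\mathfrak{m})\sgn(\mathfrak{n})\langle\betacheck,\gamma\rangle\,\mathfrak{b}[\mathfrak{n}]$ from Section~\ref{sec:notation}, using that the affine coroot pairing $\langle(\beta + r\delta)^\vee, \theta + s\delta\rangle$ is independent of $r,s$ and equals $(\beta,\theta)$. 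The main obstacle on this route is the sign bookkeeping: the normalization $\sigma(r,n)$ used for double-affine roots differs from the single-affine $\sgn$ precisely on the boundary locus $n=0$ flagged in the Caution, so one must verify across the relevant sign cases that the product $\sgn(\mathfrak{m})\sgn(\mathfrak{n})$ together with the sign of the translated coroot pairing reassembles into $\sigma(s,m)\sigma(r,n)$. The invariant-form argument sidesteps this bookkeeping, and it is the route I would write up.
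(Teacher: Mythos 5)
Your proposal is correct, with one caveat worth flagging. Note first that the paper offers no proof of this lemma at all: it is stated as immediate from the definitions, the only supporting computation being \eqref{eq:reflection-formula}, which is precisely your ``alternative, entirely internal route'' carried out in the special case $\theta=\beta$ (translate the reflection formula of Section~\ref{sec:notation} into the $(r,n)$-indexing and track $\sigma$ versus $\sgn$). Your primary route via the invariant form is a genuinely cleaner packaging of that computation: because the form is bilinear with $\delta$ and $\pi$ in its radical, all sign normalizations --- $\sigma$ versus $\sgn$, and the choice of positive $\bG$-classical representative $\pm(\beta+r\delta)$ --- fall out automatically, whereas the internal route requires checking that these signs cancel in pairs (they do: if $b=\epsilon(\beta+r\delta)$ is the positive $\bG$-classical part of $\beta[r,n]$, then $\epsilon$ enters once through the relation between $\sgn$ and $\sigma(r,n)$ and once through the pairing against $b^\vee$, so $\epsilon^2=1$ disappears). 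The caveat: the double-affine roots do \emph{not} form a Kac-Moody root system, and $s_{\beta[r,n]}$ is an ad hoc element of $W_P$, so you cannot simply cite ``the standard rule'' $s_\alpha(\lambda)=\lambda-\langle\lambda,\alpha^\vee\rangle\alpha$ for an abstractly invoked coroot $(\beta[r,n])^\vee$ --- as written, that opening step assumes essentially what is to be proved. Instead, anchor it in the formula $s_{\beta[n]}(\gamma[m]) = \gamma[m]- \sgn(m)\sgn(n)\langle\betacheck,\gamma\rangle\beta[n]$ stated in Section~\ref{sec:notation}; your form computation then identifies its coefficient, since in untwisted simply-laced affine type $\langle b^\vee,\gamma\rangle=(b,\gamma)$ for single-affine real roots, and bilinearity of the form (with $\delta,\pi$ in the radical) converts $\sgn(m)\sgn(n)(b,\gamma)$ into $\sigma(s,m)\sigma(r,n)(\theta,\beta)$, which is exactly the pairing \eqref{eq:31}. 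With that anchoring your argument is complete and non-circular, and your consistency check against \eqref{eq:reflection-formula} confirms the normalization.
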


\begin{Proposition}\label{prop:not-cover}
  Suppose $\bG_0$ is simply-laced, $x \rightarrow x s_{\beta[r,n]}$, and 
$\ell(x s_{\beta[r,n]}) - \ell(x) > 1$. Suppose further, there exist some finite root $\theta$ such that $\theta \neq \beta$ and a pair $(s,m) \in \ZZ^2$ such that:
\begin{align}
  \label{eq:8}
  \theta[s,m] \in \Invplusplusx(s_{\beta[r,n]})
\end{align}
Then $x s_{\beta[r,n]}$ is not a cover of $x$.
\end{Proposition}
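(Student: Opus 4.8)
The plan is to exhibit an element $z$ with $x < z < x s_{\beta[r,n]}$; since a necessary condition for $x \lhd x s_{\beta[r,n]}$ is the emptiness of the open interval, producing such a $z$ shows $x s_{\beta[r,n]}$ is not a cover of $x$. The key is that the hypothesis forces $\theta$ and $\beta$ to generate an $A_2$ subsystem, through which I can factor $s_{\beta[r,n]}$.

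First I would pin down the rank-two combinatorics. Since $\theta[s,m] \in \Invplusplusx(s_{\beta[r,n]}) \subseteq \Inv(s_{\beta[r,n]})$, the positive double-affine root $\theta[s,m]$ is sent to a negative root by $s_{\beta[r,n]}$, so by Lemma~\ref{lem:reflection-formula-with-pairing} the pairing $\langle \theta[s,m], \beta[r,n]\rangle$ is strictly positive. Because $\theta \neq \beta$ and $\bG_0$ is ADE, Lemma~\ref{lem:pairings-in-finite-ade} gives $\langle \theta, \beta\rangle \in \{-1,0,1\}$, and hence $\langle \theta[s,m], \beta[r,n]\rangle = 1$. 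Thus $\alpha := -s_{\beta[r,n]}(\theta[s,m]) = \beta[r,n] - \theta[s,m]$ is again a positive double-affine root, and $\{\theta[s,m], \alpha, \beta[r,n]\}$ spans an $A_2$ subsystem with $\beta[r,n] = \theta[s,m] + \alpha$. Using only linearity of the reflection action together with the $A_2$ relations among these three roots, I obtain $s_{\theta[s,m]}(\alpha) = \beta[r,n]$, $s_\alpha(\theta[s,m]) = \beta[r,n]$, and the braid identity $s_{\theta[s,m]}\, s_\alpha\, s_{\theta[s,m]} = s_{\beta[r,n]}$.

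Next I would read off the signs needed to build a chain. Membership $\theta[s,m] \in \Invplusplusx(s_{\beta[r,n]})$ gives $x(\theta[s,m]) > 0$ and $x\big(-s_{\beta[r,n]}(\theta[s,m])\big) = x(\alpha) > 0$, while $x s_{\beta[r,n]} > x$ gives $x(\beta[r,n]) > 0$. Set $z_1 = x s_{\theta[s,m]}$ and $z_2 = z_1 s_\alpha = x s_{\theta[s,m]} s_\alpha$. I claim that each of the three relations $x < z_1 < z_2 < x s_{\beta[r,n]}$ is a single reflection step. Indeed $x < z_1$ because $x(\theta[s,m]) > 0$; then $z_1(\alpha) = x\big(s_{\theta[s,m]}(\alpha)\big) = x(\beta[r,n]) > 0$ gives $z_1 < z_2$; and $z_2(\theta[s,m]) = x\big(s_{\theta[s,m]} s_\alpha(\theta[s,m])\big) = x(\alpha) > 0$ gives $z_2 < z_2 s_{\theta[s,m]}$, where the braid identity identifies $z_2 s_{\theta[s,m]} = x s_{\beta[r,n]}$. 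Strictness is automatic from Theorem~\ref{thm-length-formula-for-sbeta-inversion-set}, since $\ell$ strictly increases at each step, so $z_1$ lies strictly between $x$ and $x s_{\beta[r,n]}$ and we are done.

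The one genuinely delicate point, which I expect to be the main obstacle, is verifying that the intermediates $z_1$ and $z_2$ actually lie in $\WTits$: the order is defined only there, and membership in the semigroup is not a formal consequence of positivity. I would control this through the level, i.e. the pairing of the coweight part with the minimal imaginary root $\delta$. Writing $x = \pi^{\ell\Lambda_0}\pi^\mu t^\nu w$ with $\ell > 0$ as in~\eqref{eq:2}, the translation parts of $s_{\theta[s,m]}$ and $s_\alpha$ are built from $\delta$ and finite coroots, all of level zero; since $W$ fixes $\delta$, right multiplication by these reflections preserves the level of the coweight part. Hence every element of the chain has level $\ell > 0$, so its coweight part lies in $\TitsCone$ and the element lies in $\WTits$. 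With this in place the three single-reflection relations are legitimate relations in the Bruhat order, and the argument is complete; I would also note in passing that $\alpha \in \Invplusplusx(s_{\beta[r,n]})$, so the same construction can be iterated to build a saturated chain if desired.
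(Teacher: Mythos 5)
Your proof is correct and is essentially the paper's own argument: your chain $x < x s_{\theta[s,m]} < x s_{\theta[s,m]} s_\alpha < x s_{\beta[r,n]}$ is literally the paper's chain $x \rightarrow x s_{\theta[s,m]} \rightarrow x s_{\beta[r,n]} s_{\theta[s,m]} \rightarrow x s_{\beta[r,n]}$, since the braid identity gives $x s_{\beta[r,n]} s_{\theta[s,m]} = x s_{\theta[s,m]} s_{s_{\theta[s,m]}(\beta[r,n])} = x s_{\theta[s,m]} s_\alpha$, and both arguments rest on Lemmas \ref{lem:pairings-in-finite-ade} and \ref{lem:reflection-formula-with-pairing} forcing $\langle \theta[s,m], \beta[r,n]\rangle = 1$ and on the same three positivity checks $x(\theta[s,m])>0$, $x(\beta[r,n])>0$, $x(\alpha)>0$. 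Your explicit level-preservation argument for membership of the intermediate elements in $\WTits$ addresses a point the paper leaves implicit, and it is correct.
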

 
\begin{proof}
%Let $ \gamma[t,p] = - s_{\beta[r,n]} (\theta[s,m])$.
We claim that
\begin{align}
  \label{eq:12}
 x \rightarrow x s_{\theta[s,m]} \rightarrow x s_{\beta[r,n]} s_{\theta[s,m]} \rightarrow  x s_{\beta[r,n]} 
\end{align}
is a chain in the Bruhat order.

The relations $x \rightarrow x s_{\theta[s,m]}$ and $x s_{\beta[r,n]} s_{\theta[s,m]} \rightarrow  x s_{\beta[r,n]}$ follow from $\theta[s,m] \in \Invplusplusx(s_{\beta[r,n]})$. So we just need to prove $x s_{\theta[s,m]} \rightarrow x s_{\beta[r,n]} s_{\theta[s,m]} $.

First, we claim that $s_{\theta[s,m]}(\beta[r,n]) > 0$. We know $s_{\beta[r,n]}(\theta[s,m]) < 0$ and
\begin{align}
  \label{eq:13}
  s_{\beta[r,n]}(\theta[s,m]) = \theta[s,m] - \langle \theta[s,m], \beta[r,n] \rangle \beta[r,n]
\end{align}
by Lemma \ref{lem:reflection-formula-with-pairing}.
This implies:
\begin{align}
  \label{eq:14}
  \langle \theta[s,m], \beta[r,n] \rangle > 0
\end{align}
By Lemma \ref{lem:pairings-in-finite-ade}, we must have:
\begin{align}
  \label{eq:10}
  \langle \theta[s,m], \beta[r,n] \rangle = 1
\end{align}
So we compute:
\begin{align}
  \label{eq:20}
  s_{\theta[s,m]}(\beta[r,n]) = \beta[r,n] - \langle \beta[r,n], \theta[s,m] \rangle \theta[s,m] = \beta[r,n] - \theta[s,m] = 
- s_{\beta[r,n]}(\theta[s,m]) > 0
\end{align}
Here we use the fact that $ \langle \theta[s,m], \beta[r,n] \rangle = \langle \beta[r,n], \theta[s,m] \rangle$.

Let us compute $x s_{\beta[r,n]} s_{\theta[s,m]} = x s_{\theta[s,m]} s_{\theta[s,m]} s_{\beta[r,n]} s_{\theta[s,m]} = x s_{\theta[s,m]} s_{s_{\theta[s,m]}(\beta[r,n])}$. Because we have shown that $s_{\theta[s,m]}(\beta[r,n]) > 0$, $x s_{\theta[s,m]} \rightarrow x s_{\beta[r,n]} s_{\theta[s,m]} $ if and only if $x s_{\theta[s,m]} s_{\theta[s,m]}(\beta[r,n])) = x(\beta[r,n])$ is positive; this is exactly our initial hypothesis. 
\end{proof}

Therefore, we are now reduced to the ``rank-one'' case when $x \rightarrow x s_{\beta[r,n]}$, $\ell(x s_{\beta[r,n]}) - \ell(x) > 1$, and all elements of $\Invplusplusx(s_{\beta[r,n]})$ are of the form $\beta[s,m]$ for some $(s,m) \in \ZZ^2$. We will handle this in the next section.

% \begin{proof}[Proof of Conjecture~\ref{C:covers} for simply-laced $G$]
% Suppose the set
% \begin{align*}
% \Inv^{++}_x(s_{\beta[n]})=\{\theta[m]\in\Inv(s_{\beta[n]}) : \text{$x(\theta[m])>0$ and $x(-s_{\beta[n]}(\theta[m]))>0$}\}
% \end{align*}
% contains more than one element. Let $\theta[m]$ be an element of this set different from $\beta[m]$. Then we have $x<x':=xs_{\theta[m]}$ since $x(\theta[m])>0$ and $y':=ys_{\iota(\theta[m])}<y$ since $y(\iota(\theta[m]))=-x(\theta[m])<0$. Notice that
% \begin{align*}
% y'=xs_{\beta[n]}s_{\iota(\theta[m])}=xs_{-\theta[m]}s_{\beta[n]}=x's_{\beta[n]}.
% \end{align*}
% We have $x'(\beta[n])=xs_{\theta[m]}(\beta[n])$ and
% \begin{align*}
% s_{\theta[m]}(\beta[n])=\beta[n]-\theta[m]=-s_{\beta[n]}(\theta[m])=\iota(\theta[m])
% \end{align*}
% since $G$ is assumed simply-laced. Therefore $x'(\beta[n])=x(\iota(\theta[m]))>0$ and we have $x<x'<y'<y$. This proves that, unless $\ell(y)-\ell(x)=1$ can always deduce $x<y$ from ``shorter'' relations. This completes the proof.
% \end{proof}

\section{The rank-one case}

Let us consider $x$, $\beta$, and a pair $(r,n) \in \ZZ^2$ as above, and let us suppose that
\begin{align}
  \label{eq:104}
  x(\beta[r,n]) > 0 \\
  \ell(xs_{\beta[r,n]}) - \ell(x) > 1
\end{align}
and all double-affine roots in $\Invplusplusx(s_{\beta[r,n]})$ are of the form $\beta[s,m]$ for some $(s,m) \in \ZZ^2$.
The second condition is equivalent by Theorem \ref{thm-length-formula-for-sbeta-inversion-set} to:
\begin{align}
  \label{eq:105}
  \#\Invplusplusx(s_{\beta[r,n]}) > 1.
\end{align}
To complete the proof of Theorem \ref{thm:ade-covering}, we must prove:
\begin{Proposition}
\label{prop:sl-two-cover-classification}
The element $xs_{\beta[r,n]}$ is \emph{not} a cover of $x$.
\end{Proposition}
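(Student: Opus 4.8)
The plan is to exhibit an element strictly between $x$ and $xs_{\beta[r,n]}$, thereby contradicting that $xs_{\beta[r,n]}$ covers $x$. Mimicking Proposition~\ref{prop:not-cover}, I would try, for a well-chosen $\beta[s_0,m_0]\in\Invplusplusx(s_{\beta[r,n]})$ with $(s_0,m_0)\neq(r,n)$, the three-step chain
\begin{align*}
x \rightarrow x s_{\beta[s_0,m_0]} \rightarrow x s_{\beta[r,n]} s_{\beta[s_0,m_0]} \rightarrow x s_{\beta[r,n]}.
\end{align*}
The outer two relations hold for free: they are exactly the two defining inequalities of $\Invplusplusx(s_{\beta[r,n]})$, namely $x(\beta[s_0,m_0])>0$ and $x(-s_{\beta[r,n]}(\beta[s_0,m_0]))>0$. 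After the conjugation $xs_{\beta[r,n]}s_{\beta[s_0,m_0]}=xs_{\beta[s_0,m_0]}s_{s_{\beta[s_0,m_0]}(\beta[r,n])}$, the middle relation becomes an increasing one precisely when $s_{\beta[s_0,m_0]}(\beta[r,n])>0$, in which case it follows from the standing hypothesis $x(\beta[r,n])>0$. The one new feature compared with Proposition~\ref{prop:not-cover} is that here $\theta=\beta$, so $\langle\beta[s_0,m_0],\beta[r,n]\rangle=\pm2$ and the positivity $s_{\beta[s_0,m_0]}(\beta[r,n])>0$ is not automatic; arranging it is the heart of the matter.

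To locate a usable $(s_0,m_0)$ I would first describe $\Invplusplusx(s_{\beta[r,n]})$ explicitly. Combining the action formula \eqref{eq:1}--\eqref{eq:3} with the inversion-set description \eqref{eq:46}, each of the three conditions defining $\Invplusplusx(s_{\beta[r,n]})$ cuts out a region of the $(s,m)$-lattice, and their intersection is a finite set invariant under the half-turn $(s,m)\mapsto(2r-s,2n-m)$ about $(r,n)$ (which is how $|s_{\beta[r,n]}|$ acts, by the reflection formula). Taking $n>0$ as representative (the remaining cases being analogous via the sign conventions), this set is the lattice points of a parallelogram: the horizontal strip $0\le m\le 2n$ from $\Inv(s_{\beta[r,n]})$ intersected with the slanted strip $0\le m-b+\ell(s-a)\le 2(n-b+\ell(r-a))$ coming from the two positivity conditions, where $a=-\langle\nu,w(\beta)\rangle$ and $b=-\langle\mu,w(\beta)\rangle$ as in \eqref{eq:3}. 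In these coordinates the reflection formula turns $s_{\beta[s_0,m_0]}(\beta[r,n])>0$ into an explicit sign condition on the half-turn image $(2s_0-r,2m_0-n)$ of $(r,n)$ through $(s_0,m_0)$ --- that its $\pi$-level have the sign opposite to that of $(r,n)$, i.e. $m_0<n/2$.

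The key dichotomy is whether $\Invplusplusx(s_{\beta[r,n]})$ spreads over more than one single-affine root $\beta+s\delta$, i.e. more than one value of $s$. A sweeping count along the slanted strip shows that as soon as its width in the $s$-direction is at least one (equivalently $2(n-b+\ell(r-a))\ge\ell$), the parallelogram carries a lattice point at every level $m\in[0,2n]$; in particular it contains a point with $m_0<n/2$, and the three-step chain closes up. More precisely, one checks that the failure of this --- no admissible point with $m_0<n/2$ --- forces $\ell$ so large that only the single column $s=r$ survives. Thus whenever $\Invplusplusx(s_{\beta[r,n]})$ meets two or more values of $s$, the chain produces the desired intermediate element.

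This leaves the genuinely rank-one case, which I expect to be the main obstacle: $\Invplusplusx(s_{\beta[r,n]})$ is supported on a single single-affine root $\alpha=\beta+r\delta$, so its elements are $\alpha[m]=\beta[r,m]$ with $m$ ranging over a symmetric $\pi$-interval about $n$. Here the reflections $s_{\alpha[m]}=\pi^{m\alpha^\vee}s_\alpha$ generate a copy of the affine $SL_2$ Weyl group, products of two of them are pure $\pi$-translations rather than reflections, and the three-step chain degenerates. I would finish this core by the explicit computation promised in the introduction: the order along the $\alpha$-string is the infinite-dihedral Bruhat order of affine $SL_2$, whose covers are exactly the length-one steps, and under the identification of $\ell$ on this string with the Coxeter length (as in the single-affine specialization recalled in the introduction) the hypothesis $\#\Invplusplusx(s_{\beta[r,n]})>1$ forces a length jump exceeding one; an explicit intermediate $xs_{\alpha[n']}$ then exists. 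Throughout, a technical point to discharge is that every element named in these chains --- $xs_{\beta[s_0,m_0]}$, $xs_{\beta[r,n]}s_{\beta[s_0,m_0]}$, and $xs_{\alpha[n']}$ --- actually lies in $\WTits$; this should follow from $\mu,\mu+n w(\betacheck)\in\cT$ and convexity of the Tits cone, but it is where I would expect to spend the most care.
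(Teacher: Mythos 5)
Your template chain $x \rightarrow xs_{\beta[s_0,m_0]} \rightarrow xs_{\beta[r,n]}s_{\beta[s_0,m_0]} \rightarrow xs_{\beta[r,n]}$, usable exactly when some $\beta[s_0,m_0]\in\Inv^{++}_x(s_{\beta[r,n]})$ satisfies $s_{\beta[s_0,m_0]}(\beta[r,n])>0$ (i.e.\ $\sigma(2s_0-r,2m_0-n)=-1$), is correct, and it is what the paper itself uses in its Case 3 (with $(s_0,m_0)=(r+c,n-c\ell)$, $c$ maximal) and in one subcase of its Case 1. The gap is your dichotomy: it is \emph{not} true that failure to find such a point forces $\Inv^{++}_x(s_{\beta[r,n]})$ into the single column $s=r$. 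Concrete counterexample, already in type $A_1$ with simple root $\alpha$: take $x=\pi^{2\Lambda_0}\pi^{3\alpha^\vee}\in\WTits$ and the root $\alpha[-2,1]$, so that $x(\alpha[s,m])=\sigma(s,m)\bigl(\alpha+s\delta+(m+6+2s)\pi\bigr)$. The strip $\Inv(s_{\alpha[-2,1]})$ allows only heights $m\in\{0,1,2\}$, with $m=0$ requiring $s\geq 0$ and $m=2$ requiring $s\leq -4$; at height $1$ the two positivity conditions read $7+2s>0$ and $-1-2s>0$, while at height $0$ the second condition reads $-2s>0$, impossible for $s\geq 0$. Hence $\Inv^{++}_x(s_{\alpha[-2,1]})=\{\alpha[-3,1],\alpha[-2,1],\alpha[-1,1]\}$: three points, spread over three values of $s$, yet all at height $m_0=1$, so none satisfies your sign condition (which here forces $m_0\leq 0$). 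Your template yields no chain, and since the set is multi-column your rank-one fallback does not even claim to apply. The failure happens precisely in the boundary regime of the $\sigma$-conventions ($r<0$, $n=1$) that you dismissed as "analogous via the sign conventions": there the region is not a centrally symmetric parallelogram and the convexity count breaks. The paper's Case 1 (subcase $r<0$) covers exactly this shape with a chain of a different design, $x\rightarrow xs_{\beta[r-c,1]}\rightarrow xs_{\beta[r-c,1]}s_{\beta[r,1]}\rightarrow xs_{\beta[r,1]}$, whose first step uses a point \emph{failing} your sign condition and whose middle step is justified by maximality of $c$ (a negativity statement, $x(\beta[r-2c,1])<0$), not by membership in $\Inv^{++}_x$; likewise its Case 2 chain passes through $xs_{\beta[r,n]}s_{\beta[r,0]}$ even though $\beta[r,0]\notin\Inv^{++}_x(s_{\beta[r,n]})$.

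The single-column fallback is also not a proof. Two elements $xs_{\alpha[n']}$ and $xs_{\alpha[n]}$ are never related by a one-step Bruhat relation: $s_{\alpha[n']}s_{\alpha[n]}$ is a translation, with trivial Weyl component, hence not equal to any reflection, as you yourself observe. So the relation $xs_{\alpha[n']}<xs_{\alpha[n]}$ cannot be read off from any Coxeter-theoretic order on the $\alpha$-string; it has to be manufactured by an explicit chain through translation-type elements, and classifying covers of this rank-one \emph{double}-affine order is exactly the statement under proof, not a fact importable from single-affine $SL_2$. (That is what the paper's reduction to "essentially affine $SL_2$" means: its Cases 1--3 are the rank-one double-affine computation.) One point where you were overly cautious, by contrast: $\WTits$-membership of all intermediate elements is automatic here, since reflections $s_{\beta[s,m]}$ do not change the level $\ell>0$ of the translation part, and positive level already places it in the Tits cone.
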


Let us first make the following simplifying assumptions:
\begin{itemize}
\item $w(\beta) > 0$
\item $\sigma(r,n) > 0$
%\item $\sigma(a,b) > 0$ \dinakar{We want to drop this assumption.}
\end{itemize}
The other situations are handled by arguments similar to what we present below.
We will divide the proof into the following cases.
\begin{itemize}
\item {\bf Case 1:} $\sigma(r,n-1) = -1$
\item {\bf Case 2:} $n > 0$ and $ \beta[r,n-1] \in \Invplusplusx(s_{\beta[r,n]})$
\item {\bf Case 3:} $n > 0$ and $ \beta[r,n-1] \notin \Invplusplusx(s_{\beta[r,n]})$
\end{itemize}

\begin{Remark}
 Supposing that $\sigma(r,n) =1$ and plotting the pairs $(r,n)$ such that $x(\beta[r,n]) > 0$, we get a polyhedral region corresponding to the condition:
 \begin{align}
   \label{eq:48}
 \sigma(r-a,n-b+\ell(r-a)) > 0
 \end{align}
 Case 2 above corresponds to the ``interior'' of that region, while Cases 1 and 3 correspond to the ``boundary'' of that region. 
\end{Remark}

We will use the following, which is evident from \eqref{eq:3}:
\begin{Lemma}
  \label{lem:pos-prop-of-inv-plusplus}
  For all $i,j\geq 0$:
  \begin{align}
  \label{eq:55}
  x(\beta[r+i,n+j]) > 0.
\end{align}
\end{Lemma}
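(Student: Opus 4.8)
The goal is to prove Lemma \ref{lem:pos-prop-of-inv-plusplus}: that $x(\beta[r+i,n+j]) > 0$ for all $i,j \geq 0$, under the standing assumptions $w(\beta)>0$ and $\sigma(r,n)>0$ together with the hypothesis $x(\beta[r,n])>0$. My plan is to read off the positivity condition directly from the explicit action formula \eqref{eq:3}. Since we are in the branch $w(\beta)>0$, the formula expresses $x(\beta[s,m])$ as a positive scalar times $\sigma(s,m)\cdot\sigma(s-a,m-b+\ell(s-a))\cdot w(\beta)[s-a,m-b+\ell(s-a)]$, where $a=-\langle\nu,w(\beta)\rangle$ and $b=-\langle\mu,w(\beta)\rangle$. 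Because $\beta[\cdot,\cdot]$ is by construction always a positive double-affine root, positivity of $x(\beta[s,m])$ is governed entirely by the product of signs $\sigma(s,m)\cdot\sigma(s-a,m-b+\ell(s-a))$.

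First I would record that $\sigma(r,n)>0$, so for the base point the sign product reduces to $\sigma(r-a,n-b+\ell(r-a))$, and the hypothesis $x(\beta[r,n])>0$ forces $\sigma(r-a,n-b+\ell(r-a))=+1$. Next I would observe that for $(s,m)=(r+i,n+j)$ with $i,j\geq 0$ we still have $\sigma(r+i,n+j)=+1$: indeed $\sigma(r,n)=+1$ means either $n>0$, or $n=0$ and $r\geq 0$, and increasing either coordinate by a nonnegative amount preserves this (if $n>0$ then $n+j>0$; if $n=0,r\geq 0$ then either $j>0$ giving $n+j>0$, or $j=0$ giving $r+i\geq 0$). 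So the first sign factor stays $+1$.

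The heart of the argument is then to show $\sigma((r+i)-a,(n+j)-b+\ell((r+i)-a))=+1$. Writing $s-a=(r-a)+i$ and tracking the $\pi$-coordinate, I would compute that this coordinate equals $\bigl(n-b+\ell(r-a)\bigr)+j+\ell i$. Since $\sigma(r-a,n-b+\ell(r-a))=+1$ from the base hypothesis, I split into the two defining cases of $\sigma=+1$: if the $\pi$-coordinate $n-b+\ell(r-a)>0$, then adding the nonnegative quantity $j+\ell i$ (recall $\ell>0$ and $i,j\geq 0$) keeps it strictly positive, so $\sigma=+1$; if instead $n-b+\ell(r-a)=0$ and $r-a\geq 0$, then the new $\pi$-coordinate is $j+\ell i\geq 0$, and when it is zero we also have $i=j=0$ so the $\delta$-coordinate $(r-a)+i=r-a\geq 0$, again giving $\sigma=+1$, while if it is positive we are back in the first case. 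In all cases both sign factors are $+1$, so the product is $+1$ and $x(\beta[r+i,n+j])>0$.

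The only real subtlety — and the step I would treat most carefully — is the edge behavior of $\sigma$ when a $\pi$-coordinate lands exactly on $0$, since that is the single place where the ``add a nonnegative number preserves positivity'' reasoning needs the supplementary $\delta$-coordinate condition; this is precisely the phenomenon flagged in the Caution that $\sigma$ is not simply odd at the origin. Apart from bookkeeping these signs, the lemma is an immediate consequence of \eqref{eq:3}, exactly as the phrase ``evident from \eqref{eq:3}'' suggests, so I expect no genuine obstacle beyond a clean case analysis.
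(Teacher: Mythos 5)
Your proof is correct and takes essentially the same route as the paper: the paper gives no written argument at all, simply asserting the lemma is ``evident from \eqref{eq:3}'', and your sign computation---reducing positivity of $x(\beta[r+i,n+j])$ to the product $\sigma(r+i,n+j)\cdot\sigma(r+i-a,\,n+j-b+\ell(r+i-a))$ and checking both factors stay $+1$ under adding $i,j\geq 0$, with care at the $\pi$-coordinate-equal-to-zero boundary---is exactly the verification the authors leave to the reader. No gaps.
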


\subsection{{\bf Case 1:} $\sigma(r,n-1) = -1$}

There are three subcases.

\subsubsection{Subcase: $r = 0$}
This subcase does not occur because $r=0$ implies that $n=0$ and that $\# \Invplusplusx(s_{\beta[r,n]}) = 1$.

\subsubsection{Subcase: $r > 0$}
    
In this subcase $n=0$. We must have
\begin{align}
  \label{eq:59}
 \beta[r-1,n] \in \Invplusplusx(s_{\beta[r,n]}) 
\end{align}
because otherwise $\# \Invplusplusx(s_{\beta[r,n]}) = 1$.
In this case, 
\begin{align}
  \label{eq:58}
   x \rightarrow x s_{\beta[r-1,n]} \rightarrow x s_{\beta[r-1,n]}s_{\beta[-1,n]} \rightarrow x s_{\beta[r,n]}
\end{align}
is a three-term chain.

\begin{proof}
 
For the first term of the chain, we have $x( \beta[r-1,n]) > 0$ by assumption.

For the second term, we compute
\begin{align}
  \label{eq:61}
x s_{\beta[r-1,n]} (\beta[-1,n]) = x(\beta[2r-1,n]) >0
\end{align}
because of Lemma \ref{lem:pos-prop-of-inv-plusplus} and the fact that $r \geq 1$.

Finally, for the third term, we compute:
\begin{align}
  \label{eq:63}
  xs_{\beta[r-1,n]}s_{\beta[-1,n]}(\beta[0,n])
  &= xs_{\beta[r-1,n]}s_{\beta[-1,n]}s_{\beta[0,n]}( - \beta[0,n])\\
  &= xs_{\beta[r,n]}(- \beta[0,n]) = x(\beta[2r,n])>0.\notag
\end{align}

\end{proof}

\subsubsection{Subcase: $r < 0$}

Note that this subcase implies that $n=1$. Let us state some lemmas.

\begin{Lemma}
  In this subcase, if 
  \begin{align}
    \label{eq:64}
      \# \Invplusplusx(s_{\beta[r,1]}) > 1
  \end{align}
  then either
  \begin{align}
    \label{eq:65}
   \beta[r-1,1] \in \Invplusplusx(s_{\beta[r,1]})
  \end{align}
  or:
  \begin{align}
    \label{eq:66}
   \beta[2r,2] \in \Invplusplusx(s_{\beta[r,1]}).
  \end{align}
\end{Lemma}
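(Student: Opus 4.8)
The plan is to reduce the statement to a one-dimensional monotonicity computation by combining the explicit shape of $\Inv(s_{\beta[r,1]})$ with the positivity conditions coming from \eqref{eq:3}. First I would record the shape of $\Inv(s_{\beta[r,1]})$ among the $\beta$-multiples. Applying \eqref{eq:reflection-formula} and the sign rule for $\sigma$, a root $\beta[s,m]$ lies in $\Inv(s_{\beta[r,1]})$ exactly when $\sigma(s,m)=\sigma(2r-s,2-m)$, and since $r<0$ this set is the disjoint union of the full line $\{(s,1):s\in\ZZ\}$, the ray $\{(s,0):s\ge 0\}$, and the ray $\{(s,2):s\le 2r\}$; on all of these $\sigma(s,m)=+1$. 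I would also recall that $\Invplusplusx(s_{\beta[r,1]})$ is stable under $\iota=|s_{\beta[r,1]}|$, which acts on indices by the $180^\circ$ rotation $(s,m)\mapsto(2r-s,2-m)$ about $(r,1)$, and that the center $\beta[r,1]$ always lies in $\Invplusplusx(s_{\beta[r,1]})$.

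Next I would translate positivity into a linear inequality. Writing $a=-\langle\nu,w(\beta)\rangle$, $b=-\langle\mu,w(\beta)\rangle$, $V(s,m)=m+\ell s$, and $c_1=b+\ell a$, formula \eqref{eq:3} (with $w(\beta)>0$) shows that for $\beta[s,m]\in\Inv(s_{\beta[r,1]})$ one has $x(\beta[s,m])>0$ iff $V(s,m)>c_1$, or $V(s,m)=c_1$ and $s\ge a$; likewise $x(\iota(\beta[s,m]))>0$ iff $V(s,m)<c_2$, or $V(s,m)=c_2$ and $2r-s\ge a$, where $c_2=2+2\ell r-c_1$. Thus $\Invplusplusx(s_{\beta[r,1]})$ is essentially the set of points of $\Inv(s_{\beta[r,1]})$ in the closed strip $c_1\le V\le c_2$, symmetric about the midline $V=\ell r+1$ through $(r,1)$, up to the two boundary values which carry the extra conditions $s\ge a$ and $2r-s\ge a$. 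The crucial point is that $V$ is strictly increasing in each of $s$ and $m$, so positivity of $x(\beta[s,m])$ propagates to any index of larger $V$.

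With this in hand I would argue a dichotomy on the sign of $x(\beta[r-1,1])$. If $x(\beta[r-1,1])>0$, then since $\beta[r\pm1,1]\in\Inv(s_{\beta[r,1]})$, since $x(\beta[r+1,1])>0$ by Lemma~\ref{lem:pos-prop-of-inv-plusplus}, and since $\iota(\beta[r-1,1])=\beta[r+1,1]$, we immediately get $\beta[r-1,1]\in\Invplusplusx(s_{\beta[r,1]})$, the first alternative. Otherwise $x(\beta[r-1,1])<0$; a short squeeze using the monotonicity of $V$ and the $\iota$-symmetry then shows that the only point of $\Invplusplusx(s_{\beta[r,1]})$ on the line $m=1$ is the center $(r,1)$. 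Since $\#\Invplusplusx(s_{\beta[r,1]})>1$, there is an off-line point, and applying $\iota$ (which swaps the rays $m=0$ and $m=2$) produces one on the ray $m=2$, say $\beta[s_*,2]$ with $s_*\le 2r$. Monotonicity of $V$ in $s$ upgrades $x(\beta[s_*,2])>0$ to $x(\beta[2r,2])>0$. For the partner condition $x(\beta[0,0])>0$ (note $\iota(\beta[2r,2])=\beta[0,0]$), I would observe that $\beta[r,1]\in\Invplusplusx(s_{\beta[r,1]})$ forces $c_1\le V(r,1)=\ell r+1\le 0$, using $r\le -1$ and $\ell\ge 1$; hence $c_1<0$, so $V(0,0)=0>c_1$ gives $x(\beta[0,0])>0$, and therefore $\beta[2r,2]\in\Invplusplusx(s_{\beta[r,1]})$, the second alternative.

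The one place where this clean argument breaks is the boundary case $c_1=\ell r+1=0$, which forces $\ell=1$ and $r=-1$ and collapses the strip to the single value $V=0$; there the membership of $\beta[0,0]$ and $\beta[2r,2]=\beta[-2,2]$ hinges on the auxiliary inequalities $s\ge a$, and this is the step I expect to require the most care. I would handle it by a direct check: the only $\Inv$-points with $V=0$ are $\beta[-1,1]$, $\beta[0,0]$, $\beta[-2,2]$; since $\beta[-1,1]\in\Invplusplusx$ already forces $a\le -1$, and since $a=-1$ leaves only the center in $\Invplusplusx(s_{\beta[-1,1]})$, the hypothesis $\#\Invplusplusx(s_{\beta[-1,1]})>1$ can hold only when $a\le -2$, in which regime the conditions for $\beta[-2,2]$ and $\beta[0,0]$ are automatically met and again $\beta[2r,2]\in\Invplusplusx(s_{\beta[r,1]})$. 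Apart from this degenerate case, the proof is the one-dimensional monotonicity computation described above.
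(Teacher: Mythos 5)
Your proof is correct. One important point of comparison: the paper states this lemma \emph{without any proof} (it is asserted bare, between the description of the subcase $r<0$, $n=1$ and the two subsubcases), so there is no argument of the paper to measure yours against; what you have written fills a genuine omission. Your method — encoding the positivity conditions through the linear functional $V(s,m)=m+\ell s$, so that $\Invplusplusx(s_{\beta[r,1]})$ becomes the set of inversion points in the symmetric strip $c_1\le V\le c_2$ (with the extra conditions $s\ge a$, $2r-s\ge a$ on the two boundary lines) — is a rigorous formalization of exactly the picture the paper gestures at in its Remark about the polyhedral region and its ``interior''/``boundary,'' and the quantity $V(s,m)-c_1=m-b+\ell(s-a)$ is the same expression manipulated coordinate-by-coordinate in the paper's Case 3 arguments (e.g.\ in Proposition \ref{prop:inv-on-slope-of-neg-ell}). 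I verified the individual steps: the description of $\Inv(s_{\beta[r,1]})$ as the line $m=1$ plus the rays $\{(s,0):s\ge 0\}$ and $\{(s,2):s\le 2r\}$ is right; the dichotomy on the sign of $x(\beta[r-1,1])$ works (Case A uses Lemma \ref{lem:pos-prop-of-inv-plusplus} for the partner $\beta[r+1,1]$; Case B's squeeze correctly shows the center is the only on-line point, and the $\iota$-symmetry plus monotonicity in $s$ yields $x(\beta[2r,2])>0$); and your identification of the one delicate point — that $c_1\le \ell r+1\le 0$ gives only $c_1\le 0$, with the degenerate case $c_1=0$ forcing $\ell=1$, $r=-1$ and collapsing the strip to $V=0$ — is exactly where a careless argument would fail, and your separate check there (only $\beta[-1,1]$, $\beta[0,0]$, $\beta[-2,2]$ have $V=0$; membership of the center forces $a\le-1$; $a=-1$ gives a singleton, so the hypothesis forces $a\le -2$ and hence $\beta[-2,2]\in\Invplusplusx(s_{\beta[r,1]})$) is correct.
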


\begin{Lemma}
Let us make the assumptions of this subcase. Suppose
  \begin{align}
    \label{eq:064}
      \# \Invplusplusx(s_{\beta[r,1]}) > 1
  \end{align}
  and:
  \begin{align}
    \label{eq:065}
   \beta[r-1,1] \notin \Invplusplusx(s_{\beta[r,1]})
  \end{align}
  Then $r = -1$, and by the previous lemma:
  \begin{align}
    \label{eq:67}
    \beta[-2,2]  \in \Invplusplusx(s_{\beta[r,1]}).
  \end{align}
\end{Lemma}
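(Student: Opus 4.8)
The plan is to translate the whole question into the single increasing affine function governing the sign of $x$ on the three relevant rows $m\in\{0,1,2\}$, and then to enumerate $\Invplusplusx(s_{\beta[r,1]})$ by hand.

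\emph{Step 1 (Reformulating the hypotheses).} Recall $a=-\langle\nu,w(\beta)\rangle$ and $b=-\langle\mu,w(\beta)\rangle$ from \eqref{eq:3}, and set $f(s)=1-b+\ell(s-a)$, the $\pi$-coefficient appearing when $x$ acts on $\beta[s,1]$. Since $w(\beta)>0$, equation \eqref{eq:3} shows $x(\beta[s,1])>0$ if and only if $\sigma(s-a,f(s))=+1$, and as $f$ is strictly increasing (step $\ell>0$) the row $m=1$ has a single positivity threshold. The standing hypothesis $x(\beta[r,1])>0$ together with Lemma \ref{lem:pos-prop-of-inv-plusplus} (which makes the third defining condition for $\beta[r-1,1]\in\Invplusplusx(s_{\beta[r,1]})$, namely $x(\beta[r+1,1])>0$, automatic) shows that the hypothesis $\beta[r-1,1]\notin\Invplusplusx(s_{\beta[r,1]})$ is equivalent to $x(\beta[r-1,1])<0$; hence $r$ is exactly this threshold. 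Reading off the tie-breaks in $\sigma$ then yields the key numerical bound $0\le f(r)\le\ell$.

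\emph{Step 2 (Structure of the set).} Using Lemma \ref{lem:reflection-formula-with-pairing} and the rotation identity $|s_{\beta[r,1]}|(\beta[s,m])=\beta[2r-s,2-m]$, I would compute that $\beta[s,m]\in\Inv(s_{\beta[r,1]})$ iff $\sigma(s,m)=\sigma(2r-s,2-m)$; concretely this forces $m\in\{0,1,2\}$, namely all of row $m=1$, row $m=2$ with $s\le 2r$, and row $m=0$ with $s\ge0$. The set $\Invplusplusx(s_{\beta[r,1]})$ is stable under $\iota=|s_{\beta[r,1]}|$, whose unique fixed point is $\beta[r,1]$. Because the threshold in Step 1 equals $r$, the two defining inequalities in row $m=1$ collapse to $s=r$, so $\beta[r,1]$ is the \emph{only} row-$1$ element. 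Consequently every other element occurs in an $\iota$-pair $\{\beta[r-j,2],\,\beta[r+j,0]\}$ with $j\ge|r|$, and since $\#\Invplusplusx(s_{\beta[r,1]})>1$ at least one such pair exists.

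\emph{Step 3 (The bound $j\le1$).} For a pair indexed by $j$, I would expand the two positivity conditions $x(\beta[r-j,2])>0$ and $x(\beta[r+j,0])>0$ via \eqref{eq:3}; using $f(r-j)=f(r)-j\ell$ and $f(r+j)=f(r)+j\ell$ the first becomes $f(r)-j\ell+1\ge0$, whence $j\ell\le f(r)+1\le\ell+1$. For $\ell\ge2$ this gives $j\le1$ immediately. The only remaining possibility is $\ell=1$, $j=2$, which forces $f(r)=\ell$ and $r=-2$; but then the tie-break making $x(\beta[r-j,2])>0$ hold at the boundary requires $r-j\ge a$, i.e.\ $a\le-4$, while the tie-break encoding $x(\beta[r-1,1])<0$ from Step 1 requires $r-1<a$, i.e.\ $a\ge-2$, a contradiction. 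Hence $j\le1$ in all cases, and combining with $j\ge|r|$ gives $|r|\le1$, i.e.\ $r=-1$. The asserted membership $\beta[-2,2]\in\Invplusplusx(s_{\beta[r,1]})$ is then immediate from the previous lemma. The main obstacle is precisely the careful tracking of the $\sigma$ tie-breaks, both on the row $m=0$ and on the boundary where $f$ vanishes: a purely cardinality-based count permits the degenerate case $\ell=1$, $j=2$, and only the interaction between the boundary condition coming from $\beta[r-1,1]\notin\Invplusplusx(s_{\beta[r,1]})$ and the boundary condition coming from the pair rules it out.
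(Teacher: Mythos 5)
Your proof is correct; I checked each step against the paper's formulas. Note that the paper states this lemma (and the preceding one) without proof, so there is no argument of record to compare against — your write-up supplies the missing argument, and it does so using exactly the paper's machinery. Specifically: (i) Step 1's reduction is right — $\beta[r-1,1]$ lies in row $m=1$ of $\Inv(s_{\beta[r,1]})$ automatically, its $\iota$-image $\beta[r+1,1]$ has $x(\beta[r+1,1])>0$ by Lemma \ref{lem:pos-prop-of-inv-plusplus}, so non-membership is equivalent to $x(\beta[r-1,1])<0$, and the two tie-breaks in $\sigma$ give precisely $0\le f(r)\le \ell$; (ii) Step 2's description of $\Inv(s_{\beta[r,1]})$ (all of row $m=1$, row $m=2$ for $s\le 2r$, row $m=0$ for $s\ge 0$) follows from $\sigma(s,m)=\sigma(2r-s,2-m)$, and since positivity on row $1$ is upward-closed with threshold exactly $r$, the only row-$1$ element of $\Invplusplusx(s_{\beta[r,1]})$ is the fixed point $\beta[r,1]$, forcing every further element into an $\iota$-pair $\{\beta[r-j,2],\beta[r+j,0]\}$ with $j\ge|r|$; (iii) Step 3's inequality $j\ell\le f(r)+1\le\ell+1$ is the correct consequence of $x(\beta[r-j,2])>0$, and your tie-break contradiction ($a\le-4$ versus $a\ge-2$) is what rules out the one residual configuration $\ell=1$, $j=2$, $r=-2$; a cruder inequality-only argument would indeed miss this. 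One remark: the lemma's own phrasing (``by the previous lemma'') suggests the intended order of argument is to invoke the previous lemma first, obtaining $\beta[2r,2]\in\Invplusplusx(s_{\beta[r,1]})$, i.e.\ the pair with $j=|r|$, and then run your Step 3 bound for that single pair; this is the same computation as yours, so the two routes are essentially identical, with yours being marginally more self-contained since it derives $r=-1$ from the existence of any pair rather than the specific one.
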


\noindent {\it Subsubcase:} $\beta[r-1,1] \in \Invplusplusx(s_{\beta[r,1]})$.

Let us choose $c>0$ to be the largest integer such that:
\begin{align}
  \label{eq:60}
x(\beta[r-c,1]) > 0
\end{align}
Then we claim that 
\begin{align}
  \label{eq:62}
  x \rightarrow x s_{\beta[r-c,1]} \rightarrow x s_{\beta[r-c,1]} s_{\beta[r,1]} \rightarrow x s_{\beta[r,1]}
\end{align}
is a three-term chain.
\begin{proof}
  We have $x(\beta[r-c,1]) > 0$ by construction.

For the second term, we have
\begin{align}
  \label{eq:68}
x s_{\beta[r-c,1]} (\beta[r,1]) = x ( - \beta[r-2c,1]) >0
\end{align}
because $2c > c$.

For the third term, we have
\begin{align}
  \label{eq:69}
 x s_{\beta[r,1]} =  x s_{\beta[r-c,1]} s_{\beta[r,1]} s_{\beta[r+c,1]}
\end{align}
and we compute
\begin{align}
  \label{eq:70}
  x s_{\beta[r-c,1]} s_{\beta[r,1]} (\beta[r+c,1])
  &= x s_{\beta[r-c,1]} s_{\beta[r,1]} s_{\beta[r+c,1]} (-\beta[r+c,1] )\\
  &= x s_{\beta[r,1]}(-\beta[r+c,1] ) = x( \beta[r-c,1]) \notag
\end{align}
which is positive by construction.
\end{proof}

\noindent {\it Subsubcase:} $r=-1$, $\beta[r-1,1] \notin \Invplusplusx(s_{\beta[r,1]})$, and $\beta[-2,2] \in \Invplusplusx(s_{\beta[r,1]})$.

By the assumption that $\beta[-2,2] \in \Invplusplusx(s_{\beta[r,1]})$, we also have:
\begin{align}
  \label{eq:71}
  \beta[0,0] \in \Invplusplusx(s_{\beta[r,1]})
\end{align}
In this subsubcase, we claim that
\begin{align}
  \label{eq:72}
  x \rightarrow x s_{\beta[0,0]} \rightarrow x s_{\beta[0,0]} s_{\beta[1,-1]} \rightarrow x s_{\beta[-1,1]}
\end{align}
is a three-term chain.
\begin{proof}
  We have $x(\beta[0,0]) > 0$ by assumption.
  
For the second term, we calculate:
\begin{align}
  \label{eq:73}
  x s_{\beta[0,0]} (\beta[1,-1]) =  x(\beta[-1,1]) > 0.
\end{align}

For the third term, we have
\begin{align}
  \label{eq:75}
  x s_{\beta[0,0]} s_{\beta[1,-1]} s_{\beta[0,0]} = x s_{\beta[-1,1]}
\end{align}
and:
\begin{align}
  \label{eq:76}
  x s_{\beta[0,0]} s_{\beta[1,-1]} ( \beta[0,0]) = x s_{\beta[-1,1]}( - \beta[0,0]) = x ( \beta[-2,2]) > 0.
\end{align}
\end{proof}

\subsection{ {\bf Case 2:} $n > 0$ and $ \beta[r,n-1] \in \Invplusplusx(s_{\beta[r,n]})$}

In this case,
  \begin{align}
    \label{eq:49}
   x \rightarrow x s_{\beta[r,n-1]} \rightarrow x s_{\beta[r,n-1]}s_{\beta[r,-1]} \rightarrow x s_{\beta[r,n]}
  \end{align}
is a chain in the Bruhat order.
\begin{proof}
 Because $x(\beta[r,n-1])>0$, we have:
\begin{align}
  \label{eq:52}
   x \rightarrow x s_{\beta[r,n-1]}.
\end{align}

For the second term of the chain, we compute
\begin{align}
  \label{eq:53}
 x s_{\beta[r,n-1]}(\beta[r,-1]) =  x(\beta[r,2n-1)]) > 0
\end{align}
because $n \geq 1$ and Lemma \ref{lem:pos-prop-of-inv-plusplus}.

Finally, for the third term of the chain, we compute that
\begin{align}
  \label{eq:54}
  x s_{\beta[r,n-1]}s_{\beta[r,-1]}s_{\beta[r,0]} = x s_{\beta[r,n]}
\end{align}
and:
\begin{align}
  \label{eq:56}
  x s_{\beta[r,n-1]}s_{\beta[r,-1]}(\beta[r,0]) 
  &= x s_{\beta[r,n-1]}s_{\beta[r,-1]}s_{\beta[r,0]}( - \beta[r,0])\\
  &= xs_{\beta[r,n]}(- \beta[r,0]) = x(\beta[r,2n]) > 0.\notag
\end{align}
\end{proof}

\subsection{{\bf Case 3:} $n > 0$ and $ \beta[r,n-1]  \notin \Invplusplusx(s_{\beta[r,n]})$}
For this to occur, what fails is that:
\begin{align}
  \label{eq:41}
  x(\beta[r,n-1]) < 0
\end{align}
Equivalently, by \eqref{eq:3},  
\begin{align}
  \label{eq:40}
\sigma(r,n-1) \cdot \sigma(r-a,n-1-b+\ell(r-a)) < 0
\end{align}
However, since
\begin{align}
  \label{eq:42}
  x(\beta[r,n]) > 0
\end{align}
we have: 
\begin{align}
  \label{eq:43}
\sigma(r,n) \cdot \sigma(r-a,n-b+\ell(r-a)) > 0
\end{align}
By our assumption that $\sigma(r,n) > 0$, we have 
$\sigma(r-a,n-b+\ell(r-a)) = 1$.

Because Case 1 handles $\sigma(r,n-1)=-1$, we may assume $\sigma(r,n-1) = \sigma(r,n) = 1$. With this assumption, we have 
\begin{align}
  \label{eq:44}
  \sigma(r-a,n-1-b+\ell(r-a)) = -1
\end{align}
and
\begin{align}
  \label{eq:45}
 \sigma(r-a,n-b+\ell(r-a)) = +1 
\end{align}

\begin{Proposition}
  \label{prop:inv-on-slope-of-neg-ell}
Given the assumptions of this case,  $\Invplusplusx(s_{\beta[r,n]})$ lies on the line of slope $-\ell$ passing through $(r,n)$.
\end{Proposition}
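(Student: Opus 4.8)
The plan is to understand the claim as a geometric constraint coming from the boundary case analysis. Recall from the setup of Case 3 that we have assumed $w(\beta)>0$, $\sigma(r,n)>0$, $n>0$, and that $\beta[r,n-1]\notin\Invplusplusx(s_{\beta[r,n]})$ because $x(\beta[r,n-1])<0$ while $x(\beta[r,n])>0$. The conditions \eqref{eq:44} and \eqref{eq:45} say that the auxiliary quantity $\sigma(r-a,\,\cdot-b+\ell(r-a))$ flips sign precisely between $n-1$ and $n$. Using $a=-\langle\nu,w(\beta)\rangle$ and $b=-\langle\mu,w(\beta)\rangle$ as defined, my goal is to show that any $\beta[s,m]\in\Invplusplusx(s_{\beta[r,n]})$ must satisfy the linear relation $m-n=-\ell(s-r)$, i.e.\ that $(s,m)$ lies on the line of slope $-\ell$ through $(r,n)$.

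First I would write out, via \eqref{eq:3}, the two positivity conditions defining membership in $\Invplusplusx(s_{\beta[r,n]})$: namely $x(\beta[s,m])>0$ and $x(-s_{\beta[r,n]}(\beta[s,m]))>0$. Since we are in the rank-one situation, every such root has classical part $\beta$, so $-s_{\beta[r,n]}(\beta[s,m])$ is again indexed by a pair, and by the reflection formula $|s_{\beta[r,n]}|(\beta[s,m])=\beta[2r-s,2n-m]$ the second condition becomes $x(\beta[2r-s,2n-m])>0$. Thus both $(s,m)$ and its $180^\circ$ rotation $(2r-s,2n-m)$ about $(r,n)$ must land in the region where $x(\beta[\cdot,\cdot])>0$. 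Translating each of these through \eqref{eq:3} gives a pair of inequalities on the sign $\sigma(s-a,\,m-b+\ell(s-a))$ and its rotated counterpart.

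Next I would combine this with the sharp boundary information from \eqref{eq:44}–\eqref{eq:45}. The key point is that the half-plane $\{\sigma(u-a,\,v-b+\ell(u-a))=+1\}$ has its boundary along the line of slope $-\ell$ through $(a,b)$ (shifted appropriately), and the hypothesis that $(r,n)$ sits on this boundary — with $(r,n-1)$ on the negative side — pins $(r,n)$ to the boundary line exactly. Requiring that both $(s,m)$ and the rotated point $(2r-s,2n-m)$ lie on the positive side of this same half-plane, while their midpoint $(r,n)$ lies exactly on the boundary, forces both points to lie on the boundary line itself: a point and its reflection through a boundary point of a half-plane can both be in the closed half-plane only if they lie on the bounding hyperplane. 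This is the convexity/symmetry observation that yields the slope $-\ell$ constraint.

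The main obstacle I anticipate is bookkeeping with the sign function $\sigma$ and the genuinely discrete (lattice) nature of the problem: $\sigma$ is not literally the sign of a single linear form but has the piecewise definition with the special behavior at $(0,0)$ flagged in the \textbf{Caution}, so the ``half-plane'' is really a discrete region whose boundary lattice points need separate care. I would handle this by reducing, via the two simplifying assumptions $w(\beta)>0$ and $\sigma(r,n)>0$ already in force, to the regime where $\sigma$ agrees with the sign of $n$ (or of $r$ when $n=0$), so that the positivity region for $x(\beta[\cdot,\cdot])$ is governed by the genuine linear inequality $\sigma(s-a,\,m-b+\ell(s-a))>0$; the degenerate $(0,0)$ case is exactly what Case 1 has already absorbed, so within Case 3 the linear picture is clean. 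Once the half-plane is honestly linear, the reflection-through-a-boundary-point argument closes the proof, giving $m-n+\ell(s-r)=0$ for every $\beta[s,m]\in\Invplusplusx(s_{\beta[r,n]})$.
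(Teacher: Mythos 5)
Your reflection-through-a-boundary-point argument is sound as far as it goes, and it reproduces the paper's treatment of one sub-case; but your reduction to that picture has a genuine gap. Within Case 3, positivity $x(\beta[s,m])>0$ (for $\sigma(s,m)=+1$) is governed by $\sigma(s-a,\,m-b+\ell(s-a))$, and the hypotheses $x(\beta[r,n])>0$, $x(\beta[r,n-1])<0$, $\sigma(r,n-1)=\sigma(r,n)=+1$ leave \emph{two} possibilities for the sheared height $h=n-b+\ell(r-a)$: either $h=0$ with $r-a\ge 0$, so $(r,n)$ lies on the boundary line (your situation, which is the paper's case $r-a>0$); or $h=1$ with $r-a<0$, where $(r,n)$ lies strictly inside the open half-plane and $(r,n-1)$ is excluded only because it sits on the \emph{excluded half} of the boundary line (the lattice points with $u$-coordinate $s-a<0$). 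Your claim that this degenerate behavior ``is exactly what Case 1 has already absorbed'' conflates two different evaluations of $\sigma$: Case 1 removes $\sigma(r,n-1)=-1$ in the \emph{original} coordinates, whereas the obstruction here is $\sigma(r-a,\,n-1-b+\ell(r-a))=-1$ in the \emph{sheared} coordinates, which is perfectly compatible with $\sigma(r,n-1)=+1$. So the case $r-a<0$ survives into Case 3, and there your midpoint argument does not apply, since the midpoint $(r,n)$ is not on the boundary.

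Worse, in that surviving case the literal conclusion you are aiming for can fail. Take untwisted affine $A_1$ and $x=\pi^{\Lambda_0}$ (so $\ell=1$, $w=e$, $a=b=0$; the rank-one hypothesis is automatic here) with $(r,n)=(-1,2)$: then $x(\beta[-1,2])>0$ and $x(\beta[-1,1])<0$, so we are in Case 3 with $r-a=-1<0$, and one checks directly that $\beta[0,0]\in\Invplusplusx(s_{\beta[-1,2]})$ (it lies in $\Inv(s_{\beta[-1,2]})$, $x(\beta[0,0])=\beta>0$, and $x(\beta[-2,4])>0$), yet $(0,0)$ is not on the line $m+s=1$ of slope $-\ell$ through $(-1,2)$. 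The paper's proof acknowledges this by treating $r-a<0$ separately, with a further split on the sign of $b$: when $b<0$ it shows every element (after applying the involution) has sheared height exactly $1$, hence lies on the line; when $b\ge 0$ it only exhibits the single element $\beta[r+1,n-\ell]\in\Invplusplusx(s_{\beta[r,n]})$, which is weaker than the proposition as stated but is exactly what the ensuing chain construction uses (an on-line element giving $c\ge 1$). So a correct write-up must either add the $r-a<0$ analysis along the paper's lines or weaken the goal to the existence of an on-line element other than $\beta[r,n]$; the half-plane symmetry argument alone cannot deliver all of Case 3.
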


\begin{proof}
\noindent{Case: $r- a = 0$.}
This case does not happen because:
\begin{align}
  \label{eq:106}
 \#\Invplusplusx(s_{\beta[r,n]}) = 1 . 
\end{align}

\noindent{Case: $r- a > 0$.}
In this case, we must have:
\begin{align}
  \label{eq:46}
 n-1-b+\ell(r-a) = -1. 
\end{align}
That is, the line through $(r,n)$ and $(a,b)$ has slope $-\ell$. Using the involution on $\Invplusplusx(s_{\beta[r,n]})$, we see that $\beta[\tr,\tn] \in \Invplusplusx(s_{\beta[r,n]})$ only if $(\tr,\tn)$ lies on this line passing through $(r,n)$ and $(a,b)$, which has slope $-\ell$. 

\noindent{Case: $r- a < 0$.}
In this case, we must have
\begin{align}
  \label{eq:47}
 n-1-b+\ell(r-a) = 0. 
\end{align}

First, let us consider when $b \geq 0$.
Therefore, $n= 1+b - \ell(r-a)$. Because $r-a <0$ and $b \geq 0$, we therefore have $n > \ell$. Hence $\beta[r+1, n- \ell] \in \Invplusplusx(s_{\beta[r,n]})$.

Let us now consider when $b < 0$. Suppose $\beta[\tr,\tn] \in \Invplusplusx(s_{\beta[r,n]})$. Using the involution on $\Invplusplusx(s_{\beta[r,n]})$, we may assume:
\begin{align}
  \label{eq:92}
  \tn - n + \ell(\tr -r) \leq 0
\end{align}
Because
\begin{align}
  \label{eq:93}
 n-1-b+\ell(r-a) = 0 
\end{align}
we have:
\begin{align}
  \label{eq:94}
   \tn  -1-b+\ell(r-a) + \ell(\tr -r) \leq 0.
\end{align}
Therefore:
\begin{align}
  \label{eq:95}
  \tn -1-b+ \ell(\tr -a) \leq 0.
\end{align}

Because
\begin{align}
  \label{eq:96}
  - \ell (\tr -a ) \geq  \tn -1 - b
\end{align}
and $\tn \geq 0$ and $b < 0$, we have
\begin{align}
  \label{eq:97}
  - \ell (\tr -a )  \geq 0
\end{align}
which implies:
\begin{align}
  \label{eq:98}
  \tr -a \geq 0
\end{align}

Requiring $x(\beta[\tr,\tn]) >0$ is equivalent to:
\begin{align}
  \label{eq:99}
  \sigma(\tr-a, \tn - b + \ell (\tr -a)) >0.
\end{align}

So if $\tr -a < 0$, we must have 
\begin{align}
  \label{eq:100}
  \tn - b + \ell (\tr -a) > 0
\end{align}
which implies (using \eqref{eq:94}) that 
\begin{align}
  \label{eq:101}
  \tn - b + \ell (\tr -a) = 1.
\end{align}

If $\tr -a = 0$, then we have to handle this case separately. Then we have 
\begin{align}
  \label{eq:102}
  0 \leq \tn - b \leq 1
\end{align}
which implies $\tn = 0$ and $b = 1$. We therefore still have:
\begin{align}
  \label{eq:103}
  \tn - b +\ell (\tr -a) = 1.
\end{align}
Using \eqref{eq:93}, we conclude
\begin{align}
  \label{eq:50}
  \tn + \ell \tr = n + \ell r
\end{align}
that is, $(\tr,\tn)$ lies on the line of slope $-\ell$ passing through $(r,n)$.
\end{proof}

\begin{Lemma}
  \label{lem:shift-by-ell}
 Suppose $\sigma(r,n) >0$ and $x(\beta[r,n]) > 0$. Suppose $d \geq 0$ and $\sigma(r+d,n-d\ell)>0$. Then $ x(\beta[r+d,n-d\ell]) > 0 $. 
\end{Lemma}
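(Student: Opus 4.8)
The plan is to reduce everything to the explicit formula \eqref{eq:3} for $x(\beta[s,m])$ and to track how the two arguments of $\sigma$ transform under the replacement $(r,n)\mapsto(r+d,n-d\ell)$. Recall that $a=-\langle\nu,w(\beta)\rangle$ and $b=-\langle\mu,w(\beta)\rangle$. Under the standing simplifying assumption $w(\beta)>0$ made at the start of this section, \eqref{eq:3} says that $x(\beta[s,m])>0$ if and only if $\sigma(s,m)\cdot\sigma(s-a,\,m-b+\ell(s-a))>0$, since $w(\beta)[s-a,m-b+\ell(s-a)]$ is always a positive double-affine root. First I would use the hypotheses $\sigma(r,n)>0$ and $x(\beta[r,n])>0$ to conclude $\sigma(r-a,\,n-b+\ell(r-a))>0$.

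The crux is the following invariance. Writing $p=r-a$ and $q=n-b+\ell(r-a)$, I would compute that the second argument of $\sigma$ for the shifted pair $(r+d,n-d\ell)$ equals $(n-d\ell)-b+\ell(r+d-a)=n-b+\ell(r-a)=q$, while its first argument is $(r+d)-a=p+d$. In other words, the replacement $(r,n)\mapsto(r+d,n-d\ell)$ fixes the transformed $\pi$-coefficient $q$ and merely increases the transformed $\delta$-coefficient from $p$ to $p+d$. Thus the whole lemma comes down to showing $\sigma(p+d,q)>0$ given $\sigma(p,q)>0$ and $d\geq0$.

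This last step follows immediately from the definition of $\sigma$. If $q>0$ then both $\sigma(p,q)$ and $\sigma(p+d,q)$ equal $+1$; if $q<0$ then $\sigma(p,q)=-1$, contradicting $\sigma(p,q)>0$, so this case cannot occur; and if $q=0$ then $\sigma(p,0)>0$ forces $p\geq0$, whence $p+d\geq0$ (as $d\geq0$) and $\sigma(p+d,0)=+1$. Therefore $\sigma(p+d,q)>0$, that is, $\sigma(r+d-a,\,(n-d\ell)-b+\ell(r+d-a))>0$. Combining this with the remaining hypothesis $\sigma(r+d,n-d\ell)>0$ and applying \eqref{eq:3} once more yields $x(\beta[r+d,n-d\ell])>0$, as desired.

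There is no serious obstacle here; the only thing to get right is the bookkeeping of the two $\sigma$-arguments and the verification that the $\pi$-coefficient $q$ is genuinely preserved by the shift, which is exactly the reason the shift is taken with slope $-\ell$. If one wished to drop the simplifying assumption $w(\beta)>0$, the case $w(\beta)<0$ is handled identically using the second branch of \eqref{eq:3}, where the same invariance of $q$ and monotonicity of $\sigma$ in its first argument apply after accounting for the extra sign.
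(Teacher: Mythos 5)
Your proof is correct. The paper in fact gives no proof of this lemma at all---it is stated as an immediate consequence of \eqref{eq:3}, in the same spirit as Lemma \ref{lem:pos-prop-of-inv-plusplus}---and your computation (that the slope $-\ell$ shift preserves the second argument $q=n-b+\ell(r-a)$ of $\sigma$ while increasing the first, together with the monotonicity of $\sigma$ in its first argument when $q=0$) is exactly the verification the authors left implicit, including the correct sign bookkeeping for the omitted case $w(\beta)<0$.
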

Let $c$ be the largest integer such that $x(\beta[r+c,n-c\ell]) > 0$; by Proposition \ref{prop:inv-on-slope-of-neg-ell} and \eqref{eq:105} we have $c \geq 1$. By Lemma \ref{lem:shift-by-ell}, $\beta[r+c,n-c\ell] \in \Invplusplusx(s_{\beta[r,n]})$. Then:
\begin{align}
  \label{eq:83}
x \rightarrow x s_{\beta[r+c, n- c\ell]} \rightarrow x s_{\beta[r,n]} s_{\beta[r+c, n- c\ell]}
\rightarrow x s_{\beta[r,n]}
\end{align}
is a three-term chain.

\begin{proof}
  
For the first term, we have $x(\beta[r+c, n- c\ell]) > 0$ by assumption.

For the second term, we calculate:
\begin{align}
  \label{eq:84}
 x s_{\beta[r,n]} s_{\beta[r+c, n- c\ell]} = x  s_{\beta[r+c, n- c\ell]} s_{\beta[r+c, n- c\ell]}   s_{\beta[r,n]} s_{\beta[r+c, n- c\ell]} 
\end{align}
Because of our conditions defining $c$, we have:
\begin{align}
  \label{eq:87}
  s_{\beta[r+c, n- c\ell]}( \beta[r,n]) = - \sigma(r,n) \sigma(r+2c,n-2c \ell)  \beta[r+2c,n-2c \ell] = \beta[r+2c,n-2c \ell].
\end{align}

Therefore
\begin{align}
  \label{eq:88}
x s_{\beta[r+c, n- c\ell]} \rightarrow x s_{\beta[r,n]} s_{\beta[r-c, n- c\ell]}
\end{align}
if and only if:
\begin{align}
  \label{eq:89}
x s_{\beta[r+c,n-c\ell]}(  \beta[r+2c,n-2c \ell]) > 0.
\end{align}
We have $ s_{\beta[r+c,n-c\ell]}(  \beta[r+2c,n-2c \ell]) > 0$, hence:
\begin{align}
  \label{eq:90}
  x s_{\beta[r+c,n-c\ell]}(  \beta[r+2c,n-2c \ell]) = x( \beta[r,n]) > 0.
\end{align}

For the third-term, we need:
\begin{align}
  \label{eq:91}
  x s_{\beta[r,n]} (\beta[r-c, n- c\ell]) < 0.
\end{align}
But this follows from $\beta[r-c,n-c\ell] \in \Invplusplusx(s_{\beta[r,n]})$.
\end{proof}

This completes the proof of Proposition \ref{prop:sl-two-cover-classification} and hence Theorem \ref{thm:ade-covering}.

\section{Further questions}
\label{sec:further-questions}

Although we have developed the Bruhat order on $\WTits$ and the length function in a combinatorial fashion, we expect both to have geometric and group-theoretic relevance. In this section we will describe some questions and conjectures about this perspective.

Let $\bG$ be an untwisted affine Kac-Moody group with positive and negative Borel subgroups $\bB$ and $\bB_-$, and let $k$ be a finite field. Likely we can relax these hypothesis to $\bG$ being general Kac-Moody and $k$ being an arbitrary field, but we retain these assumptions so that we can directly cite~\cite{BKP}. Let $F = k((\pi))$, the field of formal Laurent series over $k$, and let $\cO = k[[\pi]]$ be the ring of integers in $F$. Let $G = \bG(F)$, let $K = \bG(\cO)$, and let  $I = \{ g \in K \suchthat g \in \bB(k) \mod \pi \}$. Let $G^+ \subset \bG(F)$ be the {\bf Cartan semi-group}, i.e., the locus where the Cartan decomposition holds (see \cite{BKP,M} for the details). Furthermore we have a set-theoretic (not homomorphic) embedding $\WTits \subset G^+$ that is uniquely specified up to right multiplication by $I$. Then we have the following decomposition of $G^+$ (see \cite[Proposition 3.4.2 and Lemma 3.4.3]{BKP}).
\begin{Proposition}
We have an equality of subsets:
\begin{align}
  \label{eq:25}
G^+ = \bigsqcup_{x\in\WTits} I x I  
\end{align}
\end{Proposition}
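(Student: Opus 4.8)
The plan is to derive this Iwahori-level decomposition from the coarser Cartan decomposition of $G^+$ together with the Bruhat decomposition of the residual Kac-Moody group, following the strategy of~\cite{BKP}. The foundational input is the Cartan decomposition $G^+ = \bigsqcup_{\lambda} K \pi^\lambda K$, where $\lambda$ ranges over the dominant coweights lying in the Tits cone $\cT$; this identifies $K\backslash G^+ / K$ with the set of dominant coweights in $\cT$, and it is itself the main nontrivial structural result about the Cartan semigroup in the affine Kac-Moody setting. I would take this decomposition as given.

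Next I would refine each $K$-double coset using $I$. Reduction modulo $\pi$ gives a surjection $K = \bG(\cO) \twoheadrightarrow \bG(k)$ whose composition with $\bG(k) \twoheadrightarrow \bG(k)/\bB(k)$ has kernel exactly $I$, so that $K/I \cong \bG(k)/\bB(k)$. Since $\bG(k)$ carries a $BN$-pair with Weyl group $W$, its Bruhat decomposition lifts to $K = \bigsqcup_{w\in W} I \dot w I$, where $\dot w \in K$ is any lift of a representative of $w$. Combining this with the Cartan decomposition, every $g \in G^+$ lies in some double coset $I\dot w_1 \pi^\lambda \dot w_2 I$. Commuting $\pi^\lambda$ past $\dot w_2$ via $\pi^\lambda \dot w_2 = \dot w_2 \pi^{w_2^{-1}\lambda}$ and absorbing torus factors into $I$ rewrites this as $IxI$ with $x = \pi^{w_1\lambda}\,(w_1 w_2)$, using that $\cT$ is $W$-stable (so $w_1\lambda \in \cT$) and $w_1 w_2 \in W$. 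This establishes the covering $G^+ = \bigcup_{x\in\WTits} IxI$, and it explains transparently why the index set is the semigroup $\WTits = \cT \rtimes W$.

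The hard part will be disjointness: showing that $IxI = Ix'I$ forces $x = x'$ for $x,x' \in \WTits$. The translation part is controlled essentially for free, since the dominant $W$-translate of the coweight of $x$ is a Cartan invariant and hence constant on $IxI$; this reduces matters to separating elements sharing a fixed dominant coweight $\lambda$. The genuine obstacle is that, unlike in the single-affine case, the ambient group $G = \bG(F)$ admits \emph{no} global $BN$-pair with Weyl group $\WTits$ — indeed $\WTits$ is only a semigroup, and the Bruhat decomposition fails outside $G^+$ — so one cannot simply invoke the standard $BN$-pair disjointness axiom. Instead I would argue disjointness directly inside each stratum $K\pi^\lambda K$, analyzing the interaction of $I$, $\pi^\lambda$, and the affine root subgroups to exclude collisions between distinct $I$-double cosets; this is precisely where \cite[Lemma 3.4.3]{BKP} does the essential work. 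As a consistency check at this stage, the strict compatibility of $\ell$ with the order in Theorem~\ref{thm:compatibility-of-ell-with-order} both presupposes and is compatible with such an injective indexing of the cells by $\WTits$.
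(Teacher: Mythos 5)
Your proposal takes essentially the same approach as the paper: the paper offers no independent proof of this Proposition but simply cites \cite[Proposition 3.4.2 and Lemma 3.4.3]{BKP}, which is exactly where you also place the decisive work (the Cartan decomposition of $G^+$ as input, and BKP's lemma for disjointness of the Iwahori double cosets). Your intermediate sketch---refining the Cartan decomposition by the residue-field Bruhat decomposition $K=\bigsqcup_{w\in W} I\dot{w}I$ and rewriting $\dot{w}_1\pi^\lambda\dot{w}_2$ as the element $\pi^{w_1\lambda}\,w_1w_2$ of $\WTits$---is a reasonable reconstruction of how the cited argument goes, though be aware it glosses the absorption of the \emph{inner} Iwahori factors in $I\dot{w}_1I\,\pi^\lambda\,I\dot{w}_2I$ into single double cosets $IxI$, a nontrivial step that the cited results of \cite{BKP} are also needed to handle.
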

\noindent Rephrasing this, the $I$-orbits on $G^+/I$ are exactly indexed by $\WTits$. 

\subsection{Double-affine Schubert cells}
If we momentarily consider the single-affine case of $\bG$ being finite type, then $G^+/I$ is precisely the $k$-points of the (single) affine flag variety, and the $I$ orbits on $G^+/I$ are precisely the (single) affine Schubert cells. And the (single) affine Bruhat order exactly describes the closure order on affine Schubert cells.

So following that, we will \emph{define} $G^+/I$ to be the {\bf $k$-points of the double-affine flag variety} and we will define the $I$ orbits on $G^+/I$ to be the {\bf double-affine Schubert cells}.
Following the single-affine heuristic, let us make the following definition. 

\begin{Definition}
 Let us define the {\bf closure} of a double-affine Schubert cell $Ix\cdot I/I$ by:
 \begin{align}
   \label{eq:26}
   \overline{Ix \cdot I/I} = \bigsqcup_{y \leq x} I y \cdot I/I
 \end{align}
 We will call sets of the form $\overline{Ix \cdot I/I}$ {\bf double-affine Schubert varieties}.
\end{Definition}

\begin{Question}
  {\label{ques:define-double-affine-flag-var}}
Can we define $G^+/I$ as an algebro-geometric object so that formula \eqref{eq:26} coincides with the closure in the Zariski topology?
\end{Question}

\subsection{Transverse slices}

One can easily see that $Ix \cdot I/I$ is an infinite set for most $x \in \WTits$, so there is no chance that $\overline{Ix \cdot I/I}$ is equal to the $k$-points of a finite-type $k$-scheme. Moreover, because the Bruhat order on $\WTits$ is unbounded below, it seems unlikely that $\overline{Ix \cdot I/I}$ is an ind-scheme of ind-finite type. Unfortunately, it seems that giving  $G^+/I$ a geometric structure will be comparably difficult to the problem of giving a geometric structure to the semi-infinite flag variety. So Question~\ref{ques:define-double-affine-flag-var}  may be too difficult.

However, what seems more reasonable is to work with transverse slices to a double-affine Schubert variety sitting inside one another. To define these objects (at the level of $k$-points), we need to introduce two other subgroups of $G$. Let $K_\infty = G( k[\pi^{-1}])$ and let $I_{\infty} = \{ g \in K_\infty \suchthat g \in \bB_-(k) \mod \pi^{-1} \}$. 

\begin{Definition}
Let $x,y \in \WTits$. Then let us define the transverse slice to $\overline{I y \cdot I/I}$ inside $\overline{I x \cdot I/I}$ as:
\begin{align}
  \label{eq:27}
  \overline{I x \cdot I/I} \cap I_\infty y \cdot I/I.
\end{align}
\end{Definition}

\begin{Conjecture}
  Let $x,y \in \WTits$. Then
  \begin{align}
    \label{eq:28}
     \overline{I x \cdot I/I} \cap I_\infty y \cdot I/I \neq \emptyset 
  \end{align}
  if and only if $x \leq y$.
\end{Conjecture}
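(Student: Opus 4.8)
The plan is to reduce the conjecture to a \emph{relative position} statement about the two natural stratifications of the Cartan semigroup $G^+$, and then to attack the two implications separately. Since the closure is defined by $\overline{IxI/I}=\bigsqcup_{z\le x}IzI/I$ as a set, a coset $gI$ lies in $\overline{IxI/I}\cap I_\infty yI/I$ exactly when $g\in I_\infty yI$ and $g\in IzI$ for some $z\le x$. Thus the intersection is nonempty if and only if there exists $z\le x$ with $IzI\cap I_\infty yI\neq\emptyset$, and everything comes down to understanding, for $z,y\in\WTits$, when the $I$-double coset of $z$ meets the $I_\infty$-double coset of $y$ \emph{inside} $G^+$. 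I would first establish a double-affine Birkhoff decomposition, i.e.\ that $G^+$ is also stratified by the sets $I_\infty wI$ with $w\in\WTits$, and then isolate the precise relation characterizing $IzI\cap I_\infty yI\cap G^+\neq\emptyset$. The length function already predicts the correct orientation: the transverse slice should have ``size'' governed by $\ell(y)-\ell(x)$, which by the strict compatibility of $\ell$ (Theorem~\ref{thm:compatibility-of-ell-with-order}) is nonnegative precisely when $x\le y$.

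For the implication $x\le y\Rightarrow$ nonempty, I would build an explicit point by induction along a saturated chain. By Theorem~\ref{thm-length-formula-for-sbeta-inversion-set} any relation $x\le y$ is refined by a chain $x=x_0\lhd x_1\lhd\cdots\lhd x_k=y$ with each step of the form $x_{i+1}=x_i s_{\beta_i[n_i]}$. It therefore suffices to produce, for a single covering $x\lhd y=xs_{\beta[n]}$, an element of $I_\infty yI\cap G^+$ whose $I$-cell is some $z\le x$, and then to transport such a point along the chain. Here I would use the negative real-affine root subgroup attached to $\beta[n]$ sitting inside $I_\infty$ to move the base point $yI$, using the explicit reflection and action formulas for $s_{\beta[r,n]}$ from the previous sections to verify both that the resulting element lands in $G^+$ and that its valuation data place it in a cell $z\le x$.

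For the converse I would prove the contrapositive: if $x\not\le y$ then $IzI\cap I_\infty yI\cap G^+=\emptyset$ for every $z\le x$. This is the substantive relative-position theorem, and it is here that the semigroup positivity of $G^+$ is indispensable and the orientation is decided. The point is that $I_\infty$ is built from the \emph{opposite} Borel over $k[\pi^{-1}]$, whereas membership in $G^+$ is a dominance (positivity) condition; I would analyze the valuations of the relevant affine Pl\"ucker coordinates of a putative $g\in I_\infty yI\cap G^+$ and show that these two constraints are jointly satisfiable in the cell $IzI$ only in a manner compatible with $x\le y$. In the classical finite-type and single-affine settings the analogous computation reproduces the Richardson nonemptiness statement; the novelty in the double-affine case is precisely that the positivity cutting out $G^+$ is what produces the orientation $x\le y$ rather than the naive $y\le x$.

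The main obstacle is foundational rather than computational. We have no algebro-geometric model for $G^+/I$, so the two stratifications are infinite-dimensional and $G^+$ is only a semigroup; the standard BN-pair and dimension-counting arguments that prove the classical Richardson statement are therefore unavailable. Moreover, a purely poset-theoretic, cell-by-cell analysis cannot by itself yield a clean equivalence, because the Bruhat order on $\WTits$ is unbounded below: the correct orientation must be extracted from the positivity constraint defining $G^+$ together with the length-function heuristic above, and not from the order alone. Making this relative-position theorem precise --- equivalently, giving $G^+/I$ enough geometric structure to run a genuine transversality argument --- is the crux, and I expect it to be the hardest part.
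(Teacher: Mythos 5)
This statement is one of the open conjectures of Section~\ref{sec:further-questions}: the paper contains no proof of it, so there is no argument of the authors to compare yours against, and your submission must be judged on its own. As it stands it is a research program rather than a proof: the double-affine Birkhoff-type decomposition of $G^+$ into $I_\infty$-double cosets, the relative-position criterion for when $IzI\cap I_\infty yI\neq\emptyset$ inside $G^+$, and the Pl\"ucker-valuation positivity analysis are all announced rather than carried out, and you yourself identify the foundational step --- giving $G^+/I$ enough geometric structure to run a transversality argument --- as the unresolved crux. That is essentially a restatement of the difficulty the paper already records next to this conjecture, so the substantive content of both implications remains open in your write-up.

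There is also a concrete mathematical failure: your converse step is impossible as you formulate it, because the conjecture as printed cannot hold verbatim and your own first reduction exposes this. You correctly observe that the intersection is nonempty iff $IzI\cap I_\infty yI\neq\emptyset$ for some $z\le x$. Now take any $y<x$ (so $x\not\le y$) and $z=y$: since $1\in I_\infty$, the basepoint coset $yI$ lies in $IyI/I\subset\overline{IxI/I}$ and in $I_\infty yI/I$, so the intersection is nonempty even though $x\not\le y$. Your planned contrapositive would in particular have to prove $IyI\cap I_\infty yI\cap G^+=\emptyset$, yet this set contains $y$ itself (recall $IyI\subset G^+$). The resolution is that the roles of $x$ and $y$ in the displayed formula are evidently transposed: in the single-affine specialization, where all of these objects are classical, Richardson's theorem gives $\overline{IyI/I}\cap I_\infty xI/I\neq\emptyset$ iff $x\le y$ --- the $I_\infty$-coset must carry the \emph{smaller} index. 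Your narrative that the positivity of $G^+$ produces the orientation $x\le y$ ``rather than the naive $y\le x$'' is exactly backwards on this point, and the trivial basepoint check above would have caught it. Any serious attack must first fix this normalization; after that, your forward step (refining $x\le y$ into reflection steps, whose chains are finite by Theorem~\ref{thm:compatibility-of-ell-with-order}, and transporting a point along the chain via root subgroups in $I_\infty$) is a sensible outline, though even the single covering step is not verified and membership in a prescribed cell of $G^+$ is precisely where the absence of a Birkhoff-type decomposition bites.
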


We can also drop the closure, and make the following group theoretic conjecture 
\begin{Conjecture}
  Let $x,y \in \WTits$. Then
  \begin{align}
    \label{eq:28}
     I x \cdot I/I \cap I_\infty y \cdot I/I \neq \emptyset 
  \end{align}
  if and only if $x \leq y$.
\end{Conjecture}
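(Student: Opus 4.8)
The plan is to rewrite the statement as a question about double cosets in $G=\bG(F)$. Since a coset $gI$ lies in $Ix\cdot I/I$ exactly when $g\in IxI$, and in $I_\infty y\cdot I/I$ exactly when $g\in I_\infty yI$, the intersection in question is nonempty if and only if $IxI\cap I_\infty yI\neq\emptyset$ inside $G$. The guiding model is the single-affine case, where $I$ and $I_\infty$ are genuinely opposite Iwahori subgroups: there the orbit $IxI/I$ meets the opposite orbit $I_\infty yI/I$ precisely when $x$ and $y$ are Bruhat-comparable, the intersection being the nonempty open Richardson variety whose dimension is controlled by $\ell(y)-\ell(x)$. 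I would try to reproduce this dichotomy, treating the two implications separately since they have very different characters.

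For the direction ``$x\leq y\Rightarrow$ nonempty'' I would argue by explicit construction. When $x=y$ the coset $xI$ already lies in both $IxI$ and $I_\infty xI$ (as $1\in I\cap I_\infty$), so the intersection is nonempty; the content is to move off this diagonal. Because $\ell$ is strictly compatible with the order (Theorem~\ref{thm:compatibility-of-ell-with-order}), every relation $x<y$ is refined by a finite saturated chain of covers, each of the form $z\lhd zs_{\beta[n]}$. It would then suffice to exhibit a representative of $IxI\cap I_\infty yI$ for a single cover and to run an induction on $\ell(y)-\ell(x)$. For a cover I would build such a representative from the affine root subgroup attached to $s_{\beta[n]}$, applying a one-parameter unipotent element to a diagonal point so as to land it in the opposite orbit while remaining in $IxI$; this mirrors the explicit reflection computations carried out in Cases~1--3 of the rank-one analysis above.

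The reverse direction ``nonempty $\Rightarrow x\leq y$'' is where I expect the real difficulty. In the single-affine setting this implication is proved using completeness of the flag variety together with the Borel fixed-point theorem: a nonempty intersection of (closures of) opposite cells must contain a torus-fixed point $uI$ with $u\in\WTits$, and then membership $uI\in IxI$ and $uI\in I_\infty yI$ is governed by explicit inequalities forcing $x\leq u\leq y$. My plan would be to replace this geometric input by a purely combinatorial one: show directly that $IxI\cap I_\infty yI$, when nonempty, must contain such a $\WTits$-representative $uI$, and then read off $x\leq u\leq y$ from the inversion-set descriptions of the two orbits --- precisely the $\Inv(\cdot)$ and $\Invplusplusx(s_{\beta[n]})$ bookkeeping developed in the preceding sections. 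Concretely, one would iteratively apply reflections $s_{\beta[n]}$ to peel off the leading $\pi$-adic term of a given representative, tracking how its position relative to $I$ and to $I_\infty$ changes, and argue that termination is possible only once comparability holds.

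The main obstacle is exactly this existence of a $\WTits$-representative inside a nonempty intersection: for $\bG$ of affine type, $G^+/I$ carries no established structure as an algebro-geometric object (Question~\ref{ques:define-double-affine-flag-var}), so neither completeness nor a usable fixed-point theorem is at hand, and one is forced to control the collision of two \emph{infinite-dimensional} double cosets by bare-hands group theory. The analysis is further complicated in the affine case by the commutation relations between the root groups of $I$, concentrated near $\pi=0$, and those of $I_\infty$, concentrated near $\pi=\infty$: these involve infinitely many imaginary-root directions with no single-affine counterpart. I expect that promoting the inversion-set invariants of the previous sections from invariants of the index $x$ to genuine invariants of points of $G^+/I$ is the missing ingredient, and that a successful such promotion would simultaneously clarify Question~\ref{ques:define-double-affine-flag-var}.
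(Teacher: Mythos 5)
This statement is a \emph{Conjecture} in the paper: the authors offer no proof of it, and indeed they highlight it precisely because a proof would yield a purely group-theoretic definition of the double-affine Bruhat order. So there is no paper proof to compare your attempt against; the only relevant question is whether your argument settles the conjecture, and it does not. What you have written is a research outline, and both implications have gaps that you yourself partially concede.

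Concretely: in the direction ``$x \leq y$ implies nonempty,'' your induction along a chain of generating relations does not compose. Knowing $IxI \cap I_\infty z I \neq \emptyset$ and $IzI \cap I_\infty y I \neq \emptyset$ for an intermediate $z$ gives no way to produce a point of $IxI \cap I_\infty y I$: in the finite and single-affine cases this composition step is exactly what Richardson-variety geometry (closure relations, irreducibility and properness of Schubert varieties) supplies, and none of that machinery exists here --- that absence is the content of Question \ref{ques:define-double-affine-flag-var}. Even your base case for a single relation $z < z s_{\beta[n]}$ is only gestured at: ``applying a one-parameter unipotent element to a diagonal point so as to land it in the opposite orbit while remaining in $IxI$'' is a computation that must actually be carried out, and membership in $I_\infty y I$ is not detected by the inversion-set bookkeeping of the earlier sections, which is an invariant of elements of $\WTits$, not of cosets in $G^+/I$. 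In the reverse direction you concede the gap outright: without completeness or a fixed-point theorem there is no mechanism forcing a nonempty intersection $IxI \cap I_\infty y I$ to contain a coset $uI$ with $u \in \WTits$, and the proposed ``peeling'' of $\pi$-adic leading terms is a hope rather than an argument --- the interaction between root subgroups concentrated at $\pi=0$ and those concentrated at $\pi=\infty$, which you yourself flag, is precisely where it would break. To be clear, this is not a defect peculiar to your write-up: the statement is open, and the two ingredients your plan lacks (explicit representatives witnessing each generating relation, and a $\WTits$-point inside any nonempty intersection) are exactly the obstructions that keep it a conjecture.
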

A positive answer to this conjecture would give a purely group-theoretic definition of the Bruhat order without having to discuss closures.

\begin{Question}
Let $x \leq y$. Give the transverse slice 
$\overline{I x \cdot I/I} \cap I_\infty y \cdot I/I$ the structure of a finite-type affine scheme. 
\end{Question}

Following the situation in the single-affine case, we expect the transverse slice $\overline{I x \cdot I/I} \cap I_\infty y \cdot I/I$ to have dimension $\ell(y) - \ell(x)$. Unfortunately, we do not how to currently make that precise. However, we can make the following precise conjecture.

\begin{Conjecture}
 Let $x \leq y$. Then there exists a polynomial $R_{x,y} \in \ZZ[v]$ of degree $\ell(y) - \ell(x)$ independent of $k$ such that  
 \begin{align}
   \label{eq:29}
        \# (I x \cdot I/I \cap I_\infty y \cdot I/I ) = R_{x,y}(q)
 \end{align}
 where $q$ is the cardinality of $k$.
\end{Conjecture}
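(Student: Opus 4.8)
The plan is to model the count $\#(Ix\cdot I/I \cap I_\infty y\cdot I/I)$, equivalently $\#((IxI \cap I_\infty yI)/I)$, on the classical theory of $R$-polynomials, deriving it from a recursion graded by the length function $\ell$. Since there is no geometric model for $G^+/I$, I would work purely with $k$-points inside the Braverman--Kazhdan--Patnaik Iwahori--Hecke algebra $\cH$ of $I$-biinvariant functions supported on $G^+$. The first step is to fix, for $x<y$, a $\bG$-affine reflection $r=s_{\beta[n]}$ with $yr<y$ and $yr\in\WTits$; such an $r$ exists because $\ell$ is strictly compatible with the order (Theorem \ref{thm:compatibility-of-ell-with-order}). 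One then seeks to relate the count for the pair $(x,y)$ to the counts for $(x,yr)$ and, when $xr<x$, for $(xr,yr)$, by analyzing how the double coset $I_\infty yI$ decomposes upon right multiplication by $r$.

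The engine of the argument is Theorem \ref{thm-length-formula-for-sbeta-inversion-set}. In the Coxeter case a descent $w\mapsto sw$ always lowers length by exactly $1$, giving the familiar dichotomy $R_{v,w}=R_{sv,sw}$ or $R_{v,w}=qR_{sv,sw}+(q-1)R_{v,sw}$; here the drop $\ell(y)-\ell(yr)=\#\Inv^{++}_{yr}(r)$ may exceed $1$, so the reflection should contribute a factor $q^{\#\Inv^{++}_{yr}(r)}$ to the leading term of the recursion. Granting such a recursion, the degree statement $\deg R_{x,y}=\ell(y)-\ell(x)$ follows by induction on $\ell(y)-\ell(x)$: the base case $x=y$ gives $IxI\cap I_\infty xI=xI$, a single $I$-coset, so $R_{x,x}=1$ has degree $0$; and at each descent the degree rises by exactly the length drop $\#\Inv^{++}$, which telescopes to $\ell(y)-\ell(x)$ precisely because of the length formula. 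Independence of $k$ should be transparent, since all structure constants of $\cH$ are universal polynomials in $q=\#k$.

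The main obstacle is that $\WTits$ carries no Coxeter structure and $G^+$ is only a semigroup, so the classical inputs used to \emph{produce} such a recursion are unavailable: the reflections $s_{\beta[n]}$ are not simple, they satisfy no quadratic relation in $\cH$, and $\cH$ is not generated as an algebra by invertible Iwahori generators but is merely a completion. Consequently one cannot simply invert $T_x$ to read off $R_{x,y}$, as one does classically. I expect the genuinely hard part to be establishing the recursion by a direct parametrization of the points of $(IxI\cap I_\infty yI)/I$, for instance by galleries or by the combinatorial difference of inversion sets governed by the injections $\phi$ and $\psi$ of Section \ref{SS:double-affine-maps}, and then checking that this parametrization is compatible with right multiplication by $r$. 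A secondary but essential point is finiteness: because $\cH$ involves sums that are \emph{a priori} infinite, one must show the relevant convolutions are finite before the coefficients can be called honest polynomials, and I expect this to follow from the strict compatibility of $\ell$ together with the finiteness of $\Inv^{++}_{\pi^\mu w}(s_{\beta[n]})$ proved above. It is presumably the combination of these difficulties, chiefly the absence of any scheme structure on $G^+/I$ guaranteeing polynomial point counts, that leads the authors to leave the statement as a conjecture.
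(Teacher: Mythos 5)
First, a point of calibration: the paper does not prove this statement at all --- it appears in Section \ref{sec:further-questions} as an open conjecture, so there is no proof of record to compare yours against, and your closing paragraph correctly senses this. That said, your proposal is a program rather than a proof, and the gap is not peripheral: the pivotal step is introduced with ``granting such a recursion,'' but producing a Kazhdan--Lusztig--Deodhar-type recursion for these intersections \emph{is} essentially the conjecture, and nothing you invoke supplies it. Worse, the engine you propose cannot power it. The Braverman--Kazhdan--Patnaik algebra $\cH$ consists of $I$-biinvariant functions on $G^+$, so its structure constants only encode convolutions of $I$-double cosets; the set to be counted involves $I_\infty \subset \bG(k[\pi^{-1}])$, which $\cH$ does not see at all. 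In the classical theory, the bridge between $T_{w}^{-1}$ and the counts $\#(BxB/B \cap B^-yB/B)$ rests on the quadratic relation together with the refined Bruhat/Birkhoff decompositions coming from the BN-pair structure for $(B,B^-)$; you have yourself noted the quadratic relation fails here, and there is no BN-pair axiomatics relating $I$ and $I_\infty$ inside the semigroup $G^+$, so no amount of Hecke-algebra manipulation will produce the $I_\infty$-side of the count. Hence ``independence of $k$ should be transparent, since all structure constants of $\cH$ are universal polynomials in $q$'' does not bear on the quantity in question.

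Two further concrete gaps. (a) Finiteness of $\#(Ix\cdot I/I \cap I_\infty y\cdot I/I)$ is never addressed: a priori this intersection could be infinite, and the finiteness results you cite --- strict compatibility of $\ell$ (Theorem \ref{thm:compatibility-of-ell-with-order}) and finiteness of $\Inv^{++}_{\pi^\mu w}(s_{\beta[n]})$ (Theorem \ref{thm-length-formula-for-sbeta-inversion-set}) --- are statements about the length function and the order, not about orbit intersections; the mere finiteness of the count is already part of what must be proved. (b) Your base case $IxI \cap I_\infty xI = xI$ is asserted, not proved. In the finite and single-affine settings it follows from transversality of opposite Schubert cells at the torus-fixed point, which again rests on the Birkhoff decomposition; for $G^+$ with the ad hoc subgroup $I_\infty$ even nonemptiness and this uniqueness are open --- indeed your base case is precisely the $x=y$ instance of the paper's preceding conjecture that $IxI/I \cap I_\infty yI/I \neq \emptyset$ if and only if $x \leq y$. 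Finally, the degree bookkeeping in your induction quietly assumes that a descent by $r$ with length drop $\#\Inv^{++}_{yr}(r) > 1$ contributes exactly that power of $q$; in the Coxeter case the recursion proceeds by steps of length exactly one, and replacing this by multi-step drops would require control of chains and covers, i.e.\ Conjecture \ref{conj:covers}, which the paper establishes only in untwisted affine ADE type. In short, your diagnosis of the obstacles is accurate, but every load-bearing step of the argument is among the things conjectured rather than known.
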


A positive answer to this would give a purely group-theoretic definition of the length function.

\subsection{$2$-dimensional phenomena}

Because an untwisted affine Kac-Moody group is itself constructed as a central extension of a loop group of a finite-type group, the $p$-adic group $G$ is a sort of double loop group. However, the two loops play very different roles in the discussion above. So a natural question is to understand $G^+/I$ from a purely $2$-dimensional point of view where the two loops play symmetric roles.

In the single-affine case, the loop group perspective gives rise to a well-understood relationship between the affine flag variety and spaces of bundles on an algebraic curve. Therefore, in the double-affine case, we expect there should be a relationship with bundles on an algebraic surface.

\begin{Question}
 Describe $G^+/I$ and/or the transverse slices  $I x \cdot I/I \cap I_\infty y \cdot I/I$ in terms of bundles on an algebraic surface. 
\end{Question}

If one considers the \emph{double-affine Grassmannian} $G^+/K$ instead of $G^+/I$, a candidate definition for transverse slices to $K$-orbit closures is given in terms of bundles on an algebraic surface by Braverman and Finkelberg in \cite{BF}. Even in that case, however, a precise bijection with the group-theoretic slice is unknown.

On a combinatorial level, this double loop phenemonena manifests itself in the fact that $\WTits$ contains two copies of the coroot lattice of finite-type group. The first copy lies in $W$, and the second copy arises because the Tits cone $\cT$ roughly looks like the coroot lattice times the semi-group of natural numbers. Therefore $2$-dimensional phenonemena from this point of view would be any non-trivial symmetry arising from interchanging these two lattices.

\subsection{Some combinatorial questions}

% \subsubsection{Poincar\'e series}

% Now that we have an integer-valued length function on $\WTits$, a natural question is to develop a theory of Poincar\'e series. However, na\"ive  Poincar\'e series
% \begin{align}
% \sum_{x\in \WTits} q^{\ell(x)}.
% \end{align}
% does not converge because there are clearly infinitely many $x$ with the same length.
% So the imprecise question is to develop a notion Poincar\'e series that does converge but still captures some non-trivial information about $\WTits$.

\subsubsection{Deodhar's inequality}

Recall that $\tDelta^+$ denotes the set of positive double-affine real roots. 
\begin{Conjecture}
Suppose $x,y,z \in \WTits$ with $x \leq  y \leq z$. Then we have the following inequality:
\begin{align}
  \label{eq:30}
 \# \{  \beta[n] \in \tDelta^+ \suchthat x \leq y s_{\beta[n]} \leq z \} \geq \ell(z) - \ell(x). 
\end{align}
\end{Conjecture}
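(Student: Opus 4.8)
The plan is to reduce the two-sided count to two one-sided (``endpoint'') bounds and attack each by induction on length. Fix $x\le y\le z$. For a positive double-affine root $\beta[n]$ with $ys_{\beta[n]}\in\WTits$, the elements $y$ and $ys_{\beta[n]}$ are comparable by the definition of the order, and since $ys_{\beta[n]}\neq y$ exactly one of $ys_{\beta[n]}>y$ or $ys_{\beta[n]}<y$ holds. If $ys_{\beta[n]}>y$ then $x\le ys_{\beta[n]}$ is automatic and the only constraint is $ys_{\beta[n]}\le z$; if $ys_{\beta[n]}<y$ then $ys_{\beta[n]}\le z$ is automatic and the only constraint is $x\le ys_{\beta[n]}$. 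Writing $U(y)=\#\{\beta[n]: y<ys_{\beta[n]}\le z\}$ and $D(y)=\#\{\beta[n]: x\le ys_{\beta[n]}<y\}$, these two sets are disjoint and together exhaust the set counted in \eqref{eq:30}, so that count equals $U(y)+D(y)$. It therefore suffices to prove the endpoint inequalities
\[
D(y)\ge \ell(y)-\ell(x)\qquad\text{and}\qquad U(y)\ge \ell(z)-\ell(y),
\]
since summing them gives $U(y)+D(y)\ge \ell(z)-\ell(x)$.

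I would prove $U(y)\ge \ell(z)-\ell(y)$ (the bound on $D(y)$ being entirely analogous, descending from $y$ toward $x$) by induction on the nonnegative integer $\ell(z)-\ell(y)$, which is well-defined by Theorem~\ref{thm:compatibility-of-ell-with-order}. The case $\ell(z)=\ell(y)$ forces $y=z$ and is trivial. If $y<z$, then since every interval of $\WTits$ is finite by Theorem~\ref{thm:compatibility-of-ell-with-order}, the set $\{w:y<w\le z\}$ has a minimal element $y_1$, which covers $y$; by the necessary condition for a covering relation, $y_1=ys_{\beta_0[n_0]}$ for some double-affine root $\beta_0[n_0]$. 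Working in untwisted affine ADE type, Theorem~\ref{thm:ade-covering} gives $\ell(y_1)=\ell(y)+1$, so the induction hypothesis yields $\#\{\beta[n]: y_1<y_1 s_{\beta[n]}\le z\}\ge \ell(z)-\ell(y)-1$. To close the induction I would construct an injection from these ``up-reflections through $y_1$'' into the up-reflections through $y$ whose image omits $\beta_0[n_0]$, producing $U(y)\ge(\ell(z)-\ell(y)-1)+1$. Note that the use of Theorem~\ref{thm:ade-covering} to keep the length drop equal to $1$ is what makes the induction clean, so ADE type is the natural setting.

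The main obstacle is precisely the construction of this injection. In a Coxeter group it would be supplied by the lifting property and the exchange condition, but as the paper stresses these tools are unavailable for $\WTits$. My proposal is to build it by hand from the inversion-set machinery of Section~\ref{SS:double-affine-maps}. The natural candidate reflection through $y$ attached to an up-reflection $s_{\beta[n]}$ through $y_1$ is the conjugate $s_{\beta[n]}':=s_{\beta_0[n_0]}s_{\beta[n]}s_{\beta_0[n_0]}$, for which $y\, s_{\beta[n]}' = y_1 s_{\beta[n]} s_{\beta_0[n_0]}$. Since $y_1 s_{\beta[n]}\le z$ is known, one must control the effect of the trailing factor $s_{\beta_0[n_0]}$ and verify both $y<y\,s_{\beta[n]}'\le z$ and injectivity; this is exactly the step where the lifting property would otherwise intervene. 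I expect this to be genuinely hard in general, and I would track the change of inversion sets using Corollary~\ref{cor-phi-psi-cover-everything} together with the explicit description \eqref{eq:46}. The most promising route is to exploit the ADE pairing bound of Lemma~\ref{lem:pairings-in-finite-ade}, which forces $\langle\beta,\theta\rangle\in\{-1,0,1\}$, and to upgrade the explicit three-term chains produced in Propositions~\ref{prop:not-cover} and~\ref{prop:sl-two-cover-classification}: those constructions already perform exactly the reflection-substitution moves needed to witness comparabilities, and reassembling them into the required injection would establish the conjecture in untwisted affine ADE type.
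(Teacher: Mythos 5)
First, note that the paper itself does not prove this statement: it appears in the final section as an open conjecture (the double-affine analogue of Deodhar's inequality), explicitly left as a problem whose solution the authors hope will illuminate the geometry of transverse slices. So your proposal cannot be measured against a proof of record; it has to stand on its own, and it does not yet. What is correct: the decomposition of the count in \eqref{eq:30} as $U(y)+D(y)$ (comparability of $y$ and $ys_{\beta[n]}$ follows from the generating relations of the order whenever $ys_{\beta[n]}\in\WTits$, and transitivity makes one of the two constraints automatic); the existence of a minimal element $y_1$ of $\{w \suchthat y<w\le z\}$, which is then a cover $y\lhd y_1=ys_{\beta_0[n_0]}$, using boundedness of chains from Theorem~\ref{thm:compatibility-of-ell-with-order}; and the use of Theorem~\ref{thm:ade-covering} to get $\ell(y_1)=\ell(y)+1$ (note this silently restricts the conjecture, which is stated for arbitrary $\WTits$, to untwisted affine ADE type).

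The genuine gap is that the entire mathematical content is concentrated in the injection from $\{\beta[n] \suchthat y_1<y_1s_{\beta[n]}\le z\}$ into $\{\beta[n] \suchthat y<ys_{\beta[n]}\le z\}\setminus\{\beta_0[n_0]\}$, and this step is not carried out; it is precisely the double-affine substitute for the Coxeter lifting property, i.e., essentially the original problem. Your candidate map, $s_{\beta[n]}\mapsto s'=s_{\beta_0[n_0]}s_{\beta[n]}s_{\beta_0[n_0]}$, comes with none of the three properties it needs: (i) $ys'=y_1s_{\beta[n]}s_{\beta_0[n_0]}$ must lie in $\WTits$ at all, which is a nontrivial condition on the translation part since $\WTits$ is only a semigroup and elements outside it are not in the poset; (ii) $y<ys'$; and (iii) $ys'\le z$. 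Knowing $y_1s_{\beta[n]}\le z$ gives no control over $y_1s_{\beta[n]}s_{\beta_0[n_0]}$, because right multiplication by a reflection can move an element arbitrarily far up or down in the order, and in $\WTits$ there are no simple reflections or descent sets with which to run a lifting argument --- in Coxeter groups the inductive proof of the one-sided bound pivots on a simple reflection $s$ with $zs<z$, an object with no analogue here, and even there the argument does not proceed via the conjugation-monotonicity you posit. The tools you cite do not fill this hole: the inversion-set machinery of Section~\ref{SS:double-affine-maps} and formula \eqref{eq:46} compute the length change along a \emph{single} reflection applied to a \emph{single} element, and Lemma~\ref{lem:pairings-in-finite-ade} together with Propositions~\ref{prop:not-cover} and \ref{prop:sl-two-cover-classification} produces chains interpolating between $x$ and $xs_{\beta[r,n]}$ \emph{only}; nothing in them compares $ys'$ with the third element $z$, which is what (iii) requires. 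So what you have is a plausible reduction plus an unproved key lemma, not a proof.
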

In finite and single-affine cases, the above inequality is a conjecture of Deodhar that has since been proved by many authors. Although the statement is purely combinatorial, many of the proofs are intimately related to singularities of Schubert varieties and transverse slices. In our double-affine situation, we hope that a proof of this conjecture will shed some light on the geometry of transverse slices.

\subsubsection{Generalizing Coxeter group theory}

The theory of Coxeter groups and Bruhat orders is very rich. Although we are slowly developing analogues of many results for $\WTits$ and its Bruhat order, there are still many Coxeter-theoretic results that have not yet been generalized (see the book by Bj\"{o}rner and Brenti \cite{BB} for a nice exposition of many of these results). Below we list some problems that we think would be useful generalizations.
\begin{itemize}
\item Develop an analogue of reduced expressions and the subword criterion for the Bruhat order.
\item Develop weak order.
\item Develop a theory of Poincar\'e series.
\item Develop a notion of parabolic sub-semigroups.
\item Prove shellability results.
\item Classify short intervals. 
\end{itemize}

\end{document}